\documentclass[12pt,reqno]{amsart}
\usepackage{amsfonts}
\usepackage{amssymb,color}
\usepackage{amssymb}

\textwidth=15cm \textheight=23cm \oddsidemargin=0.5cm
\evensidemargin=0.5cm

\def\half#1{\frac{#1}{2}}

\def\norm#1{\|#1\|}
\def\normo#1{\left\|#1\right\|}
\newcommand{\les}{{\lesssim}}

\newcommand{\T}{\mathbb{T}}

\newcommand{\N}{\mathbb{N}}
\newcommand{\R}{\mathbb{R}}
\newcommand{\Z}{\mathbb{Z}}

\newcommand{\gau}{\mathcal{G}}

\newcommand{\ft}{{\mathcal{F}}}
\newcommand{\Hl}{{\mathcal{H}}}
\newcommand{\sgn}{{\mbox{sgn}}}

\setlength{\marginparwidth}{2cm}

\newcommand{\Del}[1]{}

\numberwithin{equation}{section}

\newtheorem{thm}{Theorem}[section]

\theoremstyle{remark}

\newtheorem{theorem}{Theorem}[section]
  \newtheorem{proposition}[theorem]{Proposition}
  \newtheorem{lemma}[theorem]{Lemma}

\theoremstyle{remark}

\theoremstyle{definition}

\begin{document}

\title[Periodic mBO equation]{Well-posedness in energy space for the periodic modified Benjamin-Ono equation}
\author[Z. Guo, Y. Lin, L. Molinet]{Zihua Guo$^{1,2}$, Yiquan Lin$^{1,2}$, Luc Molinet$^3$}

\address{$^1$LMAM, School of Mathematical Sciences, Peking
University, Beijing 100871, China}
\address{$^2$Beijing International Center for Mathematical Research, Beijing
100871, China} \email{zihuaguo@math.pku.edu.cn,
yqlin.math@gmail.com}

\address{$^3$Universit\'{e} Francois Rabelais Tours, F\'{e}d\'{e}ration Denis Poisson-CNRS, Parc
Grand-mont, 37200 Tours, France}

\email{luc.molinet@lmpt.univ-tours.fr}

\begin{abstract}
We prove that the periodic modified Benjamin-Ono equation is
locally well-posed in the energy space $H^{1/2}$. This ensures the global well-posedness in the defocusing case.
The proof is
based on an  $X^{s,b}$ analysis of the
system after gauge transform.
\end{abstract}

\maketitle

\tableofcontents

\section{Introduction, main results and notations}

In this paper, we study the Cauchy problem for the
modified Benjamin-Ono equation on the torus that reads\begin{align}\label{eq:mBO}
\begin{cases}
\partial_t u+\Hl \partial_x^2 u=\mp u^2u_x,\\
u(x,0)=u_0
\end{cases}
\end{align}
where $u(t,x):\R\times\T \to \R$, $\T=\R/{2\pi \Z}$ and $\Hl$ is the
Hilbert transform
\[\widehat{\Hl f}(0)=0, \quad \widehat{\Hl f}(k)=-i\sgn(k)\hat{f}(k),\ k\in \Z^*.\]
This equation is called defocusing when there is a minus sign in
front of the nonlinear term $ u^2 u_x $ and focusing when it is a
plus sign. The Benjamin-Ono equation with the quadratic nonlinear
term
\begin{align}\label{eq:BO}
\partial_t u+\Hl \partial_x^2 u=&uu_x
\end{align}
was derived by Benjamin \cite{Ben} and Ono \cite{Ono} as a model for
one-dimensional waves in deep water. On the other hand, the cubic
nonlinearity is also of much interest for long wave models
\cite{ABFS,KPV}.

There are at least the three    following quantities preserved
under the flow of the real-valued mBO equation \eqref{eq:mBO}\footnote{In \eqref{eq:H1half} the $+ $ corresponds to the defocusing case whereas the $-$
 corresponds the the focusing one.}
\begin{eqnarray}\label{eq:conservation}
\int_\T u(t,x)dx& = & \int_\T u_0(x)dx, \\
\int_\T u(t,x)^2dx& =& \int_\T u_0(x)^2dx,\label{eq:L2con}\\
\int_\T \frac{1}{2}u\Hl u_x\mp \frac{1}{12}
|u(t,x)|^4dx& =& \int_\T \frac{1}{2}u_0\Hl u_{0,x}\mp \frac{1}{12}
|u_0(x)|^4dx\; .\label{eq:H1half}
\end{eqnarray}
These conservation laws provide a priori bounds on the solution. For
instance, in the defocusing case we get from \eqref{eq:L2con} and
\eqref{eq:H1half} that the $H^{1/2}$ norm of the solution remains
bounded for all  times if the initial data belongs to $H^{1/2}$.
This is crucial in order to prove the well-posedness result. On the
other hand the mBO equation is $ L^2 $-critical (in the sense that
the $ L^2(\R) $-norm  is preserved by the dilation symmetry of the
equation).  Therefore, in the focusing case, one expects that a
phenomenon  of blow-up in the energy space occurs\footnote{Progress
in this direction can be found in \cite{KMR} for the case on the
real line.}.

The Cauchy problems for \eqref{eq:mBO} and the Benjamin-Ono equation
\eqref{eq:BO} have been extensively studied. For instance, in both
real-line and periodic case, the energy method provides local
well-posedness for BO and mBO in $H^s$ for $s>3/2$ \cite{Iorio}. In
the real-line case, this result was improved by combination of
energy method and the dispersive effects. For real-line BO equation,
the result $s\geq 3/2$ by Ponce \cite{Ponce} was the first place of
such combination as a consequence of the commutator estimates in
\cite{KatoPonce}, was later improved to $s>5/4$ in \cite{KT}, and
$s>9/8$ in \cite{KK}. Tao \cite{Tao} obtained global well-posedness
in $H^s$ for $s\geq 1$ by using a gauge transformation as for the
derivative Schr\"odinger equation and Strichartz estimates. This
result was improved to $s\geq 0$ by Ionescu and Kenig \cite{IK}, and
to $s\geq 1/4$ (local well-posedness) by Burq and Planchon
\cite{BP}. Their proof both used the Fourier restriction norm
introduced in \cite{Bour}. Recently, Molinet and Pilod \cite{MP}
gave a simplified proof for $s\geq 0$ and obtained unconditional
uniqueness for $s>1/4$.

For the real-line mBO, this was improved to $s\geq 1$ by
Kenig-Koenig \cite{KK} by the enhanced energy methods. Molinet and
Ribaud \cite{MR2} obtained analytic local well-posedness for the
complex-valued mBO in $H^s$ for $s>1/2$ and $B_{2,1}^{1/2}$ with a
small $L^2$ norm, improving the result of Kenig-Ponce-Vega
\cite{KPV2} for $s>1$. The smallness condition of $H^s (s>1/2)$
results was later removed in \cite{MR} by using Tao's gauge
transformation \cite{Tao}. The result for $s=1/2$ was obtained by
Kenig and Takaoka \cite{KenigT} by using frequency dyadically
localized gauge transformation. Their result is sharp in the sense
that the solution map is not locally uniformly continuous in $H^s$
for $s<1/2$ (The failure of $C^3$ smoothness was obtained in
\cite{MR2}). Later, Guo \cite{Guo} obtained the same result without
using gauge transform under a smallness condition on the $L^2$ norm.

In the periodic case, there is no smoothing effect for the equation.
However, to overcome the loss of derivative, the gauge transform
still applies. For the periodic BO equation, global well-posedness
in $H^1$ was proved by Molinet and Ribaud \cite{MRmBO}, was later
improved by Molinet to $H^{1/2}$ \cite{Mol2}, and $L^2$ \cite{Mol1}.
Molinet \cite{Mol3} also proved that the result in $L^2$ is sharp in
the sense that the solution map fails to be continuous below $L^2$.
For the periodic mBO \eqref{eq:mBO}, local well-posedness in $H^1$
was proved in \cite{MRmBO}. Their proof used the Strichartz norm and
gauge transform.

The purpose of this paper is to improve the well-posedness results
for \eqref{eq:mBO} to the energy space $H^{1/2}$ and, as a by-product, to prove that the solutions can be extended for all times in the defocusing case.
 The main result of
this paper is
\begin{thm}\label{thm}
Let $s\geq 1/2$. For any intial data $ u_0\in H^s(\T) $  there exists $ T= T(\|u_0\|_{H^{1/2}} )>0$ such that the mBO equation \eqref{eq:mBO}
 admits a unique solution $$u\in C([-T,T];H^s(\T) )\quad \mbox{Êwith }Ê\quad  P_+(e^{iF(u)}) \in X^{\frac{1}{2},\frac{1}{2}}_T \; . $$
Moreover, the solution-map $ u_0 \mapsto u $ is continuous from the ball of $ H^{1/2}(\T) $ of radius $\|u_0\|_{H^{1/2}}$, equipped with the $ H^s(\T) $-topology,  with values in  $ C([-T,T];H^s(\T)) $. \\
Finally, in the defocusing case, the solution can be extended for all times and belongs to $ C(\R;H^s(\T))\cap C_b(\R;H^{1/2}(\T))$.
\end{thm}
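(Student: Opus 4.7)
The plan is to follow the gauge-transformation strategy introduced by Tao for BO and adapted by Kenig and Takaoka for mBO on the real line, now carried out in the periodic setting where no dispersive smoothing is available. Since $\mp u^2 u_x$ loses one derivative and on $\T$ there is no bilinear smoothing, a direct Picard iteration in $X^{s,b}$ cannot work at $s=\frac12$. To remove the worst high--low interaction I would introduce a primitive $F(u)$ of (a suitable multiple of) the mean-zero part of $u^2$ and work with the gauge variable $w = P_+(e^{iF(u)})$. Differentiating $w$ in time and applying $\Hl \p_x^2$, the factor $i(\p_x F)e^{iF(u)}$ cancels against the dangerous piece of the equation, so that the remaining nonlinearity in $w$ contains only terms where either both factors of $u$ participate in a high--high interaction, or the outer $P_+$ projection combined with the dispersion forces a non-degenerate resonance function. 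Both situations admit the extra smoothing needed at $s=\frac12$.

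The core technical work is then a multilinear analysis in Bourgain spaces on the torus. First, prove the standard linear and Duhamel estimates in $X^{\frac12,\frac12}_T$. Second, establish sharp $L^4_{t,x}$-Strichartz estimates on $\R\times\T$ for the Benjamin--Ono group, together with the bilinear and trilinear refinements compatible with the arithmetic structure of the resonance relation. Third, bound each piece of the gauged nonlinearity (which after expansion becomes quintic and higher) in $X^{\frac12,-\frac12+\e}$ by a polynomial expression in $\|w\|_{X^{\frac12,\frac12}_T}$ and $\|u\|_{L^\infty_T H^{1/2}}$. A standard contraction argument on a ball whose radius depends only on $\|u_0\|_{H^{1/2}}$ then yields $w\in X^{\frac12,\frac12}_T$ on $[-T,T]$ with $T=T(\|u_0\|_{H^{1/2}})$.

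To recover $u$ and prove continuity of the flow, I would invert the gauge: from $w$, together with the information that $u$ is real and that the zero mode of $u$ is conserved, one reconstructs $u$ uniquely. For higher regularity $s\geq\frac12$, bootstrap from the $H^{1/2}$ solution: any extra derivative lands on a high-frequency factor which is already controlled by the same multilinear estimates. Uniqueness in the claimed class and Lipschitz continuity of $u_0\mapsto u$ with respect to the $H^{1/2}$-topology then follow from difference estimates at the level of the gauged variable combined with stability of the gauge map. The step I expect to be the main obstacle is the sharpness at $s=\frac12$: periodic $L^4$-Strichartz has only a logarithmic loss, and the bilinear smoothing available on the real line is absent on $\T$, so the multilinear bounds must carefully exploit the $P_+$ projection, the structural cancellation built into the gauge, and number-theoretic lower bounds on the resonance function of the Benjamin--Ono symbol.

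Finally, global existence in the defocusing case will be immediate from the conservation laws: since $\int_\T u\,\Hl u_x = \|u\|_{\dot H^{1/2}}^2 \geq 0$ and the $|u|^4$ term in \eqref{eq:H1half} enters with a positive sign, one obtains $\|u(t)\|_{H^{1/2}}^2 \leq \|u_0\|_{L^2}^2 + 2E(u_0)$ uniformly in $t$, where $E$ denotes the conserved Hamiltonian. The local existence time $T(\|u_0\|_{H^{1/2}})$ can therefore be iterated indefinitely, and persistence in $H^s$ for $s>\frac12$ follows by applying continuity of the flow at each step.
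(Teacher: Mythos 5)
Your overall template (gauge transform to remove the high--low interaction, multilinear estimates in Bourgain spaces, higher regularity by a tame bootstrap, conservation laws for the defocusing global result) is the one the paper follows, and your argument for global existence is exactly theirs. But there is a genuine gap at the heart of the construction. You propose to close a \emph{standard contraction argument} for the gauged variable in $X^{1/2,1/2}_T$, with $u$ entering only through $\|u\|_{L^\infty_T H^{1/2}}$. For mBO the gauged equation does not close in the gauge variable alone: it is a coupled system in $(u,v)$, containing terms such as $P_+\bigl(\partial_x^{-1}P_+(e^{-iF}u^2)\,\partial_x P_-(e^{-iF}\overline{v})\bigr)$ and $P_+\bigl(e^{-iF}u\,B(u,u)\bigr)$, and --- this is the obstruction the paper explicitly isolates --- the gauge cannot be inverted in Bourgain spaces, so $u$ does not inherit $X^{1/2,1/2}$ regularity from $v$. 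There is therefore no single space in which to run a fixed point. The paper's resolution is to measure $u$ and $v$ in different spaces ($u\in C_TH^s\cap\widetilde{L^4_TH^s_4}$, $v\in Y^s_T$), to prove a priori estimates for \emph{smooth} solutions supplied by the known $H^1$ theory of Molinet--Ribaud (with a frequency splitting: low frequencies of $u$ estimated directly from the equation, high frequencies through the relation $P_+u=e^{iF}v-e^{iF}R(u)$ and the commutator gain in $R(u)$), and then to obtain existence, uniqueness in the stated class, and continuity of the flow by showing the approximating sequence is Cauchy via difference estimates --- not by contraction. Your proposal never explains how $\|u\|_{L^\infty_TH^{1/2}}$ itself is closed in terms of the data and the gauge variable, which is where the real work lies.

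Two further points. First, as written your gauge variable is $P_+(e^{iF(u)})$, the exponential alone (this is a typo inherited from the theorem statement): since $F_x=P_{\ne c}(u^2)$, knowledge of $e^{iF}$ determines at best $u^2$ up to its mean, so the claimed reconstruction of $u$ from $w$ cannot work. The working gauge is $v=P_+(e^{-iF}u)$, and even then the inversion is only implicit because it involves $u$ through $F$ and $R(u)$. Second, the technical engine you invoke --- bilinear/trilinear Strichartz refinements and number-theoretic lower bounds on the resonance function --- is not what is needed here; the paper gets by with Bourgain's $L^4$ estimate, the resonance bound $|\sigma_{\max}|\gtrsim N N_5$, and, as its main new ingredient, the Marcinkiewicz multiplier theorem to justify multiplying elements of $X^{s,b}$ by (suitable extensions of) $u$ and $e^{-iF}$, which live only in Strichartz-type spaces.
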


A very similar equation to mBO \eqref{eq:mBO} is the derivative
nonlinear Schr\"odinger equation
\begin{align}\label{eq:dNLS}
\begin{cases}
i\partial_t u+\partial_x^2 u=i(|u|^2u)_x,\quad (t,x)\in \R\times \T\\
u(0,x)=u_0.
\end{cases}
\end{align}
It has also attracted extensive attention. Local well-posedness for
\eqref{eq:dNLS} in $H^{1/2}$ was proved by Herr \cite{Herr}. There
are several differences between \eqref{eq:mBO} and \eqref{eq:dNLS}.
The first one is the integrability: \eqref{eq:dNLS} is integrable
while \eqref{eq:mBO} is not. The second one is the conservation
laws: \eqref{eq:mBO} has a conservation law at level $H^{1/2}$, and
hence GWP in $H^{1/2}$ is much easier. The last one is the action of
the gauge transform: let $v$ be the function after gauge transform,
\eqref{eq:dNLS} can be reduced to a clean equation which involves
only $v$, while \eqref{eq:mBO} can only reduce to a system that involves
both $u$ and $v$, and hence the gauge for \eqref{eq:mBO} brings more
technical difficulties.

We discuss now the ingredients in the proof of Theorem \ref{thm}.
Let $u$ be a smooth
solution to \eqref{eq:mBO}, define
\begin{align}
w=T(u):=\frac{1}{\sqrt{2}}u(t,x-\int_0^t \frac{1}{2\pi}\int
u^2(s,x)dxds).
\end{align}
Then $w$ solves the "Wicked order" mBO equation:
\begin{align}\label{eq:mBO2}
\begin{cases}
\partial_t u+\Hl \partial_x^2 u=2P_{\ne c}(u^2)u_x,\\
u(x,0)=u_0,
\end{cases}
\end{align}
where $P_{\ne c}f=f-\frac{1}{2\pi}\int_{\T} fdx$. It is easy to see
that $T$ and its inverse $T^{-1}$ are both continuous maps from
$C((-T,T):H^s)$ to $C((-T,T):H^s)$ for $s\geq 0$. Therefore we will  consider \eqref{eq:mBO2} instead of  \eqref{eq:mBO}.
Now, in
order to overcome the loss of derivative, we will apply the method
of gauge transform as in \cite{MRmBO, Mol1, Mol2}, which was first
developed for BO equation by Tao \cite{Tao}. As noticed above the equation satisfied by this  gauge transform $ v$
 involves terms with both $ u $ and $ v $.
 One of the main difficulties is that the solution $ u$ does not share the same regularity in
 Bourgain'space as the gauge transform $v$.
The main new ingredient is the  use of the Marcinkiewicz
 multiplier theorem that enables us to treat the multiplication by $u $  in Bourgain'space in a simple way.
\subsection{Notations}
For $A,B>0$, $A\lesssim B$ means that there exists $c>0$ such that
$A\leq cB$. When $c$ is a small constant we use $A\ll B$. We write
$A\sim B$ to denote the statement that $A\lesssim B\lesssim A$.

We denote the sum on $\Z$ by integral form $\int
a(\xi)d\xi:=\sum_{\xi\in\Z}a(\xi)$. For a $2\pi$-periodic function
$\phi$, we define its Fourier transform on $\Z$ by
\begin{align*}
\hat{\phi}(\xi):=\int_{\R/2\pi\Z}e^{-i\xi x}\phi(x)dx,\ \forall\
\xi\in \Z.
\end{align*}
We denote by $W(\cdot)$ the unitary group
$W(t)u_0:=\ft_x^{-1}e^{-it|\xi|\xi}\mathcal{F}_xu_0(\xi)$.

For a function $u(t,x)$ on $\R\times\R/(2\pi)\Z$, we define its
space-time Fourier transform as follows, $\forall\ (\tau,\xi)\in
\R\times \Z$
\begin{align*}
\hat{u}(\tau,\xi):=\mathcal{F}_{t,x}(u)(\tau,\xi):=\mathcal{F}(u)(\tau,\xi)
=\int_{\R}\int_{\R/(2\pi)\Z}e^{-i(\tau t+\xi x)}u(t,x)dxdt.
\end{align*}
Then define the Sobolev spaces $H^s$ for $(2\pi)$-periodic function
by
\begin{align*}
\|\phi\|_{H^s}:=\|\langle\xi\rangle^s\hat{\phi}\|_{l^2_{\xi}}
=\|J_x^s\phi(x)\|_{L^2_x},
\end{align*}
where $\langle\xi\rangle:=(1+|\xi|^2)^{\frac{1}{2}}$ and
$\widehat{J^s_x\phi}(\xi) :=\langle\xi\rangle^s\hat{\phi}(\xi)$. For
$ 2<q<\infty $ we define also the Sobolev type spaces $ H^s_q $ by
$$
\|\phi\|_{H^s_q} :=\| J_s^x \phi \|_{L^q} \; .
$$
We will use the following Bourgain-type spaces denoted by $X^{s,b},\
Z^{s,b}$ and $Y^{s}$ of $(2\pi)$-periodic (in $x$) functions
respectively endowed with the norm
\begin{align*}
\|u\|_{X^{s,b}}:=&\|\langle\xi\rangle^s
\langle\tau+|\xi|\xi\rangle^b\hat{u}(\tau,\xi)\|_{L^2_{\tau,\xi}},
\\
\|u\|_{Z^{s,b}}:=&\|\langle\xi\rangle^s
\langle\tau+|\xi|\xi\rangle^b\hat{u}(\tau,\xi)\|_{L^2_{\xi}L^1_{\tau}},
\end{align*}
and
\begin{align}
\|u\|_{Y^{s}}:=\|u\|_{X^{s,\frac{1}{2}}}+\|u\|_{Z^{s,0}}.
\end{align}
One can easily check that $ u \mapsto \overline{u}$ an isometry in  $ X^{s,b} $ and $ Z^{s,b} $ and that  $Y^{s}\hookrightarrow Z^{s,0}\hookrightarrow
C(\R;H^s)$. We will also use the space-time Lebesgue spaces denoted
by $L^p_tL^q_x$  of $(2\pi)$-periodic (in
$x$) functions endowed with the norm
\begin{align*}
\|u\|_{L^p_tL^q_x}:=\Big(\int_{\R}\|u(t,\cdot)\|_{L^q_x}^pdt\Big)^{1/p},\,
\end{align*}
with the obvious modification for $p=\infty$. For any space-time
function space $B$ and any $T>0$, we denote by $B_{T}$ the
corresponding restriction in time space endowed with the norm
\begin{align*}
\|u\|_{B_T}:=\inf_{v\in B}\{\|v\|_{B},v(\cdot)\equiv u(\cdot) \ on \
(0,T)\}.
\end{align*}

Let $\eta_0: \R\rightarrow [0, 1]$ denote an even smooth function
supported in $[-8/5, 8/5]$ and equal to $1$ in $[-5/4, 5/4]$. For
$k\in \N^*$ let $\chi_k(\xi)=\eta_0(\xi/2^{k-1})-\eta_0(\xi/2^{k-2})$,
$\eta_{\leq k}=\eta_0(\xi/2^{k-1})$, and then let $  P_{2^k}$ and $ P_{\leq 2^{k}}$
denote the operators on $L^2(\T)$ defined by
\[  \widehat{P_{1}u}(\xi)=\eta_0(2\xi) , \; \widehat{P_{2^k}u}(\xi)=\chi_k(\xi)\widehat{u}(\xi) \, , k\in \N^*,   \mbox{ and } \widehat{P_{\leq 2^k}u}(\xi)=\eta_{\leq k}(\xi)
 u(\xi)
 \; . \] By a slight abuse
of notation we  define the operators $P_{2^k},P_{\leq {2^k}}$ on
$L^2(\R\times \T)$ by the  formulas
$\ft(P_{2^k} u)(\tau,\xi)=\chi_k(\xi)\ft(u)(\tau,\xi)$,
$\ft(P_{\le 2^k}u)(\tau,\xi)=\eta_{\leq k}(\xi)\ft(u)(\tau,\xi)$. We also define
 the projection operators $P_\pm f=\ft^{-1}1_{\pm\xi>0}\ft f$,
$P_cf=\frac{1}{2\pi}\int_{\T} fdx$, $P_{\ne c}=I-P_c$, and
$P_{2^k}^+=P_+P_{2^k}$, $P_{\leq 2^k}^+=P_+P_{\leq 2^k}$.

To simplify the notation, we  use capitalized variables to
describes the dyadic number, i.e. any capitalized variables such as
$N $ range over the dyadic
 number $ 2^{\N} $. \\
Finally, for any $ 1\le p\le \infty $ and any function space  $ B $ we define the space-time function space $ \widetilde{L^p_t B} $ by
$$
\|u\|_{\widetilde{L_t^p B}} := \Bigl( \sum_{k=0}^{\infty} \| P_{2^k} u \|_{L^p_t B}^2 \Bigr)^{1\over 2}\; .
$$
It is worth noticing that Littlewood-Paley square function theorem
 ensures that $ \widetilde{L^p_tL^p_x}\hookrightarrow L^p_t L^p_x $ for $ 2\leq p<\infty $.

\section{Gauge transform}

In this section, we introduce the gauge transform. Let $u\in
C([-T,T]:H^\infty(\T))$ be a smooth solution to \eqref{eq:mBO2}.
Define the periodic primitive of $u^2-\frac{1}{2\pi}\norm{u(t)}_2^2$
with zero mean by
\[F=F(u)=\partial_x^{-1}P_{\ne c}(u^2)=\frac{1}{2\pi}\int_0^{2\pi}\int_\theta^x u^2(t,y)-\frac{1}{2\pi}\norm{u(t)}_{L^2}^2dyd\theta.\]
Let
\begin{align}\label{eq:gauge}
v=\gau(u):=P_+(e^{-iF}u),
\end{align}
then we look for the equation that $v$ solves. It holds
\begin{align*}
v_t=&P_+[e^{-iF}(-iF_tu+u_t)],\\
v_{xx}=&P_+[e^{-iF}(-F_x^2u-iF_xu_x-i(F_xu)_x+u_{xx})] \; ,
\end{align*}
and thus
\begin{align*}
v_t-iv_{xx}
=&P_+[e^{-iF}(-iF_tu+i(F_x)^2u-F_{xx} u)]\\
&+P_+[e^{-iF}(u_t-iu_{xx}-2 F_x u_x)]:=I+II.
\end{align*}
Using equation \eqref{eq:mBO2} we easily get
\begin{align*}
II=&P_+[e^{-iF}(u_t+\Hl u_{xx}-2iP_{-} u_{xx}-2 F_x u_x)]=-2i P_+[e^{-iF} P_{-} u_{xx}].
\end{align*}
Next we compute $I$. Using again  \eqref{eq:mBO2} and the conservation of the $ L^2 $-norm for smooth solutions, we have
\begin{align}
 F_t=&\partial_t \partial_x^{-1} (P_{\neq c} u^2) =  \partial_x^{-1} \partial_t   ( u^2-P_{c}u^2) =   \partial_x^{-1} \partial_t   u^2 \nonumber \\
= & 2\partial_x^{-1} \Bigl( -u\Hl u_{xx} +2 P_{\neq c}?u^2) u u_x  \Bigr)   \nonumber\\
= &  2\partial_x^{-1} \Bigl(-\partial_x( u \Hl u_x) + u_x \Hl u_x + P_{\neq c} (u^2) \partial_x P_{\neq c}(u^2) \Bigr) \nonumber \\
= & P_{\neq c}\Bigl( (P_{\neq c}u^2)^2\Bigr) -2 u \Hl u_x+2 P_{c}?u \Hl u_x) +2 \partial_x^{-1} (u_x \Hl u_x)\; . \label{F_t}
\end{align}
Noticing that $ F_x= P_{\neq c} (u^2) $ we infer that
$$
 -iuP_{\neq c}\Bigr(  (P_{\neq c} u^2)^2\Bigr)+iu (F_x)^2  =iu P_{c} \Bigl( (P_{\neq c} u^2)^2 \Bigr)
$$
and noticing that  $ F_{xx} =2 u u_x $,
$$
2iu P_{\neq c}Ê\Bigl( u\Hl\partial_xu\Bigr)-F_{xx}u= -4 u^2 P_{-}
u_{x}?2i u P_{c} (u\Hl u_x) \; .
$$
Moreover, following \cite{MR}, we will use the symmetry of the term $\partial_x^{-1} (u_x \Hl u_x)$. Indeed, it is easy to check that
 $ \partial_x^{-1} (u_x \Hl u_x)=-i \partial_x^{-1} (P_+ u_x)^2+i \partial_x^{-1} (P_- u_x)^2 $ and thus setting
  \begin{equation} \label{defB}
  B(u,v) =-i \partial_x^{-1} (P_+ u_x P_+ v_x)+i \partial_x^{-1} (P_- u_xP_- v_x) \; ,
  \end{equation}
  we infer that  $ \partial_x^{-1} (u_x \Hl u_x)=B(u,u) $.
 We thus finally get
\begin{align*}
I=P_+\Bigl[e^{-iF}&\Bigl(-4 u^2 P_{-} u_{x} -2i u  B(u,u)+2iu P_{c} (u\Hl u_x) -iu P_{c} \Bigl( (P_{\neq c} u^2)^2 \Bigr)  \Bigr]
\end{align*}
which  leads to
\begin{align}\label{eq:gauge1eq}
v_t-iv_{xx}=&P_+\Bigl[e^{-iF}\Bigl(-4 u^2 P_{-} u_{x} -2i P_{-} u_{xx}-2iu B(u,u)\nonumber\\
&-2iu P_{c} (u\Hl u_x) +iu P_{c} \Bigl( (P_{\neq c} u^2)^2 \Bigr)  \Bigr)\Bigr].
\end{align}
 Due to the projector $P_+,P_-$, we see formally that in the system
\eqref{eq:gauge}-\eqref{eq:gauge1eq} there is no high-low
interaction of the form
\[P_{low} u^2 \cdot \partial_xP_{high}u.\]
 Note that  $u\to \gau(u)$ can be "inverted" in Lebesgue space. This
is the strategy used in \cite{MRmBO} to prove well-posedness in
$H^1$. To go below to $H^{1/2}$, we intend to use $X^{s,b}$ spaces. But
$u\to \gau(u)$ can not be well "inverted" in Bourgain'spaces and  thus $ u $ will not have the same regularity as $ \gau(u)$ in these spaces.
To handle this former difficulty, we will insert the "inverse" into some of the
terms in \eqref{eq:gauge1eq}. We first observe that
\begin{align*}
-2iP_{+} \Bigl( e^{-iF}  P_{-} u_{xx}\Bigr) = & -2i \partial_x P_{+} ( e^{-iF} P_{-} u_x ) + 2 P_+ (e^{-iF} P_{\neq c} (u^2) P_{-} u_x) \\
= & -2 \partial_x P_+(\partial_x^{-1}P_{+}(e^{-iF} P_{\ne
c}(u^2))P_-u_{x})\\
&+2P_+(e^{-iF} u^2  P_-u_x)-2P_{c}(u^2) P_+(e^{-iF} P_-u_x)
\end{align*}
and thus the sum of the first two terms of the right-hand
side of \eqref{eq:gauge1eq}  can be rewritten as
\begin{align}
&-2P_+(e^{-iF} u^2  P_-u_x)-2 \partial_x P_+(\partial_x^{-1}P_{+}
(e^{-iF} u^2)P_-u_{x})\nonumber \\
& +2 P_{c}(u^2) \Bigl(  \partial_x P_+(\partial_x^{-1}P_{+} e^{-iF}
P_-u_{x}) - P_+(e^{-iF}   P_-u_x)\Bigr) \; . \label{M}
\end{align}
Now, let us denote
\begin{align}
R(u)& :=[P_+,e^{-iF}]u=P_+(e^{-iF}u)-e^{-iF}P_+u\nonumber \\
 & = P_+(e^{-iF} P_{-} u)+P_+(e^{-iF} P_{c} u) - P_{-} (e^{-iF} P_+ u)-P_{c} (e^{-iF} P_+ u)   \; . \label{eq:Ru}
\end{align}
Formally, $R(u)$ is a commutator, and has one
order higher regularity than $F_x=P_{\neq c} u^2 $ (see Lemma \ref{lemma2}).
Then we get
\begin{align}
v=e^{-iF}P_+u+R(u), \label{uu}
\end{align}
and thus
$ P_+ u = e^{iF} v -e^{iF}[P_+,e^{-iF}] u $. Since $ u$ is real-valued, this leads to
\begin{equation}
P_- u =  P_-(e^{-iF} \overline{v})-P_-(e^{-iF}\overline{R(u)}) \; .
\label{Pu}
\end{equation}
Substituting $ P_{-} u $ by the expression \eqref{Pu} in the two first terms of \eqref{M} we eventually get
the following equation satisfied by $ v $ :
\begin{align}
v_t-iv_{xx}=& 2 N^0(u,v)+2 N^1(u,v) -2i   P_+(e^{-iF}u B(u,u))+G(u)\label{eq:sys1}
\end{align}
where
$$
 N^\nu(u,v):= -\partial_x^{\nu}  P_+(\partial_x^{-\nu}P_{+} (e^{-iF}u^2) \partial_x P_-(e^{-iF}\overline{v})), \quad
 \nu=0,1.
 $$
and
\begin{align}
G(u) :=&P_+\Bigl( e^{-iF}\Bigl(-2i u P_{c} (u\Hl u_x)+u P_{c} ( (P_{\neq c} u^2)^2 ) +2 u^2 \partial_x P_{-} (e^{-iF} \overline{R(u)})\Bigr)\Bigr) \nonumber \\
& -2 \partial_x P_{+} \Bigl( \partial_x^{-1}P_{+}(e^{-iF}) P_-u_{x}\Bigr)
+2i  \partial_x P_{+}\Bigl( (e^{-iF}\partial_x P_{-} (e^{-iF} \overline{R(u)})\Bigr) \nonumber \\
 &+2P_{c}(u^2)  \Bigl(-P_+(e^{-iF} P_{-} u_{x} ) + \partial_x P_+ \Bigl( \partial_x^{-1} P_{+}
(e^{-iF}) P_-u_{x}) \Bigr) \Bigr) \; . \label{defG}
\end{align}
We will see that the worst terms of the right-hand side of
\eqref{eq:sys1} are the first two terms. Actually the use of Bourgain's spaces will be necessary to handle the first three terms of
\eqref{eq:sys1}. On the other hand, $ G(u) $
is a nice term that belongs to $ L^2_t
H^{1/2}_x $ as soon as $ u \in L^\infty_t H^{1/2} $.

\section{The main estimates and proof of Theorem \ref{thm}}

In this section, we present the main estimates. By combining all
these estimates, we finish the proof of Theorem \ref{thm}.

\subsection{Linear Estimates}
We list some linear estimates in this subsection. The first ones are
the standard estimates for the linear solution, see \cite{Bour} and  \cite{Gin}.
\begin{lemma}\label{lem:linear}
Let $s\in \R$. There exists $C>0$ such that for all $f\in
X^{s,-\half1+}$ and  all $u_0\in H^s$ we have
\begin{align}
\|W(t)u_0\|_{Y_T^s}\leq& C\|u_0\|_{H^s} \label{to1} \\
\normo{\int_0^tW(t-\tau)f(\tau)d\tau}_{Y_T^s}\leq&
C\|f\|_{X_T^{s,-\half1+}}\; . \label{to2}
\end{align}
\end{lemma}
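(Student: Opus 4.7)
The estimates are the classical Bourgain-type bounds for the free evolution and the Duhamel term, specialized to the endpoint $b=1/2$ via the auxiliary space $Y^s=X^{s,1/2}\cap Z^{s,0}$. I would follow the method of Bourgain \cite{Bour} and Ginibre \cite{Gin}. Fix once and for all $\psi\in C^\infty_c(\R)$ with $\psi\equiv 1$ on $[-1,1]$ and $\supp\psi\subset[-2,2]$, and assume $T\in(0,1]$ without loss of generality (a standard time-partition argument reduces the general case to this one).

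For \eqref{to1}, take $v(t,x):=\psi(t)(W(t)u_0)(x)$ as the extension, which agrees with $W(\cdot)u_0$ on $[0,T]$. Its spacetime Fourier transform factors as $\hat v(\tau,\xi)=\hat\psi(\tau+|\xi|\xi)\hat u_0(\xi)$, and Fubini yields
$$\|v\|_{X^{s,1/2}}^2=\|u_0\|_{H^s}^2\int_\R\langle\sigma\rangle|\hat\psi(\sigma)|^2\,d\sigma,\qquad \|v\|_{Z^{s,0}}=\|\hat\psi\|_{L^1_\sigma}\|u_0\|_{H^s},$$
both finite because $\hat\psi\in\mathcal S(\R)$.

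For \eqref{to2}, inserting the spacetime Fourier representation of $f$ into the Duhamel formula and interchanging the order of integration gives
$$\int_0^tW(t-\tau)f(\tau)\,d\tau=\int_\R\int_\Z e^{i\xi x}\,\frac{e^{it\tau_1}-e^{-it|\xi|\xi}}{i(\tau_1+|\xi|\xi)}\hat f(\tau_1,\xi)\,d\xi\,d\tau_1.$$
After multiplying by $\psi(t)$, I split the $\tau_1$-integration with a smooth cutoff into the outer region $\{|\tau_1+|\xi|\xi|\geq 1\}$ and the inner region $\{|\tau_1+|\xi|\xi|<1\}$. In the outer region, after multiplication by $\psi$, the first exponential contributes a term whose spacetime Fourier transform equals $\hat\psi(\tau-\tau_1)(\tau_1+|\xi|\xi)^{-1}\hat f(\tau_1,\xi)$ and the second contributes one with $\hat\psi(\tau+|\xi|\xi)(\tau_1+|\xi|\xi)^{-1}\hat f(\tau_1,\xi)$; rapid decay of $\hat\psi$ together with Cauchy--Schwarz in $\tau_1$ (the usual $b+b'>1$ pairing with $b=1/2$, $b'=1/2-\e$) yields the $X^{s,1/2}$-bound, while Fubini plus $\|\hat\psi\|_{L^1_\sigma}<\infty$ gives the $Z^{s,0}$-bound, both by $\|f\|_{X^{s,-1/2+}}$. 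In the inner region I expand
$$\frac{e^{it\tau_1}-e^{-it|\xi|\xi}}{i(\tau_1+|\xi|\xi)}=e^{-it|\xi|\xi}\sum_{k\geq 1}\frac{(it)^k(\tau_1+|\xi|\xi)^{k-1}}{k!},$$
converting the contribution into $\sum_{k\geq 1}\frac{i^k}{k!}\psi(t)t^kW(t)g_k$ with $g_k(\xi):=\int_{|\tau_1+|\xi|\xi|<1}(\tau_1+|\xi|\xi)^{k-1}\hat f(\tau_1,\xi)\,d\tau_1$. A direct Cauchy--Schwarz argument produces $\|g_k\|_{H^s}\lesssim\|f\|_{X^{s,-1/2+}}$ uniformly in $k$, and since $\psi(t)t^k\in\mathcal S(\R)$ with norms growing at most polynomially in $k$, applying \eqref{to1} to each summand together with the $1/k!$ factor yields an absolutely convergent bound.

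The main obstacle is the borderline index $b=1/2$: the $X^{s,1/2}$-space alone does not embed into $C_tH^s_x$, so the $Z^{s,0}$-component of $Y^s$ is essential, and every step must be carried out for this norm in parallel, using $L^1_\tau$-type bounds rather than the more symmetric $L^2_\tau$ estimates. Thanks to the Schwartz cutoff $\psi$, both families of bounds remain elementary.
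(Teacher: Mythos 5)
Your proof is correct and is exactly the classical cutoff-plus-Taylor-expansion argument of Bourgain and Ginibre that the paper invokes by citation for this lemma (the paper gives no proof of its own). The only quibble is that the relevant norms of $\psi(t)t^k$ grow like $k\,2^k$ rather than polynomially, but this is still harmless against the $1/k!$ factor, so the conclusion stands.
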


Next, we need some embedding properties of the space $Y^s$. The
first one is the well-known estimate due to Bourgain \cite{Bour}
\begin{align}\label{str}
\|v\|_{L^4_{t,x}}\les\|v\|_{\widetilde{L^4_{t,x}}}\lesssim\|v\|_{X^{0,3/8}}
\end{align}
where the first inequality above follows from the Littlewood-Paley
square function theorem. Note that \eqref{to1} combined with
\eqref{str} ensures that for $ 0\le T \le 1 $,
\begin{equation}\label{to3}
\| W(t) u_0\|_{L^4_{tx}} \lesssim \| u_0\|_{L^2}\; .
\end{equation}

\subsection{Main Non-linear Estimates}

\begin{proposition}[Estimates of $u$]\label{lem:estu}
Let $ T\in ]0,1[ $, $s\in[\half1,1]$ and  $ (u_i,v_i) \in \Bigl( C^{0}_T H^{s}\cap \widetilde{L^{4}_TH_4^{s} }\Bigr) \times Y_T^s $, $ i=1,2 $,  satisfying
\eqref{eq:mBO2} and \eqref{eq:gauge}  on $ ]-T,T[ $ with initial data $ u_{i,0}$. Then for
$u=u_i$
\begin{align}\label{eq:lemestu2}
\|u\|_{\widetilde{L^{4}_TH^{s}_4}} &\lesssim
(1+\|u\|_{L^{\infty}_TH^{\frac{1}{2}}
}^4) \|v\|_{Y_T^{s}}
+T^\frac{1}{4}(1+\|u\|_{L^{\infty}_TH^{\frac{1}{2}} }^8) \|u\|_{L^\infty_T H^{\frac{1}{2}}},
\end{align}
and for large $N\in \N$, we have
\begin{align}
\|u\|_{L^{\infty}_TH^s}
\les&\|u_0\|_{H^s}+TN^2\|u\|_{L^\infty_TH^{1/2}}^3+\norm{v}_{Y^s_T}\nonumber\\
&+N^{-\frac{1}{4}}(\|u\|_{L^{\infty}_TH^{\frac{1}{2}}}+\norm{v}_{Y_T^s})(1+\|u\|_{L^{\infty}_TH^{\frac{1}{2}} }^8). \label{eq:lemestu3}
\end{align}
Moreover, we have
\begin{align}
\|u_1-u_2\|_{\widetilde{L^{4}_TH^{1/2}_4}}\les&(1+\|u\|_{L^{\infty}_TH^{1/2}
}^4) \|v_1-v_2\|_{Y_T^{\frac{1}{2}}}\nonumber\\
& +\norm{u_1-u_2}_{L_T^\infty
H^{1/2}}  \| v_1\|_{Y_T^{1\over 2}} \prod_{i=1}^2(1+\norm{u_i}_{L_T^\infty H^{\frac{1}{2}}})^3 \nonumber\\
&+T^{1/4}\norm{u_1-u_2}_{L_T^\infty H^{\frac{1}{2}}}\prod_{i=1}^2(1+\norm{u_i}_{L_T^\infty
H^{\frac{1}{2}}})^8 \label{dif1}
\end{align}
and
\begin{align}
\|u_1-u_2\|_{L^{\infty}_TH^{1/2}} \les &
\norm{u_{1,0}-u_{2,0}}_{H^{1/2}}+\prod_{i=1}^2(1+\norm{u_i}_{L_T^\infty
H^{\frac{1}{2}}}+\norm{v_i}_{Y^{1/2}_T})^8\nonumber\\
&\times\bigg(\norm{v_1-v_2}_{Y_T^{1/2}}+(T
N^2+N^{-1/4})\norm{u_1-u_2}_{L_T^\infty H^{\frac{1}{2}}}\bigg).
\label{dif2}
\end{align}
\end{proposition}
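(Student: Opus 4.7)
The plan is to invert the gauge transform at the level of the positive-frequency projection and then transfer estimates on $v$ to estimates on $u$, handling the loss of derivative in the nonlinearity by a high/low frequency split. The starting point is the identity $v=e^{-iF}P_+u+R(u)$ established in \eqref{uu}, which, multiplying by $e^{iF}$, gives
\[
P_+u = e^{iF}v - e^{iF}R(u).
\]
Because $u$ is real-valued, $u=2\re P_+u + P_cu$, and the mean $P_cu$ is preserved in $L^2$, so it suffices to estimate $e^{iF}v$ and the commutator term $e^{iF}R(u)$. The key mechanism is that $R(u)=[P_+,e^{-iF}]u$ is a smoothing commutator: since $F_x = P_{\ne c}(u^2)$ is controlled in $H^{1/2}$-type spaces, one expects (and would invoke the commutator/Coifman-Meyer type bound referenced as Lemma \ref{lemma2}) that $R(u)$ gains one derivative over $F_x$, so that $R(u)$ lives in $H^{3/2}$ with a polynomial dependence on $\|u\|_{H^{1/2}}$. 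Multiplication by $e^{iF}$ must then be handled carefully in Bourgain-type spaces; this is where the Marcinkiewicz multiplier theorem alluded to in the introduction enters.

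For \eqref{eq:lemestu2}, I would estimate $\|P_+u\|_{\widetilde{L^4_TH^s_4}}$ by writing $P_+u=e^{iF}v-e^{iF}R(u)$ and using the Bourgain embedding $X^{0,3/8}\hookrightarrow \widetilde{L^4_{t,x}}$ combined with a moser-type product/fractional Leibniz estimate in $H^s_4$. The factor $e^{iF}$ is an $L^\infty$-multiplier, and the fractional derivative falling on $e^{iF}$ produces powers of $F_x=P_{\ne c}u^2$, whence the polynomial factor $(1+\|u\|_{L^\infty_TH^{1/2}}^4)$. The second term on the right, $T^{1/4}(1+\|u\|_{L^\infty_TH^{1/2}}^8)\|u\|_{L^\infty_TH^{1/2}}$, comes from the low-frequency contribution and the commutator $e^{iF}R(u)$, where H\"older in time converts an $L^\infty_T$ bound into an $L^4_T$ bound at the cost of $T^{1/4}$.

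The estimate \eqref{eq:lemestu3} is the main technical step. I would use the equation \eqref{eq:mBO2} directly. Split $u = P_{\le N}u + P_{>N}u$. For the low-frequency part $P_{\le N}u$, apply a standard energy estimate to $\partial_t u + \Hl\partial_x^2u = 2P_{\ne c}(u^2)u_x$: Duhamel plus the crude bound $\|P_{\le N}(P_{\ne c}(u^2)u_x)\|_{L^1_TH^s}\lesssim TN^2\|u\|_{L^\infty_TH^{1/2}}^3$ produces the $TN^2\|u\|_{L^\infty_TH^{1/2}}^3$ contribution (one derivative from $u_x$ combined with a crude Bernstein factor $N$). For the high-frequency part $P_{>N}u$, there is no direct way to close an energy estimate, so use the inverted gauge: $P_{>N}P_+u = P_{>N}(e^{iF}v) - P_{>N}(e^{iF}R(u))$. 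The first term is estimated by $\|v\|_{Y^s_T}$ up to the multiplier $e^{iF}$; the second is estimated by the commutator's regularity gain, which costs an extra $N^{-1/4}$ thanks to the embedding $H^{s+1/4}_{>N}\hookrightarrow N^{-1/4}H^{s+1/2}$-type Bernstein inequality, producing the term $N^{-1/4}(\|u\|_{L^\infty_TH^{1/2}}+\|v\|_{Y^s_T})(1+\|u\|_{L^\infty_TH^{1/2}}^8)$. Balancing $N$ later will give the actual small parameter in the fixed-point argument.

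The difference estimates \eqref{dif1}--\eqref{dif2} are proved by the same scheme applied to $u_1-u_2$, using multilinearity. The main bookkeeping obstacle is that $R(u)$ is nonlinear in $u$, so $R(u_1)-R(u_2)$ decomposes into commutators that are linear in $u_1-u_2$ with coefficients polynomial in $u_1,u_2$; similarly $e^{-iF(u_1)}-e^{-iF(u_2)}$ expands via the mean value theorem into $(F(u_1)-F(u_2))$ times bounded factors, and $F(u_1)-F(u_2)=\partial_x^{-1}P_{\ne c}((u_1+u_2)(u_1-u_2))$. Collecting the powers of $(1+\|u_i\|_{L^\infty_TH^{1/2}})$ arising from each polynomial factor and each application of the commutator estimate yields the factor $\prod_{i=1}^2(1+\|u_i\|_{L^\infty_TH^{1/2}})^8$ appearing in \eqref{dif1}--\eqref{dif2}. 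The main obstacle throughout will be controlling the multiplication by $u$ and by $e^{\pm iF}$ in the Bourgain space $Y^s$ at the critical regularity $s=1/2$; this is precisely where the Marcinkiewicz-multiplier approach advertised in the introduction will be indispensable to avoid logarithmic losses.
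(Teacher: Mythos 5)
Your proposal follows essentially the same route as the paper: invert the gauge via $P_+u=e^{iF}v-e^{iF}R(u)$ with $u=2\re P_+u+P_cu$, exploit the commutator smoothing of $R(u)$ (the paper obtains $H^{5/4+}$ rather than $H^{3/2}$, but only the resulting $N^{-1/4}$ Bernstein gain is used), split into low frequencies (Duhamel on \eqref{eq:mBO2}, giving the $TN^2$ term) and high frequencies (gauge inversion), and treat the differences by the same scheme via the mean value theorem. The one inessential discrepancy is your repeated appeal to the Marcinkiewicz multiplier theorem: the paper reserves that tool for the multilinear estimates on $v$ in Proposition \ref{lem:estv}, whereas the present proposition is closed with elementary Sobolev product laws, the commutator Lemma \ref{lemma2}, Lemma \ref{lemma4}, and H\"older in time (the source of the $T^{1/4}$).
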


\begin{proposition}[Estimates of $v$]\label{lem:estv} Let  $ 0<T<1$, $ s\in[\frac{1}{2},1] $ and $(u_i,v_i)\in \Bigl( C^{0}_tH^{s}\cap \widetilde{L^{4}_TH_4^{s} }\Bigr) \times Y_T^s $ satisfying  \eqref{eq:mBO2}, \eqref{eq:gauge}  and \eqref{eq:sys1}  on $ ]-T,T[ $. Then for $(u,v)=(u_i,v_i) $ there exists $ \nu>0 $ and $ q\in \N^* $ such that
\begin{align}
\|v\|_{Y_T^{s}}\les &(1+\|u_0\|_{H^{\frac{1}{2}}}^4) \|u_0\|_{H^s}
+T^{\nu} \Bigl( (1+\|u\|_{L^\infty_T H^{1\over 2}\cap \widetilde{L^4 H^{1/ 2}_4}}^{q+1}) \| v\|_{X^{s,1/2}} \nonumber \\
& +   (1+\|u\|_{L^\infty_T H^{1\over 2}\cap \widetilde{L^4 H^{1/2}_4}}^{q})\| v\|_{X^{1/2,1/2}} \|u\|_{L^\infty_T H^{s}\cap \widetilde{L^4 H^{s}_4}}\Bigr)
.            \label{lem:estv1}
\end{align}
and
\begin{align}
\|v_1-v_2\|_{Y_T^{\frac{1}{2}}}\les &(1+\|u_0\|_{H^{\frac{1}{2}}}^4)
\|u_{1,0}-u_{2,0}\|_{H^s}\\
&+T^{\nu} \Bigl[ \Bigl(1+\sum_{i=1}^2 \|u_i\|_{L^\infty_T H^{1\over 2}\cap \widetilde{L^4 H^{1/ 2}_4}}^{q+1}\Bigr) \| v_1-v_2\|_{X^{s,1/2}} \nonumber \\
& +   \Bigl(1+\sum_{i=1}^2\|u_i\|_{L^\infty_T H^{1\over 2}\cap
\widetilde{L^4 H^{1/2}_4}}^{q}\Bigr)(\sum_{i=1}^2\|
v_i\|_{X^{1/2,1/2}} )\|u_1-u_2\|_{L^\infty_T H^{s}\cap
\widetilde{L^4 H^{s}_4}}\Bigr] .            \label{lem:estv11}
\end{align}
\end{proposition}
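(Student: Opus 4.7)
The plan is to prove both bounds of Proposition~\ref{lem:estv} by the same nonlinear scheme, starting from the Duhamel form of the gauge equation \eqref{eq:sys1} on $[-T,T]$. Applying the linear estimates \eqref{to1}--\eqref{to2} of Lemma~\ref{lem:linear} reduces matters to
$$
\|v\|_{Y_T^s} \lesssim \|v(0)\|_{H^s} + \sum_{j=1}^{4}\|\text{RHS}_j\|_{X^{s,-1/2+}_T},
$$
where the four right-hand side contributions are $N^0(u,v)$, $N^1(u,v)$, $P_+(e^{-iF}uB(u,u))$ and $G(u)$. For the initial-data term I expand $e^{-iF(u_0)}$ as a power series; since $F(u_0)=\partial_x^{-1}P_{\neq c}(u_0^2)$ obeys $\|F(u_0)\|_{H^{3/2}}\lesssim \|u_0\|_{H^{1/2}}^2$ and $H^{1/2}(\T)$ is an algebra, I obtain $\|v(0)\|_{H^s}\lesssim (1+\|u_0\|_{H^{1/2}}^4)\|u_0\|_{H^s}$, which is the first term on the right-hand side of \eqref{lem:estv1}.

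The core bilinear analysis concerns $N^0$ and $N^1$: their structure $-\partial_x^{\nu}P_+\bigl(\partial_x^{-\nu}P_+(e^{-iF}u^2)\,\partial_x P_-(e^{-iF}\bar v)\bigr)$ was tuned precisely so that the outer $P_+$ and the inner $P_-$ on the derivative factor eliminate the dangerous high$\times$(low-derivative) interaction flagged in the discussion after \eqref{eq:gauge1eq}. I would perform a Littlewood--Paley decomposition and treat the High-High, High-Low and Low-High regimes separately, placing $\partial_x P_-(e^{-iF}\bar v)$ in an $X^{\cdot,1/2}$ norm inherited from $v\in Y^s_T$ and the companion $P_+(e^{-iF}u^2)$ in a Bourgain-compatible space. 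Since $u$ itself does not sit naturally in an $X^{s,b}$-space, the key tool announced in the introduction---the Marcinkiewicz multiplier theorem---is invoked to bound multiplication by $e^{-iF}u^2$ on $X^{s,b}$ by a polynomial in $\|u\|_{L^\infty_T H^{1/2}\cap \widetilde{L^4_T H^{1/2}_4}}$. Bourgain's $L^4_{t,x}$ embedding \eqref{str} then closes the estimate.

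For the cubic contribution $-2iP_+(e^{-iF}uB(u,u))$, recall from \eqref{defB} that $B$ gains one derivative through $\partial_x^{-1}$; combining this with Sobolev embedding and the $\widetilde{L^4_T H^{1/2}_4}$-control of $u$, the term fits the second line of \eqref{lem:estv1}. The remainder $G(u)$ of \eqref{defG} belongs to $L^2_T H^{1/2}$ as soon as $u\in L^\infty_T H^{1/2}$, since the commutator $R(u)$ in \eqref{eq:Ru} is one order smoother than $F_x=P_{\neq c}u^2$ and the $P_c$-factors are harmless scalar multipliers; through the embedding $L^2_T H^s\hookrightarrow X^{s,-1/2+}_T$ the same polynomial bound applies. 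At each step a smooth time cutoff $\chi(t/T)$ provides the gain $T^\nu$, either via the standard $X^{s,b}$ time-localization lemma or via H\"older in $t$ against $\widetilde{L^4_T H^{1/2}_4}$. Summation over the four terms gives \eqref{lem:estv1}.

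The difference estimate \eqref{lem:estv11} follows by applying the identical analysis to the Duhamel formula for $v_1-v_2$. Each nonlinear term is multilinear in $(u,v,e^{-iF(u)})$, so the telescoping $A_1B_1-A_2B_2=(A_1-A_2)B_1+A_2(B_1-B_2)$ reduces everything to the bilinear and multiplier bounds of the first part, combined with the elementary Taylor estimate $\|e^{-iF(u_1)}-e^{-iF(u_2)}\|_{H^{3/2}}\lesssim (\|u_1\|_{H^{1/2}}+\|u_2\|_{H^{1/2}})\|u_1-u_2\|_{H^{1/2}}$. The principal obstacle throughout is precisely the regularity mismatch between $u$ and $v$ in Bourgain space already emphasised by the authors; the Marcinkiewicz-multiplier treatment of the $u$-factors, together with the careful exploitation of the $P_+P_-$ cancellation in $N^0,N^1$, is the decisive ingredient that makes the scheme close down to $s=1/2$.
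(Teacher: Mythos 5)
Your overall architecture (Duhamel formula, Lemma \ref{lem:linear} plus $L^2_TH^s\hookrightarrow X^{s,-1/2+}_T$, term-by-term treatment of $N^0$, $N^1$, the $B(u,u)$ term and $G(u)$, and the telescoping identity for the difference bound) matches the paper, and your treatment of $v(0)=\gau(u_0)$ and of $G(u)$ is essentially what is done. But the heart of the proposition --- the estimate of $N^\nu(u,v)$ --- is where your proposal has a genuine gap. You propose to ``invoke the Marcinkiewicz multiplier theorem to bound multiplication by $e^{-iF}u^2$ on $X^{s,b}$ by a polynomial in $\|u\|_{L^\infty_TH^{1/2}\cap\widetilde{L^4_TH^{1/2}_4}}$.'' No such operator bound holds: multiplication by a function with only $H^{1/2}$-type spatial regularity and no modulation control is not bounded on $X^{s,1/2}$, and a plain high/low Littlewood--Paley analysis with $v\in X^{1/2,1/2}$ against $e^{-iF}u^2\in L^\infty_tH^{1/2}$ cannot absorb the full derivative $\partial_xP_-(e^{-iF}\bar v)$: one must exploit the resonance identity $|\sigma_{\max}|\gtrsim|\xi\xi_5|\gtrsim (N\wedge N_5)N_3$ produced by the $P_+\,/\,P_-$ frequency separation, and to do that one needs \emph{every} factor to carry some modulation information.

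The two ingredients you are missing are exactly the content of Lemmas \ref{lemma42}--\ref{lemma43} and of the modulation splitting in the multilinear proposition. First, one constructs extensions of $u$ and of $e^{-iF(u)}$ (via $w(t)=W(-t)u(t)$ extended with $\partial_tw=0$, using the equation to compute $\partial_tw$) which lie in $X^{s-1,1}$ and $X^{-1/2-,1}$ respectively: high modulation is traded against a full loss of spatial derivatives, which is affordable for these factors. Second, each factor is split at the modulation threshold $2^{-4}NN_5$ into $\tilde z+\tilde{\tilde z}$; the high-modulation parts $\tilde{\tilde z}$ are handled through the $X^{\sigma,1}$ norms above, while the low-modulation parts $\tilde z$ are returned to $L^p_{tx}$ via the Marcinkiewicz lemma \eqref{mikhlin}, which shows that $\phi_{N_i}(\xi)\eta_L(\tau\mp\xi^2)$ is an $L^p_{tx}$ multiplier precisely when $N_i^2\le L$ --- and this inequality is available because in the dangerous region the frequencies $N_1,N_2,N_4$ of the $u$- and $e^{-iF}$-factors are small compared with $\sqrt{NN_5}$. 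That is the actual role of the Marcinkiewicz theorem, and without this mechanism (or a substitute for it) the estimate for $N^0$, $N^1$ and for $e^{-iF}uB(u,u)$ does not close at $s=1/2$.
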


The rest of this subsection is devoted to proving Proposition
\ref{lem:estu}, while the proof of Proposition \ref{lem:estv} will
be given in the next section.
\subsection{Proof of  Proposition
\ref{lem:estu}}
We start with recalling some  technical
lemmas that  will be needed hereafter.
We first recall the Sobolev multiplication
laws.
\begin{lemma}\label{lem:mul}
(a) Assume one of the following condition
\[s_1+s_2\geq 0, s\leq s_1,s_2,s<s_1+s_2-\frac{1}{2},\]
\[ \mbox{ or }\
s_1+s_2>0,s<s_1,s_2, s\leq s_1+s_2-\frac{1}{2}.\] Then
\[\norm{fg}_{H^s}\les \norm{f}_{H^{s_1}}\norm{g}_{H^{s_2}}.\]

(b) For any $s\geq 0$, we have
\[\norm{fg}_{H^s}\les \norm{f}_{H^s}\norm{g}_{L^\infty}+\norm{g}_{H^s}\norm{f}_{L^\infty}.\]
\end{lemma}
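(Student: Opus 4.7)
The plan is to prove both parts via a Littlewood--Paley (Bony) paraproduct decomposition, which on $\T$ works exactly as on $\R$ (the sums run over dyadic integers instead of a full dyadic family, but Bernstein's inequality and the square function theorem are the same). Writing
\[
fg=\sum_{N,M} P_N f\, P_M g = \Pi_1(f,g)+\Pi_2(f,g)+\Pi_3(f,g),
\]
where $\Pi_1$ gathers the high--low terms ($M\ll N$), $\Pi_2$ the symmetric low--high terms ($N\ll M$), and $\Pi_3$ the high--high terms ($N\sim M$). For $\Pi_1$, the output frequency is $\sim N$, so its $H^s$-norm squared is $\sim \sum_N N^{2s} \|P_N(\Pi_1)\|_{L^2}^2$ with $P_N(\Pi_1)\sim P_N f\cdot P_{\ll N}g$. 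Symmetric reasoning applies to $\Pi_2$. For $\Pi_3$, the output lies at all frequencies $K\lesssim N$, so after applying $P_K$ only the pairs with $N=M\gtrsim K$ contribute.

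For (a), I would estimate $\Pi_1$ by using Bernstein in 1D, $\|P_M g\|_{L^\infty}\lesssim M^{1/2}\|P_M g\|_{L^2}$, giving
\[
\|P_N(\Pi_1)\|_{L^2}\lesssim \|P_N f\|_{L^2}\sum_{M\ll N} M^{1/2}\|P_M g\|_{L^2}
\lesssim N^{-s_1}a_N\Bigl(\sum_{M\ll N} M^{\frac12-s_2}b_M\Bigr)\|f\|_{H^{s_1}}\|g\|_{H^{s_2}},
\]
with $(a_N),(b_M)\in\ell^2$. Taking the $N^s\ell^2$-norm and using $s\le s_1$ together with the dyadic summation on $M$, the factor produced is finite as soon as either $s_2>1/2$, or $s_2\le 1/2$ and $s<s_1+s_2-1/2$; the edge $s=s_1+s_2-1/2$ requires that the $s\le s_1$ inequality be strict (second line of hypothesis). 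The contribution of $\Pi_2$ is symmetric. For $\Pi_3$, by Cauchy--Schwarz
\[
\|P_K(\Pi_3)\|_{L^2}\lesssim K^{1/2}\sum_{N\gtrsim K}\|P_N f\|_{L^2}\|P_N g\|_{L^2}\lesssim K^{1/2}\sum_{N\gtrsim K} N^{-s_1-s_2} a_N b_N\, \|f\|_{H^{s_1}}\|g\|_{H^{s_2}},
\]
where the $K^{1/2}$ comes from Bernstein. Taking the $K^s\ell^2$ norm produces a finite quantity precisely when $s\le s_1+s_2-1/2$ (strictly if $s_1+s_2=0$, which is why one needs $s_1+s_2\ge 0$ with strictness in the borderline case).

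For (b), I would use the same splitting. In $\Pi_1$ the frequency localization of $P_N f$ allows us to estimate $\|P_N(\Pi_1)\|_{L^2}\lesssim \|P_N f\|_{L^2}\|g\|_{L^\infty}$ pointwise in $N$, and summing after multiplying by $N^s$ gives $\|f\|_{H^s}\|g\|_{L^\infty}$; the symmetric term $\Pi_2$ gives $\|g\|_{H^s}\|f\|_{L^\infty}$. For $\Pi_3$ (which is where $s\ge 0$ enters), I would write $K^s\le K^s$ with $K\lesssim N$ and use $s\ge 0$ to bound $K^s\lesssim N^s$; then
\[
\Bigl(\sum_K K^{2s}\|P_K(\Pi_3)\|_{L^2}^2\Bigr)^{1/2}\lesssim \Bigl\|\sum_{N\gtrsim \cdot} N^s \|P_N f\|_{L^2}\|P_N g\|_{L^\infty}\Bigr\|_{\ell^2_K}\lesssim \|f\|_{H^s}\|g\|_{L^\infty},
\]
by Schur's test on the kernel $\mathbf{1}_{N\gtrsim K}$, together with the symmetric bound.

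The main obstacle is the bookkeeping at the borderline cases in (a), where the two distinct hypotheses in the statement correspond to whether the critical loss occurs in the high--low/low--high interaction (requiring $s<s_1+s_2-1/2$ strict, with $s\le s_i$) or in the high--high interaction (requiring $s<s_i$ strict, with $s\le s_1+s_2-1/2$). Once one tracks which inequality needs to be strict in which regime, the rest is routine dyadic summation.
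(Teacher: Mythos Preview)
The paper does not prove this lemma at all: it is simply ``recalled'' as a classical Sobolev multiplication law, with no argument given. So there is no paper proof to compare against, and your paraproduct approach is exactly the standard way one establishes such product estimates.

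Your sketch is essentially correct, with one slip in the treatment of $\Pi_3$ in part~(b). After bounding
\[
K^s\|P_K(\Pi_3)\|_{L^2}\lesssim \|g\|_{L^\infty}\sum_{N\gtrsim K} K^s\|P_Nf\|_{L^2},
\]
you replace $K^s$ by $N^s$ and then invoke Schur's test on the kernel $\mathbf{1}_{N\gtrsim K}$. That kernel does \emph{not} satisfy the Schur hypothesis (its row and column sums are unbounded), so the step as written fails. The fix is immediate: do not push all of $K^s$ onto $N^s$, but instead keep the ratio and write
\[
K^s\|P_K(\Pi_3)\|_{L^2}\lesssim \|g\|_{L^\infty}\sum_{N\gtrsim K}\Bigl(\frac{K}{N}\Bigr)^{s} N^{s}\|P_Nf\|_{L^2}.
\]
For $s>0$ the kernel $(K/N)^s\mathbf{1}_{K\lesssim N}$ \emph{does} have uniformly bounded $\ell^1$ row and column sums, so Schur (equivalently, discrete Young) gives the desired $\ell^2_K$ bound by $\|f\|_{H^s}\|g\|_{L^\infty}$; the case $s=0$ is trivial since then $\|fg\|_{L^2}\le\|f\|_{L^2}\|g\|_{L^\infty}$ directly. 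With this correction your argument for (b) is complete, and your outline for (a), including the remark that the two hypothesis lines correspond to where the strict inequality is needed, is the right one.
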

Second,  we state the
classical fractional Leibniz rule estimate derived by Kenig, Ponce
and Vega (See Theorems A.8 and A.12 in \cite{KPV2}).
\begin{lemma} \label{leibrule}
Let $0<\alpha<1$, $p, \ p_1, \ p_2 \in (1,+\infty)$ with $\frac1{p_1}+\frac1{p_2}=\frac1p$
and $\alpha_1, \ \alpha_2 \in [0,\alpha]$ with $\alpha=\alpha_1+\alpha_2$. Then,
\begin{equation} \label{leibrule1}
\big\|D^{\alpha}_x(fg)-fD^{\alpha}_xg-gD^{\alpha}_xf \big\|_{L^p}
\lesssim \|D_x^{\alpha_1}g\|_{L^{p_1}}\|D^{\alpha_2}_xf\|_{L^{p_2}}.
\end{equation}
Moreover, for $\alpha_1=0$, the value $p_1=+\infty$ is allowed.
\end{lemma}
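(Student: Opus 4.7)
The plan is to prove this commutator bound via a Littlewood-Paley paraproduct decomposition, reducing the statement to Coifman-Meyer-type bilinear multiplier estimates. Writing the product via Bony's decomposition as
\[
fg = \pi_f(g) + \pi_g(f) + \rho(f,g),
\]
with $\pi_f(g) = \sum_k (P_{\leq 2^{k-3}} f)\, P_{2^k} g$ and $\rho(f,g)$ the resonant part collecting comparable frequencies, the left-hand side of \eqref{leibrule1} splits as
\[
D^\alpha_x(fg) - f D^\alpha_x g - g D^\alpha_x f = \bigl(D^\alpha_x \pi_f(g) - f D^\alpha_x g\bigr) + \bigl(D^\alpha_x \pi_g(f) - g D^\alpha_x f\bigr) + D^\alpha_x \rho(f,g).
\]
Each term now carries a clean bilinear symbol after Littlewood-Paley localization, and no low-frequency remainders remain unaccounted.

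For the first term, on the piece where $g$ is localized at frequency $2^k$, the bilinear symbol is $|\xi + \eta|^\alpha - |\eta|^\alpha$ with $|\xi| \ll |\eta| \sim 2^k$. A Taylor expansion of $|\cdot|^\alpha$ about $\eta$ factors this as $|\xi|^{\alpha_2} |\eta|^{\alpha_1} m(\xi, \eta)$ with $m$ a Coifman-Meyer-admissible symbol of order zero. The bilinear H\"ormander-Mikhlin theorem (equivalently, vector-valued Mikhlin combined with Littlewood-Paley square functions and the Fefferman-Stein maximal inequality) then gives
\[
\|D^\alpha_x \pi_f(g) - f D^\alpha_x g\|_{L^p} \lesssim \|D^{\alpha_2}_x f\|_{L^{p_2}} \|D^{\alpha_1}_x g\|_{L^{p_1}},
\]
and the symmetric piece is identical up to relabeling. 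For $D^\alpha_x \rho(f,g) = \sum_{|j-k|\leq 2} D^\alpha_x(P_{2^j}f \cdot P_{2^k}g)$, each summand is frequency-localized at scale $\lesssim 2^k$, so $D^\alpha_x$ splits as $D^{\alpha_1}_x$ on $P_{2^k}g$ and $D^{\alpha_2}_x$ on $P_{2^j}f$ modulo a bounded scalar multiplier; summation in $k$ is then closed by H\"older and the Littlewood-Paley square function theorem.

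The main obstacle is the endpoint case $\alpha_1 = 0$, $p_1 = \infty$, because the Fefferman-Stein vector-valued maximal inequality and the Littlewood-Paley square function characterization both degenerate at $p = \infty$. For this case one handles the paraproduct involving the $L^\infty$ factor directly: using $\|P_{\leq k} g\|_{L^\infty} \lesssim \|g\|_{L^\infty}$ uniformly in $k$ together with the almost-orthogonality of Littlewood-Paley projections applied to the $D^\alpha_x f$ factor yields
\[
\Bigl\|\sum_k (P_{\leq 2^{k-3}} g)\, P_{2^k} D^\alpha_x f\Bigr\|_{L^p} \lesssim \|g\|_{L^\infty} \|D^\alpha_x f\|_{L^p},
\]
and a parallel symbol-calculus argument bounds the commutator $D^\alpha_x \pi_g(f) - g D^\alpha_x f$ by the same quantity, which closes the endpoint.
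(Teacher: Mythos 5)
First, note that the paper does not prove this lemma at all: it is quoted verbatim from Kenig--Ponce--Vega (Theorems A.8 and A.12 of \cite{KPV2}), so you are supplying a proof where the paper offers only a citation. Your paraproduct/Coifman--Meyer route is the standard modern argument for this commutator estimate (and is close in spirit to the original KPV appendix proof, which also runs through Littlewood--Paley decompositions, bilinear multiplier bounds and vector-valued maximal inequalities); it has the additional merit of adapting directly to $\T$, which is the setting the paper actually needs, whereas \cite{KPV2} is stated on $\R$.

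There is, however, a concrete gap in the write-up: the displayed identity does not do what you claim. After splitting $fg=\pi_f(g)+\pi_g(f)+\rho(f,g)$, the subtracted terms $fD^{\alpha}_xg$ and $gD^{\alpha}_xf$ are still \emph{full} products, so the group $D^{\alpha}_x\pi_f(g)-fD^{\alpha}_xg$ does not carry the clean symbol $|\xi+\eta|^{\alpha}-|\eta|^{\alpha}$; that symbol belongs to $D^{\alpha}_x\pi_f(g)-\pi_f(D^{\alpha}_xg)$. The difference
\begin{equation*}
fD^{\alpha}_xg-\pi_f(D^{\alpha}_xg)=\pi_{D^{\alpha}_xg}(f)+\rho(f,D^{\alpha}_xg),
\end{equation*}
together with its mirror image coming from $gD^{\alpha}_xf$, is exactly the collection of ``low-frequency remainders'' that your sentence asserts do not exist. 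These terms are not negligible a priori, because in them the entire derivative $D^{\alpha}_x$ sits on the factor that is supposed to receive only $\alpha_1$ (resp.\ $\alpha_2$) derivatives; they must be handled separately, e.g.\ by Bernstein on the low-frequency factor ($\|P_{\le 2^{k-3}}D^{\alpha_2}_xD^{\alpha_1}_xg\|_{L^{p_1}}\lesssim 2^{k\alpha_2}\|D^{\alpha_1}_xg\|_{L^{p_1}}$, then trading $2^{k\alpha_2}$ for $D^{\alpha_2}_x$ on the high-frequency factor) plus the standard square-function/almost-orthogonality summation, and at the endpoint $\alpha_1=0$, $p_1=\infty$ the piece $\pi_{D^{\alpha}_xf}(g)$ requires the paraproduct bound with an $L^\infty$ (or $BMO$) symbol rather than Fefferman--Stein. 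All of this is routine and closes the argument, but as written the proof silently drops four of the nine paraproduct pieces, so you should either regroup the decomposition correctly (pair each $D^{\alpha}_x\pi$ with the corresponding $\pi(D^{\alpha}_x\cdot)$ and list the leftovers) or estimate the cross terms explicitly. A second, smaller point: in the resonant piece the symbol $|\xi+\eta|^{\alpha}/(|\xi|^{\alpha_2}|\eta|^{\alpha_1})$ is bounded but not Mikhlin-regular near $\xi+\eta=0$, so one should additionally localize the output frequency and use $\alpha>0$ to sum; your sketch gestures at this but it is where the hypothesis $\alpha>0$ actually enters.
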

The next estimate is a frequency localized version of estimate \eqref{leibrule1}
in the same spirit as Lemma 3.2 in \cite{Tao}. It allows to share most of the fractional
derivative in the first term on the right-hand side of \eqref{lemma2.1}.
\begin{lemma} \label{lemma2}
Let $ \alpha, \beta  \ge 0$ and $1<q<\infty$. Then,
\begin{equation} \label{lemma2.1}
\big\|D_x^{\alpha}P_\mp\big(fP_\pm D_x^{\beta}g\big) \big\|_{L^q} \lesssim
\|D_x^{{\alpha}_1}f\|_{L^{q_1}}\|D_x^{{\alpha}_2}g\|_{L^{q_2}},
\end{equation}
with $1<q_i<\infty$, $\frac1{q_1}+\frac1{q_2}=\frac1q$ and $\alpha_1
\ge \alpha$, $\alpha_2 \ge 0$ and $\alpha_1+\alpha_2=\alpha+\beta $.
\end{lemma}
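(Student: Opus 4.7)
My plan exploits the sign constraint imposed by the projectors $P_\pm,P_\mp$: on the Fourier support of $P_\mp(fP_\pm D_x^\beta g)$, the frequency of $f$ necessarily dominates that of $g$, which allows the transfer of up to $\beta$ derivatives from $g$ onto $f$. The argument combines a Littlewood--Paley decomposition with paraproduct-style reasoning, in the spirit of Lemma~3.2 of \cite{Tao}. Since complex conjugation is an $L^q$-isometry swapping $P_+$ and $P_-$, it suffices to treat the case ``$\mp=-$, $\pm=+$''. The decomposition $f=P_+f+P_cf+P_-f$ kills the first two pieces (their product with $P_+D_x^\beta g$ has only nonnegative frequencies and is annihilated by the outer $P_-$), so
\[
D_x^\alpha P_-(fP_+D_x^\beta g)=D_x^\alpha P_-(P_-f\cdot P_+D_x^\beta g).
\]

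Write $P_-f=\sum_N f_N$ and $P_+g=\sum_M g_M$ dyadically. The product $f_N\cdot D_x^\beta g_M$ has Fourier support at $\xi=\xi_1+\xi_2$ with $\xi_1\sim -N<0$ and $\xi_2\sim M>0$; for the outer $P_-$ to be nontrivial we need $\xi<0$, which forces $|\xi_1|>\xi_2$, hence $N\gtrsim M$ and $|\xi|\le|\xi_1|\lesssim N$. On this support one has $|\xi|^\alpha|\xi_2|^\beta\le|\xi_1|^{\alpha_1}|\xi_2|^{\alpha_2}$, since $|\xi|\le|\xi_1|$, $|\xi_2|\le|\xi_1|$ and $\alpha_1-\alpha=\beta-\alpha_2\ge 0$. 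Combining this pointwise symbol bound with Bernstein's inequality at the output scale $\lesssim N$, the $L^{q_i}$-boundedness of $P_\pm$, and H\"older's inequality yields the dyadic estimate
\[
\|D_x^\alpha P_-(f_N P_+ D_x^\beta g_M)\|_{L^q}\;\lesssim\;(M/N)^{\alpha_1-\alpha}\|D_x^{\alpha_1}f_N\|_{L^{q_1}}\|D_x^{\alpha_2}g_M\|_{L^{q_2}}.
\]

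To sum over $N\gtrsim M$, I split into the paraproduct regime $M\ll N$ and the diagonal regime $M\sim N$. In the paraproduct regime, since $\sum_{M\ll N}(M/N)^{\alpha_1-\alpha}\lesssim 1$ and $|D_x^{\alpha_2}g_M|\lesssim \mathcal{M}(D_x^{\alpha_2}g)$ pointwise (Hardy--Littlewood maximal function), each $N$-contribution is bounded pointwise by $|D_x^{\alpha_1}f_N|\cdot\mathcal{M}(D_x^{\alpha_2}g)$ and has Fourier support of size $\sim N$; the Littlewood--Paley square-function characterization of $L^q$, H\"older, and the $L^{q_2}$-boundedness of $\mathcal{M}$ (valid since $q_2>1$) then close the estimate. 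In the diagonal regime $M\sim N$, a dyadic decomposition of the output at scale $K\lesssim N$ produces an extra factor $(K/N)^\alpha$, and almost orthogonality at scale $K$ combined again with the LP square function delivers the same bound.

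The pointwise symbol bound is the cleanest part of the argument. The main technical hurdle lies in the diagonal regime $M\sim N$: no geometric decay in $M/N$ is available there, so one must rely on almost orthogonality at the output scale together with Littlewood--Paley square functions; this is also where the hypothesis $1<q_i<\infty$ becomes essential, since both the maximal function bound and the LP characterization fail at the endpoints.
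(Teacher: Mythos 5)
The paper offers no proof of this lemma at all --- it simply points to Lemma 3.2 of \cite{Mol1} --- so there is no internal argument to compare against; your Littlewood--Paley/paraproduct proof is the standard one and is essentially how the cited lemma is established. The core of your argument is sound: the reduction to $P_-f$ by frequency signs, the constraint $N\gtrsim M$ and $|\xi|\le|\xi_1|$ forced by the outer $P_-$, and the dyadic estimate with the factor $(M/N)^{\alpha_1-\alpha}$ are all correct.

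However, as written the summations do not close at two endpoint parameter values that the lemma's hypotheses allow and that the paper actually uses. First, your paraproduct-regime bound relies on $\sum_{M\ll N}(M/N)^{\alpha_1-\alpha}\lesssim 1$, which diverges when $\alpha_1=\alpha$ (equivalently $\alpha_2=\beta$); this is precisely the configuration used, e.g., in the estimate \eqref{Ru} of $R(u)$ with $\alpha=\alpha_1=\tfrac54+$, $\beta=\alpha_2=0$. The repair is to not decompose $g$ dyadically in this regime: keep $P_+D_x^{\beta}P_{\ll N}g$ as a single block, bound it by $N^{\beta-\alpha_2}\mathcal{M}(P_+D_x^{\alpha_2}g)$ uniformly in $N$, and run the square function in $N$ alone. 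Relatedly, your ``bounded pointwise by $|D_x^{\alpha_1}f_N|\cdot\mathcal{M}(D_x^{\alpha_2}g)$'' is not literally available, since the outer $D_x^{\alpha}\tilde{P}_NP_-$ is a convolution; one needs $|D_x^{\alpha}\tilde{P}_NP_-h|\lesssim N^{\alpha}\mathcal{M}(h)$ together with the Fefferman--Stein vector-valued maximal inequality to pass this inside the square function. Second, in the diagonal regime $M\sim N$ your gain $(K/N)^{\alpha}$ vanishes when $\alpha=0$, and then almost orthogonality at the output scale $K$ does not by itself control the sum over $N\gtrsim K$; the standard remedy there is pointwise Cauchy--Schwarz in $N$, bounding $|\sum_N f_N\,D_x^{\beta}g_{\sim N}|$ by the product of the two square functions $(\sum_N|D_x^{\alpha_1}f_N|^2)^{1/2}$ and $(\sum_N|D_x^{\alpha_2}g_N|^2)^{1/2}$ (up to maximal functions), followed by H\"older and the Littlewood--Paley theorem in each $L^{q_i}$. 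With these two repairs --- both routine but genuinely needed --- your proof is complete.
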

\begin{proof}
See Lemma 3.2 in \cite{Mol1}.
\end{proof}
Finally we state the two  following lemmas. The first one is a direct consequence of the continuous embeddings
 $H^{s+1/4} \hookrightarrow H^{1/2}_4 \hookrightarrow L^\infty $ whereas the proof of the second one (in the real line case) can be found in [\cite{MR}, Lemma 6.1].
\begin{lemma} \label{lemma4}
Let $ s\in [1/2,1]$, $ z\in  L^\infty_T H^{s+\frac{1}{4}} $ and $ v\in \widetilde{L^4_T H^s_4} $ then
\begin{equation}
\| z v \|_{ \widetilde{L^4_T H^s_4}} \lesssim \| z \|_{  L^\infty_T H^{s+\frac{1}{4}+}} \|v\|_{ \widetilde{L^4_T H^s_4}}
\end{equation}
\end{lemma}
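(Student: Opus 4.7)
The plan is to bound
$$\|zv\|_{\widetilde{L^4_T H^s_4}}^2 = \sum_{N} \|P_N(zv)\|_{L^4_T H^s_4}^2$$
by performing a paraproduct decomposition and invoking the Sobolev embeddings $H^{s+1/4}(\T) \hookrightarrow H^{1/2}_4(\T) \hookrightarrow L^\infty(\T)$; the second inclusion needs an $\varepsilon$-loss, which is exactly the $+$ in $H^{s+1/4+}$. Together they give $\|z\|_{L^\infty_T L^\infty_x} \lesssim \|z\|_{L^\infty_T H^{s+1/4+}}$ as soon as $s \geq 1/4$.

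For each dyadic $N$ I split, using the frequency support of the Littlewood--Paley pieces,
$$P_N(zv) = P_N(P_{\ll N}z \cdot P_{\sim N}v) + P_N(P_{\sim N}z \cdot P_{\ll N}v) + \sum_{\substack{N_1 \sim N_2\\N_2 \gtrsim N}}P_N(P_{N_1}z \cdot P_{N_2}v).$$
For the low--high piece the output is localized at frequency $\sim N$, and H\"older yields
$$\|P_N(P_{\ll N}z \cdot P_{\sim N}v)\|_{L^4_T H^s_4} \sim N^s \|P_N(\cdots)\|_{L^4_T L^4} \lesssim \|z\|_{L^\infty_T L^\infty}\|P_{\sim N}v\|_{L^4_T H^s_4},$$
so $\ell^2_N$-summation produces $\|z\|_{L^\infty_T L^\infty}\|v\|_{\widetilde{L^4_T H^s_4}}$. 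For the high--low piece, the one-dimensional Bernstein inequalities $\|P_{\sim N}z\|_{L^4} \lesssim N^{1/4}\|P_{\sim N}z\|_{L^2}$ and $\|P_{N'}v\|_{L^\infty} \lesssim (N')^{1/4}\|P_{N'}v\|_{L^4}$, combined with H\"older, give
$$N^s \|P_N(P_{\sim N}z \cdot P_{\ll N}v)\|_{L^4_T L^4} \lesssim \|P_{\sim N}z\|_{L^\infty_T H^{s+1/4}} \sum_{N' \ll N}(N')^{1/4-s}\|P_{N'}v\|_{L^4_T H^s_4},$$
and Cauchy--Schwarz, using the convergence of $\sum_{N' \in 2^{\N}}(N')^{2(1/4-s)}$ since $s \geq 1/2 > 1/4$, bounds the inner sum by $\|v\|_{\widetilde{L^4_T H^s_4}}$. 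A further $\ell^2_N$-summation then produces $\|z\|_{L^\infty_T H^{s+1/4}}\|v\|_{\widetilde{L^4_T H^s_4}}$.

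For the high--high piece the same H\"older--Bernstein techniques produce, for $N_1 \sim N_2 \gtrsim N$,
$$N^s\|P_N(P_{N_1}z \cdot P_{N_2}v)\|_{L^4_T L^4} \lesssim (N/N_2)^s\,\|P_{\sim N_2}z\|_{L^\infty_T L^\infty}\,\|P_{N_2}v\|_{L^4_T H^s_4}.$$
A standard Schur test on the resulting discrete kernel, whose row and column sums in the dyadic indices are bounded by a geometric series since $s > 0$, then closes the $\ell^2_N$-bound with $\|z\|_{L^\infty_T L^\infty}\|v\|_{\widetilde{L^4_T H^s_4}}$. Summing the three contributions and invoking $\|z\|_{L^\infty_T L^\infty} \lesssim \|z\|_{L^\infty_T H^{s+1/4+}}$ completes the proof. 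The only mildly delicate step is Schur's test in the high--high case; the rest is routine paraproduct bookkeeping combined with Bernstein's inequality.
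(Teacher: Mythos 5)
Your argument is correct in substance and is considerably more detailed than what the paper offers: the authors give no proof at all, merely asserting that the lemma ``is a direct consequence of the continuous embeddings $H^{s+1/4}\hookrightarrow H^{1/2}_4\hookrightarrow L^\infty$.'' Your paraproduct decomposition with H\"older and Bernstein is exactly the bookkeeping needed to turn that one-line justification into a proof compatible with the square-function structure of $\widetilde{L^4_TH^s_4}$, and the low--high and high--high regimes are handled correctly (in the high--high case the Schur test with kernel $(N/N_2)^s\mathbf{1}_{N\lesssim N_2}$ does close, since $s>0$ gives summable rows and columns).

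One step is misstated, though it does not sink the proof. In the high--low regime you arrive at
$\bigl(\sum_N \|P_{\sim N}z\|_{L^\infty_TH^{s+1/4}}^2\bigr)^{1/2}\,\|v\|_{\widetilde{L^4_TH^s_4}}$,
and you claim the first factor is $\|z\|_{L^\infty_TH^{s+1/4}}$. It is not: this quantity is the $\widetilde{L^\infty_TH^{s+1/4}}$ norm, and since the supremum in $t$ may be attained at a different time for each dyadic block, $\sum_N\sup_t\|P_Nz(t)\|^2$ can strictly dominate $\sup_t\sum_N\|P_Nz(t)\|^2$. The fix is to spend the $\e$ of room here: $\sum_N\|P_Nz\|^2_{L^\infty_TH^{s+1/4}}\lesssim\bigl(\sum_NN^{-2\e}\bigr)\|z\|^2_{L^\infty_TH^{s+1/4+\e}}$. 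So the $+$ in $H^{s+1/4+}$ is genuinely consumed by this sup/sum interchange rather than by the embedding $H^{1/2}_4(\T)\hookrightarrow L^\infty(\T)$, which on the circle holds with no loss (as does $\|z\|_{L^\infty}\lesssim\|z\|_{H^{s+1/4}}$ outright, since $s+1/4\ge 3/4>1/2$). With that correction your proof is complete.
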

\begin{lemma} \label{lemma5}
Let $ v_1, v_2 \in \widetilde{L^4 H^{1/2}_4} $ then
\begin{equation}
\|B(v_1,v_2) \|_{L^2} \lesssim \| D_x^{1/2} v_1 \|_{L^4}  \| D_x^{1/2} v_2 \|_{L^4}
\end{equation}
\end{lemma}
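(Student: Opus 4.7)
By the $L^p$-isometry $v\mapsto\overline{v}$, which exchanges $P_+$ and $P_-$ and preserves both $L^2$ and $L^4$ norms, it suffices to bound the single piece
\[
B_+(v_1,v_2):=-i\,\partial_x^{-1}\bigl(P_+ v_{1,x}\cdot P_+ v_{2,x}\bigr);
\]
the analogous estimate for the $P_-$ piece is identical. Since both factors $P_+ v_{i,x}$ have Fourier support in $\{\xi_i\ge 1\}$, their product is supported in $\{\xi\ge 2\}$, so $\partial_x^{-1}$ acts as a bounded Fourier multiplier, and a direct computation yields, for $\xi\ge 2$,
\[
\widehat{B_+}(\xi)=\frac{1}{\xi}\sum_{\substack{\xi_1+\xi_2=\xi\\ \xi_1,\xi_2\ge 1}}\xi_1\xi_2\,\widehat{v_1}(\xi_1)\widehat{v_2}(\xi_2).
\]

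The algebraic key is AM--GM: since $\xi=\xi_1+\xi_2\ge 2\sqrt{\xi_1\xi_2}$, the pointwise bound $\xi_1\xi_2/\xi\le \tfrac{1}{2}\sqrt{\xi_1\xi_2}$ holds, and therefore $B_+$ is essentially a bilinear Fourier multiplier with \emph{bounded} symbol $m(\xi_1,\xi_2)=\sqrt{\xi_1\xi_2}/(\xi_1+\xi_2)$ acting on the pair $(D_x^{1/2}P_+v_1,\,D_x^{1/2}P_+v_2)$: the two derivatives furnished by the two $P_+v_{i,x}$ factors are traded through $\partial_x^{-1}$ for one half-derivative on each input. To exploit this, I would perform a Littlewood--Paley decomposition $v_i=\sum_{N_i}P_{N_i}v_i$; for each dyadic pair $(N_1,N_2)$ the output Fourier support is localized near $\max(N_1,N_2)$, and the symbol localized to $(N_1,N_2)$ is bounded by $\sqrt{\min(N_1,N_2)/\max(N_1,N_2)}$. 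Combining this localized $L^\infty$-control of the symbol with H\"older's inequality gives
\[
\|B_+(P_{N_1}^+ v_1,\,P_{N_2}^+ v_2)\|_{L^2}\lesssim \sqrt{\tfrac{\min(N_1,N_2)}{\max(N_1,N_2)}}\,\|D_x^{1/2}P_{N_1}^+ v_1\|_{L^4}\|D_x^{1/2}P_{N_2}^+ v_2\|_{L^4}.
\]

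To close the estimate I would sum over dyadic pairs: first square-sum in $L^2$ over the output dyadic blocks (which, for distinct values of $\max(N_1,N_2)$, are essentially Fourier-disjoint), and then apply a Schur-test summation in the remaining dyadic index, using that $\sqrt{\min(N,M)/\max(N,M)}$ is Schur-summable on the dyadic lattice. Combined with the Littlewood--Paley square-function characterization of $L^4$ and the $L^4$-boundedness of $P_\pm$, this produces the claimed inequality. I expect the principal obstacle to be carrying out this last summation cleanly so as to recover the \emph{pure} $L^4$ norm on the right-hand side rather than its square-function variant; for this I would follow the strategy of the real-line proof in \cite{MR}, Lemma~6.1, and adapt it to the periodic setting.
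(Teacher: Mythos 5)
The paper offers no proof of this lemma at all --- it simply defers to the real-line result [\cite{MR}, Lemma 6.1] --- so your sketch is the natural dyadic adaptation, and its skeleton is sound: the conjugation reduction to $B_+$, the fact that a product of two $P_+$-inputs has no low-frequency output (so $\partial_x^{-1}$ costs exactly $\max(N_1,N_2)^{-1}$ and outputs with distinct $\max(N_1,N_2)$ are almost orthogonal, \emph{including} in the diagonal case $N_1\sim N_2$), and the localized bound with the $\sqrt{\min/\max}$ gain via H\"older $L^4\times L^4\to L^2$. One caution: the AM--GM observation that the symbol $\sqrt{\xi_1\xi_2}/(\xi_1+\xi_2)$ is bounded proves nothing by itself (it is not a Coifman--Meyer symbol near the axes, precisely where the $\sqrt{\min/\max}$ smallness lives), so the Littlewood--Paley decomposition is the actual proof, not a convenience.

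The step you flag is the one genuine issue, and as described it does not close. Summing the localized bounds by Schur/triangle inequality yields $\bigl(\sum_N\|D_x^{1/2}P_Nv_i\|_{L^4}^2\bigr)^{1/2}$, i.e.\ the $\widetilde{L^4}$ norms, on the right-hand side; since $4>2$ the square-function theorem gives $\|f\|_{L^4}\lesssim\|f\|_{\widetilde{L^4}}$ and \emph{not} the reverse, so you cannot convert this afterwards into the stated pure $L^4$ bound --- the embedding points the wrong way. Two acceptable completions: (i) keep the $\widetilde{L^4}$ norms, which is what your summation actually produces and which suffices for every use of the lemma in this paper, since the hypotheses and all a priori bounds on $u$ live in $\widetilde{L^4_TH^{1/2}_4}$ anyway; or (ii) to get the stated inequality, run the summation as a genuine paraproduct estimate: for $N_1\le N_2$ sum over $N_1$ first, dominating $N_2^{-1/2}\partial_xP_{\le N_2}v_1$ pointwise by the Hardy--Littlewood maximal function $M(D_x^{1/2}v_1)$, then use almost orthogonality in the output frequency $\sim N_2$ to square-sum, and finish with H\"older, $\|M(D_x^{1/2}v_1)\|_{L^4}\lesssim\|D_x^{1/2}v_1\|_{L^4}$, and the square-function characterization of $\|D_x^{1/2}v_2\|_{L^4}$. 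Either route is routine; ``Schur plus square functions'' alone, as written, lands on the wrong side of the $\widetilde{L^4}\hookrightarrow L^4$ embedding.
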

Let $ k\in \Z^* $ with $ |k|\le 10 $. A direct computation gives
\begin{align}
\partial_x
(e^{ikF})&=kie^{ikF}(u^2-P_c(u^2)), \label{de1}
\end{align}
Next by gathering  the obvious estimates  $\|e^{ikF}\|_{L_T^\infty L_x^2}\les 1$ and  $\|\partial_x
(e^{ikF})\|_{L^{\infty}_TL^2_x}\les P_c(u^2)+\|u\|^2_{L^{\infty}_TL^4_x}$,  we get
\begin{align}
\|e^{ikF}\|_{L^{\infty}_TH^1}&\les 1+\|u\|^2_{L^{\infty}_TH^\half1}.
\label{24}
\end{align}
On the other hand, by Lemma \ref{lem:mul}, we have for any $ s\in [1/2,1] $,
\begin{align*}
\|\partial_x (e^{ikF})\|_{L^{\infty}_TH^{s-}} &\les
\norm{e^{ikF}}_{L_T^\infty
H^1}\norm{u^2-P_c(u^2)}_{L^{\infty}_TH^{s-}}\les
(1+\|u\|_{L^{\infty}_TH^{\frac{1}{2}}}^2) \| u\|_{L^{\infty}_T
H^{\frac{1}{2}}} \|u\|_{L^{\infty}_T H^s}\; .
\end{align*}
Gathering the above estimates leads for any $ s\in [1/2,1]$  to
\begin{align}
\|e^{ikF}\|_{L^{\infty}_TH^{s+1-}} & \les
(1+\|u\|_{L^{\infty}_TH^{\frac{1}{2}}}^2) \Bigl(1+\| u\|_{L^{\infty}_T H^{\frac{1}{2}}} \|u\|_{L^{\infty}_T H^s}\Bigr) \label{24b}
\end{align}
and, in view of \eqref{eq:Ru} and  Lemma \ref{lemma2},  it holds
\begin{align}
\| R(u)\|_{L^\infty_T H^{\frac{5}{4}+}} & \lesssim \|e^{iF} \|_{L^\infty_T H^{{5\over 4}+}_{2+}} \|u\|_{L^\infty_T L^{\infty -}_x} \\
& \lesssim \|e^{-iF}\|_{L^{\infty}_TH^{\frac{3}{2}-}} \| u\|_{L^\infty_T H^{\frac{1}{2}}}\nonumber \\
 &\lesssim  (1+\| u\|_{L^\infty_T H^{\frac{1}{2}}}^4) \| u\|_{L^\infty_T H^{\frac{1}{2}}} \; .\label{Ru}
\end{align}
Now, for
$s\in [1/2,1]$,  according to \eqref{uu}, \eqref{24b}-\eqref{Ru} and Lemma \ref{lemma4}?we easily  get
\begin{align*}
\|P_+u\|_{\widetilde{L^{4}_TH^{s}_4}} &\lesssim
\|e^{iF}v\|_{\widetilde{L^{4}_TH^{s}_4}}
+\|e^{iF}R(u)\|_{\widetilde{L^{4}_TH^{s}_4}}
\\
&\lesssim
\|e^{iF}\|_{L^{\infty}_tH^{3/2-}}\|v\|_{\widetilde{L^{4}_TH^{s}_4}}
+T^\frac{1}{4}\|e^{iF}\|_{L^{\infty}_tH^{3/2-}}\|R(u)\|_{L^\infty_tH^{{5\over 4}+}} \\
& \lesssim    (1+\| u\|_{L^\infty_T H^{\frac{1}{2}}}^4)\Bigl( \|v\|_{Y^s_T}+T^\frac{1}{4} (1+\| u\|_{L^\infty_T H^{\frac{1}{2}}}^4)\| u\|_{L^\infty_T H^{\frac{1}{2}}} \Bigr)
\end{align*}
Estimate \eqref{eq:lemestu2} follows by using that $ u $ is real valued and  the conservation  of the mean-value by  \eqref{eq:mBO2}.\\
Next, in order to get a better estimate of
$\|u\|_{L^\infty_TH^s},\ s\in [\half1,1]$, we split $u$ into
a  low frequency and a high frequency part. For low frequency,
we use the equation for $u$, while for high frequency, we use
$P_+u=e^{iF}v-e^{iF}R(u)$. For any $N=2^k\in \N$, and
$s\in[\half1,1]$, we have
\begin{align*}
\|u\|_{L^\infty_TH^s}\lesssim\|P_{\leq k}u\|_{L^\infty_TH^s}
+2 \|P_{\geq k}^+u\|_{L^\infty_TH^s}.
\end{align*}
By the equation of $u$, we have
\begin{align*}
P_{\leq k}u=W(t)P_{\leq k}u_0 +\frac{1}{3}\int_0^t
W(t-\tau)P_{\leq k}\partial_x(u^3)(\tau)d\tau,
\end{align*}
that leads to
\begin{align*}
\|P_{\leq k} u\|_{L^\infty_TH^s}\les \|u_0\|_{H^s}
+T2^{2k}\|u\|_{L^\infty_TH^{1/2}}^3.
\end{align*}

To estimate the term $\|P_{\geq k}^+u\|_{L^\infty_TH^s}$, we use
$P_+u=e^{iF}v-e^{iF}R(u)$. By  \eqref{24} -\eqref{Ru} we have
\begin{eqnarray*}
\|P_{\geq k}^+[e^{iF}R(u)]\|_{L^\infty_TH^s} & \les& N^{-1/4}Ê\| e^{-iF} R(u) \|_{L^\infty_T H^{5/4}} \\
& \les&N^{-\frac{1}{4}}(1+\|u\|_{L^{\infty}_TH^{\frac{1}{2}}}^8) \|u\|_{L^\infty_T H^{1/2}} \; .
\end{eqnarray*}
It remains to estimate $\|P_{\geq k}^+[e^{iF}v]\|_{H^{s}}$. By Lemma \ref{lem:mul} we have
\begin{align*}
\|P_{\geq k}^+[e^{iF}v]\|_{L^\infty_tH^s}\les&\|P_{\geq k}^+[P_{\leq
k-5}(e^{iF})v]\|_{L^\infty_tH^s}+\|P_{\geq k}^+[P_{>
k-5}(e^{iF})v]\|_{L^\infty_tH^s}\\
\les& \norm{e^{iF}}_{L^\infty_T L^\infty_x} \norm{v}_{L^\infty_tH^s}+\norm{v}_{L^\infty_tH^s}\norm{P_{\geq
k-5}(e^{iF})}_{L^\infty_tH^{1}}\\
\les&\norm{v}_{L^\infty_tH^s} \Bigl( 1+N^{-1/4}(1+\|u\|_{L^{\infty}_TH^{\frac{1}{2}}}^4)\Bigr) .
\end{align*}
Then $(\ref{eq:lemestu3})$ holds.
For the difference estimates \eqref{dif1}-\eqref{dif2}, the proofs are  similar.
We only need to observe that by the mean-value theorem,
$ |e^{ikF(u_1)}-e^{ikF(u_2)}|\le | k (P_{\neq c} (u_1^2-u_2^2) |$ and thus
\begin{equation} \label{za1}
\|e^{ikF(u_1)}-e^{ikF(u_2)}\|_{L_T^\infty L_x^2}\les
\norm{u_1-u_2}_{L_T^\infty H^{1/2}}(\norm{u_1}_{L_T^\infty
H^{1/2}}+\norm{u_2}_{L_T^\infty H^{1/2}}) \;
\end{equation}
and
\begin{align}
\|\partial_x(e^{ikF(u_1)}-e^{ikF(u_2)})\|_{L_T^\infty L_x^2} & \les
\norm{u_1-u_2}_{L_T^\infty H^{1/2}}(\norm{u_1}_{L_T^\infty
H^{1/2}}+\norm{u_2}_{L_T^\infty H^{1/2}}) \nonumber \\
&  + \| P_{\neq c} (u_1^2) (e^{ikF(u_1)}-e^{ikF(u_2)})\|_{L_T^\infty L_x^2}\nonumber \\
 \les &
\norm{u_1-u_2}_{L_T^\infty H^{1/2}}(\norm{u_1}_{L_T^\infty
H^{1/2}}+\norm{u_2}_{L_T^\infty H^{1/2}})^3 \; .\label{za2}
\end{align}
\subsection{Proof of Theorem \ref{thm}}

In this subsection, we prove Theorem \ref{thm}. We will rely on
the results obtained in \cite{MRmBO}:
\begin{lemma}[\cite{MRmBO}]\label{lem:energyres}
The mBO equation \eqref{eq:mBO} is locally well-posed in $H^s$ for
$s\geq 1$. Moreover, the minimal  length of the interval of existence  is
determined by $\norm{u_0}_{H^1}$.
\end{lemma}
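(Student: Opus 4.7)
The plan is to prove the cited lemma by combining the gauge transform of Section~2 with Strichartz-type $L^4_{tx}$ bounds and energy estimates, which at the $H^1$ regularity level is substantially lighter than the full $X^{s,b}$ machinery needed for the $H^{1/2}$ theorem. Since the statement concerns $s\geq 1$, I would focus first on $H^1$ well-posedness with existence time depending only on $\|u_0\|_{H^1}$, then propagate higher regularity on the same interval.

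First I would reduce to the Wick-ordered equation \eqref{eq:mBO2} via the translation $T$, which is a homeomorphism of $C([-T,T];H^s)$ for every $s\geq 0$, and then apply the gauge transform $v = \mathcal{G}(u) = P_+(e^{-iF}u)$, producing the system \eqref{eq:gauge}--\eqref{eq:sys1}. The key $H^1$ a priori estimate for $v$ is obtained by pairing \eqref{eq:sys1} with $J_x^2 v$ and controlling each right-hand side using \eqref{24}, \eqref{Ru}, the fact that $H^1(\T)$ is a Banach algebra embedded in $L^\infty$, and the Strichartz-type bound \eqref{str} (i.e.\ $\|w\|_{L^4_{tx}}\lesssim \|w\|_{X^{0,3/8}}$) to handle products of the form $\partial_x(e^{-iF}u^2)\cdot \partial_x P_-(e^{-iF}\bar v)$. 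The frequency-localized Leibniz rule of Lemma~\ref{lemma2} is used on $N^1$ to redistribute the derivatives so that no apparent loss occurs.

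Having bounded $v$ in $H^1$, I would recover $u$ from $v$ through the identities \eqref{uu}--\eqref{Pu} together with the reality of $u$, obtaining a closed nonlinear inequality
\[
\|u\|_{L^\infty_T H^1} \lesssim \|u_0\|_{H^1} + T\, \Phi\bigl(\|u\|_{L^\infty_T H^1},\|v\|_{L^\infty_T H^1}\bigr)
\]
with $\Phi$ polynomial, which yields a uniform bound on a time $T = T(\|u_0\|_{H^1})$. Difference estimates for $(u_1,v_1)$ versus $(u_2,v_2)$ along the same lines give uniqueness and continuity of the solution map; a contraction mapping or regularization-plus-compactness argument then produces an actual solution $u \in C([-T,T];H^1)$. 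For $s > 1$, a standard persistence-of-regularity argument works on the same interval: one repeats the computation at level $H^s$ with the $H^1$ norm controlling the variable coefficients, so the minimal existence time does not shrink and depends only on $\|u_0\|_{H^1}$.

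The hardest step will be closing the $H^1$ energy estimate without loss of derivative, the main obstruction being the terms $N^0$ and $N^1$ of \eqref{eq:sys1} which contain a $\partial_x P_-(e^{-iF}\bar v)$ and, in the case of $N^1$, an additional outer $\partial_x$. A direct $L^2$ pairing would require either one factor in $L^\infty$, not available from $H^1$ control of $\partial_x(e^{-iF}u^2)$, or a Strichartz gain in integrability. The sign mismatch enforced by the projectors $P_+/P_-$ is crucial: it forces the output frequency to be comparable to the larger input one, so that Lemma~\ref{lemma2} can redistribute the derivatives and the $L^4_{tx}$ bound absorb the leftover factor. Secondary difficulties come from $B(u,u)$, handled by Lemma~\ref{lemma5}, and from the commutator $R(u)$, which enjoys the $H^{5/4+}$-regularity gain recorded in \eqref{Ru}.
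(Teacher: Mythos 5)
This lemma is not proved in the paper at all: it is imported verbatim from the reference \cite{MRmBO}, and the introduction explicitly describes the method of that reference as ``Strichartz norm and gauge transform.'' Your outline is therefore essentially a reconstruction of the cited proof rather than an alternative to anything in the present paper: reduction to the Wick-ordered equation, the gauge $v=P_+(e^{-iF}u)$, control of $v$ at the $H^1$ level using the $P_+/P_-$ frequency separation and Lemma \ref{lemma2}, recovery of $u$ via \eqref{uu}--\eqref{Pu}, difference estimates, and persistence of regularity. That is the right skeleton and matches what \cite{MRmBO} actually does.

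One step of your plan as written would not close, however. You propose to obtain the $H^1$ bound for $v$ by pairing \eqref{eq:sys1} with $J_x^2 v$, i.e.\ by a pure energy identity, while simultaneously claiming to use the $L^4_{tx}$ bound \eqref{str}. These two devices do not combine as stated: an energy pairing only produces $L^\infty_T H^1$ information and forces you to put $\partial_x(e^{-iF}u^2)$ or $\partial_x P_-(e^{-iF}\bar v)$ into $L^\infty_x$ or into an $L^4_{tx}$ norm that the energy method itself never generates; and the embedding \eqref{str} is an $X^{0,3/8}$ (Bourgain-space) estimate, which reintroduces exactly the machinery you said you were avoiding. The way to make the argument work at the $H^1$ level without $X^{s,b}$ spaces --- and the way \cite{MRmBO} does it --- is to run a Duhamel/contraction (or a priori estimate on smooth solutions) in the mixed space $L^\infty_T H^1 \cap \widetilde{L^4_T H^1_4}$, using the linear periodic Strichartz estimate \eqref{to3} to control the $L^4$ component of both the free and the Duhamel terms, and H\"older in time to extract the small positive power of $T$. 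With that replacement your sketch is sound; the remaining points (Lemma \ref{lemma2} to redistribute the derivative across the $P_+/P_-$ mismatch in $N^0,N^1$, Lemma \ref{lemma5} for $B(u,u)$, the $H^{5/4+}$ gain \eqref{Ru} for the commutator, and the standard persistence argument for $s>1$) are all correctly identified.
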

Now, fixing any $u_0\in H^{1/2}(\T)$, we choose $\{u_{0,n}\}\subset
C^{\infty}(\T)$, real-valued, such that $u_{0,n}\to u_0$ in
$H^{1/2}$. We denote by $u_n$ the solution of mBO emanating from
$u_{0,n}$ given by Lemma \ref{lem:energyres} and
$v_n=P_+(e^{-iF(u_n)}u_n)$.

{\bf Step 1.} A priori estimate: we show that there exists
$T=T(\norm{u_0}_{H^{1/2}})>0$ such that $u_n$ exists on $(-T,T)$.

It suffices to show that there exists a
$T=T(\norm{u_0}_{H^{1/2}})>0$, such that for any $n\in \N$, if
$|t|\leq T$ and $u_n(t)$ exists, then
\begin{align}\label{eq:apriori}
\norm{u_n(t)}_{H^s}\leq C(\norm{u_{0,n}}_{H^s}),\quad 1/2\leq s\leq
1.
\end{align}
First we show \eqref{eq:apriori} for $s=1/2$. We may assume
$\norm{u_{0,n}}_{H^{1/2}}\leq 2\norm{u_{0}}_{H^{1/2}}$, $\forall
n\in \N$. Define the quantity $\norm{(u,v)}_{F^s_T}$ by
\[\norm{(u,v)}_{F^s_T}:=\norm{u}_{L_T^\infty H^{s}}+\norm{v}_{X^{s,\frac{1}{2}}_T}.\]
Applying Proposition \ref{lem:estu}-\ref{lem:estv} to $u_n,v_n$
(taking $s=1/2$), we get
\begin{align*}
\norm{(u_n,v_n)}_{F^{1/2}_T}\les&
(1+\|u_{0}\|_{H^{1/2}}^8) \|u_{0}\|_{H^{1/2}} +(T^{1/4}N^2+N^{-1/4})\norm{(u_n,v_n)}_{F^{1/2}_T}^9\\
& + T^\nu \Bigl(1+ \norm{(u_n,v_n)}_{F^{1/2}_T}^{k} \Bigr) \norm{(u_n,v_n)}_{F^{1/2}_T}\; ,
\end{align*}
for some $ \nu>0 $ and $ k\in \N^* $ and for any $ N\ge 1$ and $ 0<T<1$.
Therefore taking $ N $ large enough, we infer that there exits $
T=T(\norm{u_0}_{H^{1/2}})>0$ such that \eqref{eq:apriori} holds for $ s=1/2$.
Now, for  $1/2<s\leq 1$, we have
\begin{align*}
\norm{(u_n,v_n)}_{F^{s}_T}\les&
(1+\|u_{0}\|_{H^{1/2}}^8) \|u_{0,n}\|_{H^{s}} +(T^{1/4}N^2+N^{-1/4})\norm{(u_n,v_n)}_{F^{1/2}_T}^8\norm{(u_n,v_n)}_{F^{s}_T} \\
& + T^\nu \Bigl(1+ \norm{(u_n,v_n)}_{F^{1/2}_T}^{k} \Bigr) \norm{(u_n,v_n)}_{F^{s}_T}\; ,
\end{align*}
which yields   \eqref{eq:apriori} for some $T=T(\norm{u_0}_{H^{1/2}})>0$ smaller if necessarily. This completes the Step 1.

{\bf Step 2.} Next, we will show that $u_n$ is a Cauchy sequence in
$C([-T,T];H^{1/2})$.

Applying the difference estimates in Proposition
\ref{lem:estu}-\ref{lem:estv} to $(u_n,v_n)$, arguing as in Step 1,
we get
\begin{align}
\|(u_n-u_m,v_n-v_m)\|_{F_T^{1/2}}\les\|u_{0,n}-u_{0,m}\|_{H^{\frac{1}{2}}}. \label{qq}
\end{align}
Thus, ${(u_n,v_n)}$ is a Cauchy sequence, and there exists $u\in
C([-T,T];H^{1/2})$ such that
$\|u_n-u\|_{L^{\infty}_TH^{\frac{1}{2}}}\rightarrow 0,\ n\rightarrow
\infty$. By classical compactness  arguments, it is easy to check that $ u $  solves the mBO equation. Moreover, in view of \eqref{qq}  it is the only solution in the class $ u\in L^\infty_T H^{1/2} $ with $ P_+(e^{iF(u)}) \in X^{\frac{1}{2},\frac{1}{2}}_T $ and the solution-map $ u_0 \mapsto u $ is continuous from $ H^{\frac{1}{2}}(\T) $ into $C([-T,T];H^{1/2})$.
 At last, in the defocusing case using the conservation
of $H^{\frac{1}{2}}$ norm of $u$, we get that  $u $ is global in time.

\section{Proof of the estimates on $v$}
In this section, we prove Proposition \ref{lem:estv}. We will work
on the equation \eqref{eq:sys1}. By Lemma
\ref{lem:linear} and the trivial embedding $ L^2_T H^s \hookrightarrow X_T^{s,-\half 1+}$, we infer that
\begin{align}
\norm{v}_{Y_T^s} \les& \norm{v(0)}_{H^s}+T^\nu \Bigl(
\norm{G(u)}_{L_T^2H^s}+\norm{N^0(u,v)}_{X_T^{s,-\half 1+}}
+\norm{N^1(u,v)}_{X_T^{s,-\half 1+}}\nonumber\\
&+\norm{P_+[-2ie^{-iF}uB(u,u)]}_{X_T^{s,-\half 1+}}\Bigr)
\label{est}
\end{align}
 for some  $ \nu>0 $.
Then to prove Proposition \ref{lem:estv}, we will estimate the terms of the
right-hand side one by one.
\subsection{Estimate on $ G(u)$}
\begin{lemma}
Let $ 1/2\le s\le 1 $, $0<T\leq 1$ and  $u_i\in C([-T,T]:H^{s})\cap \widetilde{L_T^4H_4^{s}}$, $ i=1,2$,
be two solutions to  \eqref{eq:mBO2} with initial data $ u_{i,0}$. Then for $u=u_i$ we have
\begin{align*}
\norm{\gau(u_{i,0})}_{H^s}\les& (1+\norm{u_{i,0}}_{H^{1/2}}^4)\norm{u_{i,0}}_{H^s}\\
\norm{G(u_i)}_{L_T^2H^s}\les& (1+\norm{u_i}_{L_T^\infty
H^{1/2}\cap L_T^4H_4^{1/2}}^{12}) \norm{u_i}_{L_T^\infty H^{s}\cap
L_T^4H_4^{s}}\; .
\end{align*}
Moreover, it holds
\begin{align*}
\norm{\gau(v_{1,0})-\gau(v_{2,0})}_{H^{1/2}}\les& \norm{u_{1,0}-u_{2,0}}_{H^{1/2}}\prod_{i=1}^2(1+\norm{u_{i,0}}_{H^{1/2}}^4)\\
\norm{G(u_1)-G(u_2)}_{L_T^2H^{1/2}}\les&
\norm{u_1-u_2}_{L_T^\infty H^{1/2}\cap
L_T^4H_4^{1/2}}\prod_{i=1}^2(1+\norm{u_i}_{L_T^\infty H^{1/2}\cap
L_T^4H_4^{1/2}}^{12}).
\end{align*}
where the gauge transformation $ \gau $ and  the function $  G $ are  defined respectively in \eqref{eq:gauge} and \eqref{defG}.
\end{lemma}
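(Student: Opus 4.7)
The plan is to decompose $G(u)$ into its component pieces as displayed in \eqref{defG} and estimate each one in $L^2_T H^s$, drawing on the Sobolev product law (Lemma \ref{lem:mul}), the frequency-localized commutator estimate (Lemma \ref{lemma2}), the $H^{s+1-}$ bound \eqref{24b} on $e^{-iF}$, and the $H^{5/4+}$ bound \eqref{Ru} on $R(u)$. The estimate for $\gau(u_0) = P_+(e^{-iF(u_0)} u_0)$ comes first and is essentially one application of Lemma \ref{lem:mul}(b) together with \eqref{24b}, yielding
$$
\|e^{-iF(u_0)} u_0\|_{H^s} \lesssim \|e^{-iF(u_0)}\|_{L^\infty}\|u_0\|_{H^s} + \|e^{-iF(u_0)}\|_{H^{s+}}\|u_0\|_{L^\infty} \lesssim (1 + \|u_0\|_{H^{1/2}}^4)\|u_0\|_{H^s},
$$
and the corresponding difference estimate follows analogously using \eqref{za1}--\eqref{za2}.

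For the main bound, I split $G(u)$ into three families. The terms carrying a $P_c$ projection are the easiest, since $|P_c(u\Hl u_x)| \lesssim \|u\|_{H^{1/2}}^2$ and $|P_c((P_{\neq c}u^2)^2)| \lesssim \|u\|_{H^{1/2}}^4$ are scalar, so after multiplying by $e^{-iF} u$ (or by the bracketed expression in the last line of \eqref{defG}) and applying Lemma \ref{lem:mul} together with \eqref{24b}, one lands in $L^\infty_T H^s$, hence in $L^2_T H^s$ on $[-T,T]$. Next, the term $P_+(e^{-iF} u^2 \partial_x P_-(e^{-iF} \overline{R(u)}))$ is handled using the gain of one full derivative from \eqref{Ru}: $\partial_x P_-(e^{-iF}\overline{R(u)})$ sits in $L^\infty_T H^{1/4+}$, which combines via Lemma \ref{lem:mul} with the $H^{s+}$ bound on $e^{-iF} u^2$ to land in $L^\infty_T H^s$. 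The genuinely delicate contributions are the terms of the form $\partial_x P_+(\partial_x^{-1} P_+(e^{-iF}) P_- u_x)$ and $\partial_x P_+(e^{-iF}\partial_x P_-(e^{-iF}\overline{R(u)}))$. For each I apply Lemma \ref{lemma2} with $\alpha_1 + \alpha_2 = s + 1$ and $\alpha_1 \ge s$: the projector mismatch lets me transfer derivatives from $u_x$ (resp.\ $R(u)$) onto the gauge factor $e^{-iF}$, which is still in $H^{s+1-}$ by \eqref{24b}, while the remaining factor is paired via Hölder in space with $u \in \widetilde{L^4_T H^{1/2}_4}$ or with $R(u) \in L^\infty_T H^{5/4+}$. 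A Hölder bound in time (an $L^\infty_T$ or $L^4_T$ factor against an $L^4_T$ factor) then yields $L^2_T H^s$ with a polynomial prefactor in $\|u\|_{L^\infty_T H^{1/2} \cap \widetilde{L^4_T H^{1/2}_4}}$, whose worst-case degree is recorded as $12$ by counting up to three $e^{-iF}$ factors (each of $H^{s+1-}$-norm $\lesssim (1+\|u\|_{H^{1/2}}^4)$) together with the additional $u$ and $R(u)$ factors.

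The main obstacle is the apparent loss of one derivative in $\partial_x P_+(\partial_x^{-1} P_+(e^{-iF}) P_- u_x)$: a naive product-rule bound fails because there is no extra smoothness available on $u$ beyond $H^{1/2}$. The gain comes from the forced high-low frequency interaction imposed by the sign mismatch of the projectors $P_+$ (outside) and $P_-$ (on $u_x$), which is exactly what Lemma \ref{lemma2} quantifies. Once this is in hand, the difference estimates at $s=1/2$ follow by a standard telescoping expansion: one replaces factors one at a time, using \eqref{za1}--\eqref{za2} for differences of $e^{-iF}$ and the analogous decomposition $R(u_1) - R(u_2) = [P_+, e^{-iF(u_1)}-e^{-iF(u_2)}]u_1 + [P_+,e^{-iF(u_2)}](u_1-u_2)$ for differences of $R$, then applying the previous bounds piecewise. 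Each resulting term carries exactly one difference factor multiplied by a polynomial in $\|u_i\|_{L^\infty_T H^{1/2} \cap \widetilde{L^4_T H^{1/2}_4}}$ of degree at most $12$.
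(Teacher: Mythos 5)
Your decomposition and toolkit coincide with the paper's: the gauge estimate via \eqref{24b} and the product law, the loss-of-derivative terms $\partial_x P_+(\partial_x^{-1}P_+(e^{-iF})P_-u_x)$ handled by the projector-mismatch Lemma \ref{lemma2}, the $P_c$ terms treated as scalars, and telescoping with \eqref{za1}--\eqref{za2} for the differences. The one step that does not close as you state it is the term $P_+\bigl(e^{-iF}u^2\,\partial_x P_-(e^{-iF}\overline{R(u)})\bigr)$: the factor $\partial_x P_-(e^{-iF}\overline{R(u)})$ only lies in $L^\infty_T H^{1/4+}$, and Lemma \ref{lem:mul} cannot place a product of an $H^{1/4+}$ function with an $H^{s+}$ function into $H^s$ once $s\ge 1/2$ (part (a) requires $s\le\min(s_1,s_2)$ and part (b) requires both factors in $H^s\cap L^\infty$); moreover $e^{-iF}u^2$ is only in $H^{s-}$ at the endpoint $s=1/2$, not $H^{s+}$. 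The paper instead estimates this term with the $L^4_x$-based fractional Leibniz rule (Lemma \ref{leibrule}), bounding it by
\begin{equation*}
\|e^{-iF}\|_{H^{3/2-}}\,\|u^2\|_{H^s_4}\,\|\partial_x P_-(e^{-iF}\overline{R(u)})\|_{H^{0+}_4},
\end{equation*}
using the Sobolev embedding $H^{1/4+}\hookrightarrow H^{0+}_4$; this is precisely why the $L^4_TH^s_4$ norm of $u$ appears on the right-hand side of the lemma. Since this is the same $L^4$-based device you already invoke for the other delicate terms, the repair is immediate, but as written that step would fail.
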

\begin{proof}
The estimates on $v_{i,0}=\gau(u_{i,0})$ and its difference are similar to the
estimates of $u$ in the proof of Proposition \ref{lem:estu}. The estimates on $ G$ follow from
 the definition \eqref{defG} of $G(u)$,  Lemma \ref{lem:mul} and Lemma \ref{lemma2}. For instance we have
 \begin{align*}
\|e^{-iF} P_{+}(u^2 \partial_x P_{-} (e^{-iF} \overline{R(u))}
\|_{H^s} \lesssim &   \|e^{-iF} \|_{H^{\frac{3}{2}-}} \|
u^2\|_{H^s_4}
 \|\partial_x P_{-} (e^{-iF} \overline{R(u)} )\|_{H^{0+}_4} \\
\lesssim &   \|e^{-iF} \|_{H^{\frac{3}{2}-}}^2  \|u\|_{H^s_4} \|u\|_{H^{1\over 2}_4} \|R(u)\|_{H^{{5\over 4}+}} \\
\lesssim &  \|u\|_{H^s_4} \|u\|_{H^{1\over 2}_4}\|u\|_{H^{1\over 2}}
(1+\|u\|_{H^{1/2}}^4)^3
\end{align*}
and
$$
 \Bigr\|\partial_x P_+ \Bigl(\partial_x^{-1} P_+ (e^{-iF} ) P_{-} u_x\Bigr) \Bigl\|_{H^s}  \lesssim
  \|e^{-iF}Ê\|_{H^1_{4}} \|u\|_{H^s_4}Ê\lesssim  (1+\|u\|_{H^{1/2}}^4)  \|u\|_{H^s_4}
 $$
\end{proof}
\subsection{Estimates on  suitable  extensions of $ u $ and  $ e^{-iF(u)}$ .}
Before proving the main multilinear estimates, we need to prove estimates on suitable extensions of $ u $ and  $ e^{-iF(u)}$.
\begin{lemma}\label{lemma42}
Let $1/2\le  s\le 1$, $0<T\leq 1$ and  $u_1,u_2\in C([-T,T]:H^{s})\cap \widetilde{L_T^4H_4^{s}}$
be two solutions to  \eqref{eq:mBO2}. Then for $i=1,2$
\begin{align}
\|u_i\|_{(X^{s-1,1} \cap L^\infty_t H^s\cap \widetilde{L^4_t H^s_4})_T}&\le
(1+\|u_i\|_{L^{\infty}_TH^{\frac{1}{2}}_x}^2) \|u_i\|_{L^\infty_T H^s \cap \widetilde{L^{4}_TH^{s}_4}}. \label{tt}
\end{align}
Moreover, we have
\begin{align}
\|u_1-u_2\|_{(X^{-\frac{1}{2},1}\cap L^\infty_t H^\frac{1}{2}\cap \widetilde{L^4_t H^\frac{1}{2}_4})_T }&\les\|u_1-u_2\|_{L^{\infty}_TH^{1/2}\cap
\widetilde{L_T^4H_4^{1/2}}}\prod_{i=1}^2(1+\|u_i\|_{L^{\infty}_TH^{1/2}\cap
L_T^4H_4^{1/2}}^2).
\end{align}
\end{lemma}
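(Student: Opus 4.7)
The plan is to construct an explicit extension of $u$ to $\R\times\T$ and then to bound each of the three pieces of the target norm separately. Fix a cutoff $\eta\in C_c^\infty(\R)$ equal to $1$ on $[-1,1]$ and supported in $[-2,2]$, and set
\[
\tilde u(t) := \eta(t)\, u_*(t),
\]
where $u_*$ coincides with $u$ on $[-T,T]$ and with $W(t\mp T) u(\pm T)$ for $\pm t\in[T,2]$. Since $W(\cdot)$ is unitary on $H^s$ and the Strichartz-type estimate \eqref{str} applied to $D^s u_0$ controls $\|\eta W(\cdot)u_0\|_{\widetilde{L^4_t H^s_4}}$ by $\|u_0\|_{H^s}$, the contributions $\|\tilde u\|_{L^\infty_t H^s}$ and $\|\tilde u\|_{\widetilde{L^4_t H^s_4}}$ are immediately bounded by $\|u\|_{L^\infty_T H^s\cap\widetilde{L^4_T H^s_4}}$, which already fits the desired right-hand side.

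For the Bourgain-space piece I would use the equivalence
\[
\|\tilde u\|_{X^{s-1,1}}\simeq \|\tilde u\|_{L^2_t H^{s-1}} + \|(\partial_t+\Hl\partial_x^2)\tilde u\|_{L^2_t H^{s-1}},
\]
where the $L^2_t H^{s-1}$ part is harmless since $\tilde u$ is compactly supported in time and bounded in $L^\infty_t H^s$. A direct computation based on \eqref{eq:mBO2} yields
\[
(\partial_t+\Hl\partial_x^2)\tilde u=\eta'(t)\,u_*+2\eta(t)\,\mathbf{1}_{[-T,T]}(t)\,P_{\neq c}(u^2)u_x ,
\]
because $u_*$ solves \eqref{eq:mBO2} on $[-T,T]$ and the free equation outside. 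The boundary term $\eta'(t)u_*$ is immediately dominated by $\|\eta'\|_{L^2_t}\|u_*\|_{L^\infty_t H^s}\lesssim \|u\|_{L^\infty_T H^s}$, so everything reduces to bounding $\|P_{\neq c}(u^2)u_x\|_{L^2_T H^{s-1}}$.

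Using $P_{\neq c}(u^2)u_x = \tfrac{1}{3}\partial_x(u^3) - P_c(u^2)u_x$, the constant-Fourier-mode piece is handled trivially by $|P_c(u^2)|\lesssim \|u\|_{H^{1/2}}^2$, giving
\[
\|P_c(u^2)u_x\|_{L^2_T H^{s-1}}\lesssim T^{1/2}\|u\|_{L^\infty_T H^{1/2}}^2\|u\|_{L^\infty_T H^s}.
\]
For $\partial_x(u^3)$, I plan to establish the trilinear Sobolev bound
\[
\|u^3\|_{H^s}\lesssim \|u\|_{H^{1/2}}^2\,\|u\|_{H^s_4}\qquad (s\in[1/2,1])
\]
by iterating the fractional Leibniz rule of Lemma \ref{leibrule} with exponents $(p_1,p_2,p_3)=(4,8,8)$, combined with the Sobolev embedding $H^{1/2}(\T)\hookrightarrow L^8$. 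H\"older in time with $(\infty,\infty,4)$ then yields
\[
\|u^3\|_{L^2_T H^s}\lesssim T^{1/4}\|u\|_{L^\infty_T H^{1/2}}^2 \|u\|_{\widetilde{L^4_T H^s_4}},
\]
which, together with the bound on $P_c(u^2)u_x$ and the $\eta' u_*$ contribution, closes the $X^{s-1,1}$ estimate in the announced form.

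The difference estimate follows along the same lines after writing
\[
u_1^2u_{1,x}-u_2^2u_{2,x}=\tfrac{1}{3}\partial_x\bigl[(u_1-u_2)(u_1^2+u_1u_2+u_2^2)\bigr]
\]
and similarly splitting $P_c(u_1^2)u_{1,x}-P_c(u_2^2)u_{2,x}$; the analogous trilinear bound then assigns the $H^{1/2}_4$ role to $u_1-u_2$ and absorbs each of the remaining factors in $H^{1/2}$. The principal obstacle I foresee is verifying the trilinear Sobolev bound on $u^3$ cleanly: distributing $D^s$ across three factors via iterated fractional Leibniz produces several remainder terms in which the derivative is split, and each of them must be absorbed using either the embedding $H^{1/2}(\T)\hookrightarrow L^q$ for every $q<\infty$ or $H^s_4(\T)\hookrightarrow L^\infty$ for $s>1/4$. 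Once this combinatorial step is handled, the rest of the proof is a routine bookkeeping exercise with the cutoffs and Duhamel's formula.
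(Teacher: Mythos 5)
Your proposal is correct and follows essentially the same route as the paper: the paper builds the identical extension (phrased as $\tilde u=\eta(t)W(t)w(t)$ with $w=W(-t)u$ frozen outside $[-T,T]$), reduces the $X^{s-1,1}$ norm to $\|w\|_{L^2H^{s-1}}+\|\partial_t w\|_{L^2H^{s-1}}$, i.e.\ to bounding $P_{\neq c}(u^2)u_x$ in $L^2_TH^{s-1}$, and closes via the fractional Leibniz rule of Lemma \ref{leibrule} together with $L^\infty_tH^{1/2}_x\hookrightarrow L^8_{tx}$, exactly as you outline. The trilinear bound you flag as the remaining obstacle is handled in the paper in precisely the way you anticipate, so no genuinely different ingredient is involved.
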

\begin{proof}
We consider $w(t)=W(-t)u(t)$ on the time interval $[-T,T]$ and
extend $w$ on $(-2,2)$ by setting $\partial_t w=0$ on
$[-2,-2]\setminus[-T,T]$. Then it is clear that for any $\theta\in \R $,
\[\norm{\partial_t w}_{L^2((-2,2):H^{\theta})}=\norm{\partial_t w}_{L^2_T
H^{\theta}},\ \norm{w}_{L^2((-2,2):H^{\theta})}\les\norm{w}_{L^\infty_T
H^{\theta}}\] Now we define $\tilde u(t)=\eta(t)W(t)w(t)$.  $\tilde
u$ is clearly an extension of $u$ outside $(-T,T)$ and  it suffices to prove \eqref{tt} with the $ X^{s-1,1}$, $ L^\infty_t H^s$ and $  L^4_t H^s_4$-norms of $ \tilde{u} $ in the left-hand side. First, using that  $\partial_t w=2 W(-t)(P_{\neq c} (u^2) u_x)$, we get
\begin{align*}
\norm{\tilde u}_{X^{s-1,1}} \lesssim & \norm{w}_{L^2((-2,2):H^{s-1})}+\norm{\partial_t w}_{L^2((-2,2):H^{s-1})} \\
\lesssim &  \norm{ u}_{L^2((-2,2):H^{s-1})}+ \| D_x^{s} (u^3)\|_{L^2_{Tx}} + \|u\|_{L^\infty_T L^2_x}^2 \|D_x^s u \|_{L^2_{Tx}}\\
\lesssim &  \norm{u}_{L^2((-2,2):H^{s-1})}+\|D_x^{s} u
\|_{L^4_{Tx}\cap L^\infty_T L^2_x}
\|u\|_{L^{\infty}_TH^{\frac{1}{2}}_x}^2
\end{align*}
where in the last step we used  Lemma \ref{leibrule} together with
$ L^{\infty}_t H^{1/2}_x \hookrightarrow L^8_{tx} $. Second,
     $$
 \norm{\tilde{u}}_{L^\infty_t H^s}  \lesssim  \| \eta(t) W(t) w(t) \|_{L^\infty_t H^s}? \lesssim \|w\|_{L^\infty_T H^{s}} \lesssim \|u\|_{L^\infty_T H^{s}}\; .
$$
Third, we notice that
$$
\norm{\tilde{u}}_{\widetilde{L^4_t H^s_4}} \lesssim \norm{u}_{\widetilde{L^4(]-T,T[ ;H^s_4)}} +\norm{W(t)w(t) }_{\widetilde{L^4(]-2,2[/]-T,T[;H^s_4)}}
$$
with $ w(t)=w(T) $ for all $ t\in ]T,2[ $ and $ w(t) =W(-T) $ for all $t\in ]-2,-T[ $. Therefore, in view of \eqref{to3},
$$
\norm{W(t)w(t) }_{\widetilde{L^4(]T,2[;H^s_4)}} =\norm{W(t)w(T) }_{\widetilde{L^4(]T,2[;H^s_4)}}\lesssim \|w(T)\|_{H^s} =\|u(T)\|_{H^s} \lesssim
 \|u\|_{L^\infty_T H^s} \; .
$$
This completes the proof of \eqref{tt}.
Finally the estimates for the difference is similar and thus will be omitted.
\end{proof}

Next, we prove the properties of the factor $e^{ikF}$.

\begin{lemma}\label{lemma43}
Let $1/2\le  s\le 1$, $0<T\leq 1$ and  $u_1,u_2\in C([-T,T]:H^{s})\cap \widetilde{L_T^4H_4^{s}}$
be two solutions to  \eqref{eq:mBO2}. Then for $i=1,2$\begin{align}\label{eq:lgf1}
\|e^{-iF(u_i)}\|_{(L^{\infty}_t H^{s+1-}\cap
X^{-\frac{1}{2}-,1})_T} \les 1+  ( 1
+\|u_i\|^6_{L^{\infty}_TH^{\frac{1}{2}}\cap L^4_T H^{1\over 2}_4}) \|u_i\|_{L^\infty_T H^s\cap L^4_T H^s_4} \; .
\end{align}
Moreover,
\begin{align}
\|e^{-iF(u_1)}-e^{-iF(u_2)} & \|_{(L^{\infty}_t H^{\half3-}\cap
L^{4}_t H^{\frac{3}{2}}\cap X^{-\frac{1}{2}-,1})_T} \nonumber\\
\les& \norm{u_1-u_2}_{L^{\infty}_TH^{\frac{1}{2}}\cap
L^{4}_TH^{\frac{1}{2}}_{4}}\prod_{i=1}^2(1
+\|u\|^6_{L^{\infty}_TH^{\frac{1}{2}}\cap L^{4}_TH_4^{\frac{1}{2}}}) \; . \label{dif}
\end{align}
\end{lemma}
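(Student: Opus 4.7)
The $L^\infty_t H^{s+1-}$ piece of \eqref{eq:lgf1} is already contained in the earlier estimate \eqref{24b} (applied with $k=-1$), so the new content is the $X^{-1/2-,1}_T$ bound. For this I would mimic the extension construction used in the proof of Lemma \ref{lemma42}. Setting $z(t) := e^{-iF(u(t))}$ on $[-T,T]$ and $w(t) := W(-t)z(t)$, I would extend $w$ to $(-2,2)$ by requiring $\partial_t w \equiv 0$ outside $[-T,T]$ and take $\tilde z(t) := \eta(t)\, W(t) w(t)$. Since $|z|\equiv 1$ pointwise, the extension satisfies
\[
\|\tilde z\|_{X^{-1/2-,1}} \lesssim \|w\|_{L^2((-2,2);H^{-1/2-})} + \|\partial_t w\|_{L^2((-2,2);H^{-1/2-})} \lesssim 1 + \|\partial_t z\|_{L^2_T H^{-1/2-}}.
\]
So everything reduces to controlling $\|\partial_t z\|_{L^2_T H^{-1/2-}} = \|F_t\, e^{-iF}\|_{L^2_T H^{-1/2-}}$.

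Next I would substitute the explicit formula \eqref{F_t} for $F_t$, which decomposes it as the sum of $P_{\neq c}\bigl((P_{\neq c}u^2)^2\bigr)$, $-2u\Hl u_x$, $2P_c(u\Hl u_x)$, and $2B(u,u)$. The first three are pointwise-in-time products of $H^{1/2}$ functions, which Lemma \ref{lem:mul}(a) bounds in $H^{-1/2-}$ by $\|u\|_{L^\infty_T H^{1/2}}^k$, $k\in\{2,4\}$. For the fourth, Lemma \ref{lemma5} directly gives $\|B(u,u)\|_{L^2_TL^2_x}\lesssim \|u\|_{\widetilde{L^4_T H^{1/2}_4}}^2$, which embeds continuously into $L^2_T H^{-1/2-}$. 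Multiplying $F_t$ by $e^{-iF}\in L^\infty_T H^{s+1-}$ via Lemma \ref{lem:mul}(a) (with $s_1=-\tfrac{1}{2}{-}$, $s_2=s+1-$, so $s_1+s_2-\tfrac{1}{2}>-\tfrac{1}{2}{-}$) and plugging in \eqref{24b} yields \eqref{eq:lgf1}.

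For \eqref{dif} my plan would be to use the identity
\[
e^{-iF(u_1)} - e^{-iF(u_2)} = -i \bigl(F(u_1)-F(u_2)\bigr) \int_0^1 e^{-i(\theta F(u_1)+(1-\theta)F(u_2))}\, d\theta,
\]
where $F(u_1)-F(u_2) = \partial_x^{-1}P_{\neq c}((u_1+u_2)(u_1-u_2))$, and then to argue norm by norm. The bounds in $L^\infty_t H^{3/2-}$ and $L^4_t H^{3/2}$ follow by differentiating this product identity once or twice in $x$ and combining Lemma \ref{lem:mul}, the Leibniz rule (Lemma \ref{leibrule}), and the Lipschitz estimates \eqref{za1}-\eqref{za2}; the $\widetilde{L^4_T H^{1/2}_4}$ factor enters when the top derivative lands on a $u_i$ and one trades an $L^\infty_T H^{1/2}$ factor for an $L^4_T H^{1/2}_4$ one to close in $L^4$ in time. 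The Bourgain difference bound recycles the extension trick, now applied to the difference, whose time derivative splits via Leibniz as
\[
-iF_t(u_1)\bigl(e^{-iF(u_1)}-e^{-iF(u_2)}\bigr)-i\bigl(F_t(u_1)-F_t(u_2)\bigr)e^{-iF(u_2)}.
\]
Each $F_t(u_1)-F_t(u_2)$ is multilinear in $u_1,u_2$ by \eqref{F_t} and is treated by the same product estimates as above.

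The main obstacle I anticipate is simply bookkeeping: ensuring the polynomial weight in $\|u\|_{L^\infty_T H^{1/2}\cap \widetilde{L^4_T H^{1/2}_4}}$ never exceeds the power $6$ advertised in \eqref{eq:lgf1}-\eqref{dif}, and checking that all Sobolev product endpoints remain strictly admissible at the critical regularity $s=\tfrac12$. The fact that $B(u,u)$ is controllable only via $L^4_t H^{1/2}_4$ (not $L^\infty_t H^{1/2}$) is precisely what forces the $\widetilde{L^4_T H^{1/2}_4}$ factor to appear in the right-hand sides.
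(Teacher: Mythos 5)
Your approach is the same as the paper's: extend $e^{-iF}$ via the free evolution (set $w=W(-t)e^{-iF}$, freeze $\partial_t w$ outside $[-T,T]$, cut off in time), reduce the $X^{-1/2-,1}$ bound to an $L^2_TH^{-1/2-}$ bound on the time derivative of the profile, then feed in the formula \eqref{F_t} for $F_t$ together with Lemma \ref{lem:mul} for the pointwise-in-time products and Lemma \ref{lemma5} for the $B(u,u)$ term; the difference estimate is likewise handled in the paper only by reference to \eqref{za1}--\eqref{za2}. There is, however, one incorrect step in your reduction: since $w(t)=W(-t)z(t)$ with $z=e^{-iF}$, you have $\partial_t w=W(-t)\bigl[(\partial_t+\Hl\partial_x^2)z\bigr]$, not $W(-t)\partial_t z$, so the displayed inequality $\|\partial_t w\|_{L^2H^{-1/2-}}\lesssim 1+\|\partial_t z\|_{L^2_TH^{-1/2-}}$ omits the contribution of $\Hl\partial_x^2 e^{-iF}=-i\Hl\bigl(e^{-iF}(2uu_x-i(P_{\neq c}(u^2))^2)\bigr)$, which the paper estimates explicitly. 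The omission is harmless in substance — $2uu_x=\partial_x(u^2)$ and $(P_{\neq c}(u^2))^2$ are controlled in $H^{-1/2-}$ by exactly the product estimates you already invoke for $F_t$ — but as written your reduction to ``everything reduces to controlling $\|F_te^{-iF}\|_{L^2_TH^{-1/2-}}$'' is false and must be corrected to include this second term.
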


\begin{proof}
  We  set  $ z(t)=W(-t) e^{-iF} $ on $]-T,T[ $ and than extend $ z $ on $]-2,2[ $ by setting
 $ \partial_t z =0  $  on
$[-2,-2]\setminus[-T,T]$. Then $\tilde  w =\eta(t) W(t) z(t) $
 is an extension of $e^{-iF}$ outside $(-T,T)$. As in the previous lemma, for any $ \theta\in \R $, it holds
 $$\|\tilde  w\|_{L^\infty_t H^\theta} \lesssim \| e^{-iF} \|_{L^\infty_T H^\theta}?$$
 which together with \eqref{24}-\eqref{24b} gives the estimate for the first term on the left-hand side
of \eqref{eq:lgf1}. Moreover,
$$
\| \tilde{w} \|_{X^{-\frac{1}{2}-,1 }} \lesssim \norm{e^{-iF}}_{L^2_TH^{-\frac{1}{2}-}} +
\norm{(\partial_t+\Hl \partial_{x}^2)e^{-iF}}_{L^2_TH^{-\frac{1}{2}-}}
$$.

with
\begin{align*}
(\partial_t+\Hl \partial_{x}^2) e^{-i F} & = -ie^{-iF}F_t -i \Hl  \Bigl( e^{-iF} \Bigl(2u u_x -i (P_{\neq c} (u^2) )^2 \Bigr)\Bigr) \\
\end{align*}
 According to the expression \eqref{F_t} of $ F_t $,  Lemma \ref{lem:mul} and Lemma \ref{lemma5} , it holds
 $$
 \| F_t\|_{L^2_TH^{-\frac{1}{2}-}}+\Bigl\| 2u u_x +ik (P_{\neq c} (u^2) )^2\Bigr\|_{L^2_TH^{-\frac{1}{2}-}}\lesssim \|u\|_{L^\infty_T H^{1/2}}^4 + \| u\|_{L^4_T H^{1/2}_4}^2
 $$
which yields the desired result by using \eqref{24} and again   Lemma \ref{lem:mul}. \\
For the difference estimate \eqref{dif}, the proof is similar by using \eqref{za1}-\eqref{za2}.
 The details are omitted.
\end{proof}
\subsection{Multilinear estimates}
With Lemmas \ref{lemma42}-\ref{lemma43} in hand,
the following  proposition enables us to treat the worst term of
\eqref{est}, that is $ N^{\nu}(u,v)$ with $\nu\in \{0,1\} $.
\begin{proposition} Let $ 1/2\le s \le 1 $, $ w_1,w_4 \in X^{-1/2-,1}\cap L^\infty_t H^{s+1-}_x  $ ,
$u_2,u_3\in X^{s-1,1}\cap L^\infty_t H^{s}_x \cap \widetilde{L^4_t
H^{s}_4} $ and $ v_5\in X^{1/2,1/2} $  with compact support in time.
Then it holds
\begin{align}
\Bigl\|&  \partial_x^{\nu} P_{+} \Bigl( \partial_x^{-\nu}P_{+}
\Bigl(w_1 u_2 u_3 \Bigr)  \partial_xP_{-}( w_4 v_5) \Bigr)
\Bigr\|_{X^{s,-{1\over 2}+}}
\nonumber \\
 \lesssim &\,   \| w_1 \|_{L^\infty_t H^{s+1-}} \|w_4\|_{L^\infty_T H^{1/2}_x} \|v_5\|_{X^{1/2,1/2}} \prod_{i=2}^3  \|u_i\|_{L^\infty_t H^{1/2}_x}\nonumber \\
&  +\|v_5 \|_{X^{1/2,1/2}}    \prod_{i=1,4}  \Bigl( \|w_i\|_{X^{-1/2-,1}\cap L^\infty_t H^{3/2-}_x} \Bigr)\times\nonumber\\
& \sum_{ 2\le i\neq j \le 3}   \Bigl( \|u_i\|_{X^{-1/2,1}\cap
L^\infty_t H^{1/2}_x\cap \widetilde{L^4_t H^{1/2}_x}}\Bigr)
 \Bigl( \|u_j\|_{X^{s-1,1}\cap L^\infty_t H^{s}_x\cap \widetilde{L^4_t
 H^{s}_4}}\Bigr).
\end{align}
\end{proposition}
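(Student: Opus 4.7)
The strategy is a Fourier restriction analysis in Bourgain's spaces that exploits the fundamental cancellation furnished by the three projectors $P_+$, $P_-$, and the outer $P_+$ surrounding the expression. By duality, it suffices to pair the quantity against a test function $f \in X^{-s,1/2-}$ of unit norm, move $\partial_x^\nu P_+$ onto $f$ by Plancherel, and estimate the resulting sextilinear spacetime integral. The spatial frequencies $\xi_f, \xi_A, \xi_B$ of $P_+ f$, of $P_+(w_1 u_2 u_3)$, and of $P_-(w_4 v_5)$ satisfy $\xi_f = \xi_A + \xi_B$ with $\xi_A > 0 > \xi_B$, so $\xi_A = \xi_f + |\xi_B| \geq \max(\xi_f, |\xi_B|)$. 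This rules out the worst resonant interaction $(\text{low})\cdot \partial_x(\text{high})$: at least one of the three factors inside $P_+(w_1 u_2 u_3)$ must itself sit at frequency $\gtrsim |\xi_B|$ in order for the whole expression to be nontrivial.

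I would then Littlewood--Paley decompose each of the six factors, writing their dyadic frequencies as $N_f, N_1, N_2, N_3, N_4, N_5$, and split into cases according to which of $N_1, N_2, N_3$ dominates $N_5$. In the cases where $N_2$ or $N_3$ dominates --- i.e.\ where one of the $u$-factors absorbs the derivative from $\partial_x P_-(w_4 v_5)$ --- I would estimate the integral by H\"older in an $L^4\cdot L^4\cdot L^4\cdot L^4\cdot L^\infty\cdot L^\infty$ split, using Bourgain's embedding \eqref{str} on the four factors $f, v_5$ and the two $u$'s that carry $X^{s,b}$-regularity (gaining the $T^\nu$ by reducing the exponent $1/2-$ to $1/2-\varepsilon$), and Sobolev's embedding $H^{3/2-}\hookrightarrow L^\infty$ to place $w_1,w_4$ in $L^\infty_{t,x}$. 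This case produces the second term on the right-hand side, the two summands over $i\neq j$ corresponding to which $u$-factor carries the high frequency and hence the $H^s$-weight. In the complementary case where $N_1$ or $N_4$ dominates $N_5$, the large $w$-factor is the one forced to carry the $H^s$-weight, so it is estimated in $L^\infty_t H^{s+1-}$ via Sobolev, while the remaining factors are placed in $H^{1/2}$; this yields the first term of the bound.

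The analytic tool that makes these estimates clean is the Marcinkiewicz multiplier theorem, mentioned in the introduction as the new ingredient of the paper. Its role is to handle the multiplication by $w_1$ and $w_4$ against factors living in $X^{s,1/2}$: a naive $X^{s,b}$ product rule is too expensive because $w_i \in X^{-1/2-,1}$ has essentially no spatial regularity in Bourgain's space. The Marcinkiewicz theorem, combined with the extension result of Lemma~\ref{lemma43} which converts $L^\infty_t H^{s+1-}$-control into $X^{-1/2-,1}$-control and thereby provides abundant time-regularity, allows one to treat multiplication by $w_i$ essentially as a bounded operator on the dyadic pieces of $X^{s,b}$.

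The main obstacle is precisely this control of multiplication by the $w$-factors in Bourgain's space. Because $w_i$ is only an $H^{3/2-}$ extension of $e^{-iF(u)}$, one cannot hope for more than a nearly trivial spatial Sobolev regularity for it, so all the usable information is located in the high time-regularity encoded in $X^{-1/2-,1}$. The care needed to convert this time-regularity into effective estimates at the level of the sextilinear form --- while simultaneously keeping track of which factor carries the derivative coming from $\partial_x P_-(w_4 v_5)$, and extracting a positive power $T^\nu$ --- is the heart of the argument; all other pieces reduce, after the frequency case split, to standard H\"older plus Strichartz bookkeeping.
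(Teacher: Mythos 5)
Your general skeleton (dyadic decomposition, case split on which frequency dominates, H\"older with Bourgain's $L^4$ embedding for the easy interactions, Sobolev for the $w$-factors) matches the paper's, and your observation that the $P_+/P_-$ structure forces $N_{123}\gtrsim N\vee N_{45}$ is correct. But there is a genuine gap in the main case, and it is precisely the case the proposition exists to handle. When $N_1\vee N_2\vee N_4 \ll N\wedge N_5$ and $N_3\sim N_{123}\sim N_{\max}$ (so $N_5\lesssim N_3$ and $N\lesssim N_3$), the derivative count for a pure H\"older--Strichartz scheme is
$N^s\cdot N_5\cdot N_3^{-s}\cdot N_5^{-1/2} = (N/N_3)^s\, N_5^{1/2}$,
which diverges: the output weight $N^s$ and the derivative $\partial_x$ on $P_-(w_4v_5)$ cost $s+1$ derivatives, while $u_3$ and $v_5$ only supply $s+1/2$. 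The projector structure does not ``rule out the resonant interaction''; it only guarantees that some factor of the first group is at high frequency, which is not enough. The missing half derivative can only be recovered from the modulations, via the resonance relation $|\sigma_{\max}|\gtrsim|\xi\,\xi_5|\gtrsim (N\wedge N_5)N_3$. The paper isolates this case in a separate lemma, splits every factor into small- and large-modulation parts $\tilde z+\tilde{\tilde z}$ relative to the threshold $2^{-4}NN_5$, estimates the large-modulation part through the $X^{-1/2-,1}$ (resp.\ $X^{s-1,1}$) norms --- which is the only place those hypotheses are actually used --- and gains the factor $(NN_5)^{-1/2+}$ that closes the count. Your proposal never invokes the resonance identity, so the estimate cannot close as written.

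Relatedly, you assign the Marcinkiewicz multiplier theorem the wrong job. In the paper it is not used to make ``multiplication by $w_i$ a bounded operator on $X^{s,b}$''; it is used to show that the sharp modulation cutoff $\eta_L(\tau\mp\xi^2)$ composed with a spatial projection $\phi_N(\xi)$ is an $L^p_{tx}$ multiplier whenever $N^2\le L$. This is what legitimizes keeping $L^p$ bounds on the small-modulation truncations $\tilde z$ of the low-frequency factors after the splitting described above; without it the modulation decomposition would destroy the Lebesgue-space information you need for the remaining factors. Finally, two smaller points: the $T^\nu$ gain is not extracted inside this proposition (it comes from the Duhamel estimate via $L^2_TH^s\hookrightarrow X^{s,-1/2+}_T$), and in your first case the factor forced to carry the $H^{s+1-}$ weight is $w_1$ at frequency $N_1\gtrsim N_{123}$, not a consequence of ``$N_1$ or $N_4$ dominating $N_5$'' --- the case architecture has to track $N$ and $N_{123}$ as well, not only $N_5$.
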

\begin{proof}
 We want  to prove that
\begin{align*}
I:=&\Bigl\|  \partial_x^{\nu} P_{+}  \Bigl( \partial_x^{-\nu} P_{+} \Bigl(w_1 u_2 u_3\Bigr) \partial_xP_{-} (w_4 v_5) \Bigr)  \Bigr\|_{X^{s,-{1\over 2}+}}\\
= & \Bigl\| \sum_{N\ge 2, N_{123}\ge N , N_{45}\le N_{123}} \sum_{N_i, \; 1\le i\le 5} \\
& \quad \partial_x^{\nu} P_N P_{+}  \Bigl(
\partial_x^{-\nu}P_{N_{123}} \Bigl(P_{N_1}w_1 P_{N_2} u_2 P_{N_3}
u_3
   \Bigr) \partial_x P_{N_{45}}P_{-} (P_{N_4} w_4 P_{N_5} v_5) \Bigr)  \Bigr\|_{X^{s,-1/2+}} \; .
\end{align*}
     By the triangle inequality we can separate this sum in different sums on disjoint subset of $ (2^{\N})^8 $.
      By symmetry we can assume that $ N_2\le N_3 $.  \hspace*{2mm} \\
         {\bf 1.}    $N_4 \ge 2^{-8} N_5 $. Then $N_{45}\lesssim N_4 $ and we can write by almost orthogonality
         \arraycolsep1pt
         \begin{eqnarray*}
               I  & \lesssim  &\Bigl[ \sum_{N_{123}} \Bigl( \sum_{N_4}  \sum_{N_5\lesssim N_4}  \sum_{2\le N\le N_{123}} \sum_{ N_{45}\lesssim N_{4}}  \\
                & &\quad \quad  \Bigl\|\partial_x^{\nu}  P_N P_{+}
         \Bigl( P_{N_{123}}  \partial_x^{-\nu}\Bigl( w_1 u_2 u_3 \Bigr) \partial_x P_{N_{45}}P_{-}(P_{N_4} w_4 P_{N_5}v_5)\Bigr)
     \Bigr\|_{X^{s,-1/2+}}\Bigr)^2 \Bigr]^{1/2}  \\
  & \lesssim  &\Bigl[ \sum_{N_{123}} \Bigl( \sum_{N_4,N_5}   \sum_{2\le N\le N_{123}} \sum_{ N_{45}\lesssim N_{4}} N^s   \Bigl\|\partial_x^{\nu}  P_N P_{+}
         \Bigl( P_{N_{123}}  \partial_x^{-\nu}\Bigl( w_1 u_2 u_3 \Bigr) \\
         &&\quad\cdot\partial_x P_{N_{45}}P_{-}(P_{N_4} w_4 P_{N_5}v_5)\Bigr)
     \Bigr\|_{L^{4/3}_{tx}}\Bigr)^2 \Bigr]^{1/2}  \\
         & \lesssim  &\Bigl[ \sum_{N_{123}}\Bigl(N_{123}^{s}\|P_{N_{123}}(w_1 u_2 u_3) \|_{L^2_{tx}}\sum_{N\le N_{123}}(\frac{N}{N_{123}})^{s}
         \Bigr)^2 \Bigr]^{1/2} \\
         & & \sum_{N_4,N_5,N_{45}}N_{45}^{0-}N_4^{1+}N_5^{0-}\|P_{N_4}w_4 \|_{L^8_{tx}}\| P_{N_5}v_5\|_{L^8_{tx}} \\
         & \lesssim &  \|J^{s}_x(w_1 u_2 u_3) \|_{L^2_{tx}} \|w_4\|_{L^\infty_t H^{{3\over 2}-}_x}  \|v_5 \|_{L^8_{tx}}  \\
            & \lesssim &  (\|J_x^{s} w_1\|_{L^4_{tx}}+\|J_x^{s} u_2\|_{L^4_{tx}}+\|J_x^{s} u_3\|_{L^4_{tx}})
              (\|w_1\|_{L^8_{tx}}+\| u_2\|_{L^8_{tx}}+\| u_3\|_{L^8_{tx}})^2  \\
              &&  \|w_4\|_{L^\infty_t H^{3/2-}_x}  \|v_5 \|_{X^{1/2,1/2}}\\
              & \lesssim &  (\| w_1\|_{L^\infty_{t}H^{\frac{3}{2}-}}+\|u_2\|_{L^4_{t}H^s_4}+\|u_3\|_{L^4_{t}H^s_4})
              (\|w_1\|_{L^8_{tx}}+\| u_2\|_{L^8_{tx}}+\| u_3\|_{L^8_{tx}})^2  \\
              &&  \|w_4\|_{L^\infty_t H^{3/2-}_x}  \|v_5 \|_{X^{1/2,1/2}}
    \end{eqnarray*}
    where in the second  to the last step we used  Lemma \ref{leibrule1} . \\
         {\bf 2.}    $N_4 < 2^{-8} N_5 $.  Then $ N_{45} \sim N_5 $ so that we can drop the summation over $ N_{45}$ by replacing
          $P_{N_{45}} $ by $\tilde{P}_{N_5}$.  Note that in this region the frequency projections force $ N_5 \lesssim N_{123}$. \\
         {\bf 2.1.}    $N_4 \ge  2^{-8} N $.     By almost orthogonality it yields
             \begin{eqnarray*}
               I  & \lesssim  &\Bigl[ \sum_{N_{123}} \Bigl(\sum_{N_5\lesssim N_{123}}  \sum_{N_4 \lesssim N_5}   \sum_{2\le N\le N_{4}}
              N^s  \Bigl\|\partial_x^{\nu}  P_N P_{+}
         \Bigl( P_{N_{123}}  \partial_x^{-\nu}\Bigl( w_1 u_2 u_3 \Bigr) \\
         &&\quad\cdot\partial_x \tilde{P}_{N_{5}}P_{-}(P_{N_4} w_4 P_{N_5}v_5)\Bigr)
     \Bigr\|_{L^{4/3}_{tx}}\Bigr)^2 \Bigr]^{1/2}  \\
         & \lesssim  &\Bigl[ \sum_{N_{123}}\Bigl(N_{123}^{s}\|P_{N_{123}}(w_1 u_2 u_3) \|_{L^2_{tx}}\sum_{N_5\lesssim N_{123}}(\frac{N_5}{N_{123}})^{1/2}
        \|D_x^{1/2} P_{N_5}v_5\|_{L^4_{tx}} \Bigr)^2 \Bigr]^{1/2} \\
         & & \sum_{N_4,N}N^{0-}N_4^{\frac{1}{2}+}\|P_{N_4}w_4 \|_{L^\infty_{tx}} \\
         & \lesssim &  \|J^{s}_x(w_1 u_2 u_3) \|_{L^2_{tx}}  \|w_4\|_{L^\infty_t H^{3/2-}_x}  \|D_x^{1/2} v_5 \|_{L^4_{tx}}  \\
            & \lesssim &  (\| w_1\|_{L^\infty_{t}H^{\frac{3}{2}-}}+\|u_2\|_{L^4_{t}H^s_4}+\|u_3\|_{L^4_{t}H^s_4})
              (\|w_1\|_{L^8_{tx}}+\| u_2\|_{L^8_{tx}}+\| u_3\|_{L^8_{tx}})^2  \\
              &&  \|w_4\|_{L^\infty_t H^{3/2-}_x}  \|v_5 \|_{X^{1/2,1/2}}
    \end{eqnarray*}
      {\bf 2.2.}    $N_4 <  2^{-8} N $. \\
           {\bf 2.2.1.}
            $N_1\ge 2^{-8}  N_{123}$. Then we get
     \begin{eqnarray*}
       I & \lesssim &\sum_{N\ge 2} N^{s+\nu}\sum_{N_{123}\ge N} N_{123}^{-\nu} \sum_{N_1\gtrsim N_{123} }
       \sum_{N_2,N_3,N_4, N_5 \lesssim N_1}  \|P_{N_1}w_1 P_{N_2} u_2 P_{N_3} u_3\|_{L^2_{tx}} \\
       &&\quad\cdot N_5^{1/2} \|D_x^{1/2}v_5\|_{L^{4}_{tx}}  \|P_{N_4} w_4\|_{L^{\infty}_{tx}} \\
      & \lesssim &\sum_{N\ge 2} N^{0-}\sum_{N_{123}\ge N} N_{123}^{0-} \sum_{N_1}
      N_1^{s+\frac{1}{2}+} \|P_{N_1}w_1\|_{L^4_{tx} } ( \sum_{N_4}  N_4^{0-}\|w_4\|_{L^\infty_{tx}}) \\
       & & (\sum_{N_5 } N_5^{0-} \|D_x^{1/2} v_5\|_{L^{4}_{tx}}) \Bigr)  \prod_{i=2}^3
      \Bigl(  \sum_{N_i} N_i^{0-} \|P_{N_i} u_i\|_{L^{8}_{tx}}\Bigr) \\
       & \lesssim & \| w_1 \|_{L^\infty_t H^{s+1-}} \|w_4\|_{L^\infty_T H^{1/2}_x} \|v_5\|_{X^{1/2,1/2}} \prod_{i=2}^3  \|u\|_{L^\infty_t H^{1/2}_x}
      \end{eqnarray*}
     {\bf 2.2.2 } $N_1< 2^{-8}  N_{123}$. Then we have $ N_3 \sim N_{123}\sim N_{max} $. Since in this case it always hods $ 2^{-3} \le N_3/N_{123}\le 2^{3}$,
      by a slight abuse of notation we can drop the summation over $ N_{123} $ by replacing $ P_{N_{123}}$ by $ \tilde{P}_{N_3}$. \\
      {\bf 2.2.2.1} $N_1\ge 2^{-5} N_5 $.  Then  by almost orthogonality we get
     \begin{eqnarray*}
            I & \lesssim &\Bigl[ \sum_{N_3\ge 2}\Bigl( \sum_{2\le N\lesssim N_3 } N^{s+\nu} N_{3}^{-\nu}\sum_{N_1,N_2} \|P_{N_1}w_1 P_{N_2} u_2 P_{N_3} u_3\|_{L^\frac{8}{3}_{tx}}\\
            &&\quad\cdot \sum_{N_5\lesssim N_{1}} N_5 \|v_5\|_{L^{8}_{tx}}\sum_{N_4}\|P_{N_4} w_4 \|_{L^\infty_{tx}}
            \Bigr)^2 \Bigr]^{1/2}\\
     &  \lesssim &\Bigl[ \sum_{N_3\ge 2 }\Bigl( N_3^{s}\|P_{N_3}u_3\|_{L^4_{tx}}\sum_{N\lesssim N_3}(\frac{N}{N_3})^{s+\nu}\Bigr)^2\Bigr]^{1/2}
      \sum_{N_1}N_1^{1+}\|P_{N_1} w_1\|_{L^4_{tx}} \\
      & &  \sum_{N_3}
      N_2^{0-} \|D_x^{0+}P_{N_2} u_2\|_{L^{8}_{tx}} \sum_{N_4}
      N_4^{0-} \| D_x^{0+}P_{N_4} w_4\|_{L^{\infty}_{tx}} \sum_{N_5}
      N_5^{0-} \| D_x^{0+}P_{N_5} v_5\|_{L^{8}_{tx}} \\
       & \lesssim & \|w_1\|_{L^\infty_t H^{3/2-}}\| u_3\|_{\widetilde{L^4_{t}H^s_4}}  \| u_2 \|_{L^\infty_t H^{1/2}}
        \| w_4\|_{L^\infty_t H^{1/2}} \| v_5\|_{X^{1/2,1/2}}
      \end{eqnarray*}
             {\bf 2.2.2.2.}  $ N_1 < 2^{-5} N_5 $ and $ N_1 \ge 2^{-5} N $.  Then it holds
         \begin{eqnarray*}
     I &  \lesssim &  \sum_{N_2} \|P_{N_2}u_2\|_{L^4_{tx}}
      \sum_{N_1}N_1^{{1\over 2}+}\|P_{N_1} w_1\|_{L^\infty_{tx}} \sum_{N_3}
      N_3^{s-} \|P_{N_3} u_3\|_{L^{4}_{tx}}  \\
       & &  \sum_{N_4}   \|  P_{N_4} w_4\|_{L^{\infty}_{tx}}\sum_{N_5}
      N_5^{1/2-} \| P_{N_5} v_5\|_{L^{4}_{tx}} \\
       & \lesssim & \|w_1\|_{L^\infty_t H^{3/2-}_x}\| u_2\|_{L^\infty_{t}H^{1/2}}  \| u_3\|_{\widetilde{L^4_{t}H^s_4}} \|v_5\|_{X^{1/2,1/2}}
       \| w_4 \|_{L^\infty_t H^{3/2-}_x}\; .
      \end{eqnarray*}
        {\bf 2..2.2.3}  $ N_1 < 2^{-5} (N_5\wedge N)  $ and $ N_2 \ge 2^{-5} (N_5\wedge N) $.\\
   {\bf 2.2.2.3.1}   $ N_2 \ge 2^{-7} N $. Then either $ N_3\sim N $ and then $ N_3 \sim N\sim  N_2 $ which leads to
   \arraycolsep1pt
\begin{align*}
I  \lesssim  & \Bigl[\sum_{N_3\ge 2}\Bigl( \sum_{N_2\sim N_3} \sum_{
N_i\lesssim N_2 \atop i\in\{1,4,5\}}  \Bigl\|  \partial_x^{\nu}
{\tilde P}_{N_3} P_{+} \Bigl(\tilde{P}_{N_3}
\partial_x^{-\nu}\Bigl( P_{N_1}w_1 P_{N_2} u_2 P_{N_3} u_3\Bigr)\\
&\quad \cdot\partial_x \tilde{P}_{N_5}P_{-} (P_{N_4} w_4 P_{N_5}
v_5)\Bigr)
     \Bigr\|_{X^{s,-1/2}}\Bigr)^2\Bigr]^{1/2} \\
     \lesssim  & \Bigl[\sum_{N_3}  \Bigl( \sum_{N_2\sim N_3} \sum_{ N_i\lesssim N_2 \atop i\in\{1,4,5\}}  N_3^{s}
       \Bigl\|P_{N_1}w_1 P_{N_2} u_2 P_{N_3} u_3\Bigr\|_{L^2_{tx}} N_5  \|P_{N_4} w_4 P_{N_5} v_5\|_{L^4_{tx}}\Bigr)^2\Bigr]^{1/2} \\
                \lesssim &
       \Bigl[ \sum_{N_3}N_3^{2s} \|  P_{N_3}u_3 \|_{L^4_{tx}}^2 \Bigl( \sum_{N_1,N_4}\|P_{N_1}  w_1\|_{L^\infty_{tx}}
       \|P_{N_4}  w_4\|_{L^\infty_{tx}} \sum_{N_2\sim N_3}  \sum_{N_5\lesssim  N_3 } \Bigl( \frac{N_3}{N_2}\Bigr)^{1/2} \Bigl( \frac{N_5}{N_3}\Bigr)^{1/2} \\
       &  \quad   \|D_x^{1/2} P_{N_2}u_2\|_{L^4_{tx}}
      \|P_{N_5} D_x^{1/2} v_5 \|_{ L^4} \Bigr)^2 \Bigr]^{1/2} \\
      \lesssim & \|u_3 \|_{\widetilde{L^4_{t}H^s_4}} \| D_x^{1/2} u_2\|_{L^4_{tx}}  \| D_x^{1/2} v_5 \|_{L^4_{tx}} \|w_1 \|_{L^\infty_t H^{3/2-}}\|w_4 \|_{L^\infty_t H^{3/2-}}
\end{align*}
                   or $ N_3 \sim N_5 $ and then  we get
                       \begin{eqnarray*}
   I  & \lesssim  & \Bigl[\sum_{N}\Bigl( \sum_{N_2\gtrsim N}  \sum_{N_3\gtrsim N}  \sum_{ N_i,  i\in\{1,4\}}  N^{s+\nu} N_3^{1/2-\nu}    \| P_{N_1}w_1 \|_{L^\infty_{tx}}
   \|P_{N_2} u_2 \|_{L^4_{tx}} \\
    && \quad \|P_{N_3} u_3\|_{L^4_{tx}} \|P_{N_4} w_4\|_{L^\infty_{tx}} \| P_{N_3} D_x^{1/2} v_5\|_{L^4_{tx}}\Bigr)^2 \Bigr]^{1/2}\\
           & \lesssim & \|w_1\|_{L^\infty_t H^{1/2}_x}\|w_4\|_{L^\infty_{t}H^{1/2}_x}\sum_{N_3}N_3^{s} \|  P_{N_3}u_3 \|_{L^4_{tx}}
                    \|  P_{N_3}D_x^{1/2} v_5 \|_{L^4_{tx}} \\
& & \quad  \Bigl[\sum_{N}\Bigl( \sum_{N_2\gtrsim N}\Bigl( \frac{N}{N_2}\Bigr)^{1/2} N_2^{1/2}\|P_{N_2} u_2\|_{L^4_{tx}}\Bigr)^2 \Bigr]^{1/2}\\
                & \lesssim & \| u_3 \|_{\widetilde{L^4_{t}H^s_4}} \|u_2 \|_{\widetilde{L^4_{t}H^{1/2}_4}} \| D_x^{1/2} v_5 \|_{\tilde{L}^4_{tx}}
                \|w_1 \|_{L^\infty_t H^{3/2-}}\|w_4 \|_{L^\infty_t H^{3/2-}}
\end{eqnarray*}
where, in the last step, we used Cauchy-Schwarz in $ N_3 $ and that by discrete Young inequality
  $$
     \Bigl\|   \sum_{ k\in \Z}   (2^{k-k_2} )^{1/2} \chi_{\{k-k_2\le 5\}}   \|J_x^{1/2}P_{2^{k_2}}u_2 \|_{L^4} \Bigr\|_{l^2(\N)} \lesssim \| J_x^{1/2} u_2 \|_{ \tilde{L}^4_{tx}}\; .
     $$
      {\bf 2.2.2.3.2.}  $ N_2 < 2^{-7}N $ . Then $ N_2\ge 2^{-5} N_5 $
          since we must have   $ N_5\le 2^{-3}N_3$. This forces  $ N_3\sim N $ so that we get
           \begin{eqnarray*}
   I & \lesssim  & \Bigl[\sum_{N_3}  \Bigl( \sum_{N_2} \sum_{N_5\lesssim N_2} \sum_{ N_1,N_4}  N_3^{s}
       \Bigl\|P_{N_1}w_1 P_{N_2} u_2 P_{N_3} u_3\Bigr\|_{L^2_{tx}} N_5  \|P_{N_4} w_4 P_{N_5} v_5\|_{L^4_{tx}}\Bigr)^2\Bigr]^{1/2} \\
                & \lesssim & \|w_1\|_{L^\infty_t H^{1/2}_x}\|w_4\|_{L^\infty_{t}H^{1/2}_x}
       \Bigl[ \sum_{N_3}N_3^{2s} \|  P_{N_3}u_3 \|_{L^4_{tx}}^2\Bigr]^{1/2}   \\
&  &       \sum_{N_2}  \sum_{N_5\lesssim  N_2 } \Bigl( \frac{N_5}{N_2}\Bigr)^{1/2}
      \|J_x^{1/2} P_{N_2}u_2\|_{L^4_{tx}}
      \|P_{N_5} D_x^{1/2} v_5 \|_{ L^4}  \\
      & \lesssim & \|u_3 \|_{\widetilde{L^4_{t}H^s_4}}\| u_2 \|_{\widetilde{L^4_{t}H^{1/2}_4}}  \| D_x^{1/2} v_5 \|_{\tilde{L}^4_{tx}}
       \|w_1 \|_{L^\infty_t H^{3/2-}}\|w_4 \|_{L^\infty_t H^{3/2-}}
\end{eqnarray*}
where in the last step we used the discrete Young inequality.\\
          {\bf 2.2.2.4.}   $ (N_1\vee N_2) <  2^{-5} (N\wedge N_5)  $.
    Here it is worth noticing that we can assume that $(N \wedge N_5) \ge 2^4 $ and  the result follows  directly
    from the lemma  below and the proof of the proposition is completed.
    \end{proof}
    \begin{lemma} Under the same hypotheses on $ u_i $ as in Proposition, it holds
\begin{align*}
J:=\Bigr[& \sum_{N\ge 2^4} \Bigr( \sum_{(N_i)_{1\le i\le 5}\in
\Lambda_N} \Bigl\| \partial_x^{\nu}P_N
P_{+}\Bigl(\partial_x^{-\nu}\tilde{P}_{N_3} \Bigl( P_{N_1} w_1
P_{N_2}u_2 P_{N_3} u_3\Bigr)\\
&\cdot\partial_x \tilde{P}_{N_5}P_{-} (P_{N_4} w_4 P_{N_5}
v_5)\Bigr) \Bigr\|_{X^{s,-1/2+}} \Bigl)^2 \Bigr]^{1/2} \lesssim
\prod_{i=1}^3 \| u_i\|_{Z}
\end{align*}
where
\begin{align*}
\Lambda_N:=\Bigl\{&(N_1,N_2,N_3,N_4,N_5)\in (2^{\N} \cup \{0\})^5,
\, N_3\ge  2^{-3} N, \\ &2^4<N_5\le 4  N_3, \, (N_1\vee N_2 \vee
N_4)<2^{-5} (N \wedge N_5)\Bigl\}.
\end{align*}
    \end{lemma}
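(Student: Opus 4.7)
The proof exploits the strong resonance inherent in this regime. Denoting by $\xi_A$ and $\xi_B$ the frequencies of the two bilinear building blocks, the outer $P_+$ forces $\xi>0$ and the inner $P_-$ forces $\xi_B<0$, hence $\xi_A=\xi-\xi_B>0$. A direct computation using $\xi_A+\xi_B=\xi$ yields the resonance identity
\begin{equation*}
H:=|\xi|\xi-|\xi_A|\xi_A-|\xi_B|\xi_B=\xi^2-\xi_A^2+\xi_B^2=\xi_B(\xi+\xi_A+\xi_B)=2\xi\xi_B.
\end{equation*}
In $\Lambda_N$ the low frequencies satisfy $N_1\vee N_2\vee N_4<2^{-5}(N\wedge N_5)$, so $|\xi|\sim N$, $|\xi_A|\sim N_3$, $|\xi_B|\sim N_5$, and therefore $|H|\sim NN_5$.

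I would next decompose each of the two bilinear building blocks and the output according to their modulations $L_A$, $L_B$ and $L_{\rm out}$ (with respect to $\langle\tau+|\xi|\xi\rangle$). Since $\tau=\tau_A+\tau_B$, the resonance identity forces
\begin{equation*}
\max(L_{\rm out},L_A,L_B)\gtrsim NN_5,
\end{equation*}
and the analysis then proceeds by case analysis on which modulation dominates.

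In Case A ($L_{\rm out}\gtrsim NN_5$) the $X^{s,-1/2+}$-norm supplies a gain $L_{\rm out}^{-1/2+}\lesssim(NN_5)^{-1/2+}$ which balances the derivative cost $N^{s+\nu}N_3^{-\nu}N_5$; the remaining pairing is closed by placing the high-frequency pieces $P_{N_3}u_3$ and $P_{N_5}v_5$ in $L^4_{tx}$ via the Bourgain Strichartz embedding \eqref{str}, and the low-frequency pieces $P_{N_1}w_1$, $P_{N_2}u_2$, $P_{N_4}w_4$ in $L^\infty_{tx}$ or $L^8_{tx}$ via Sobolev embedding. In Cases B and C ($L_A$ resp.\ $L_B$ dominant), Lemmas \ref{lemma42} and \ref{lemma43} supply $X^{s-1,1}$-control on (the extension of) $u_i$ and $X^{-1/2-,1}$-control on $e^{-iF(u_i)}$, so the large modulation is absorbed by the corresponding factor, giving a gain $L_{\max}^{-1}\lesssim(NN_5)^{-1}$; the other four factors are then estimated exactly as in Case A.

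The inner sums over $N_1,N_2,N_4$ converge absolutely because the corresponding low-frequency Sobolev embeddings leave an $N_i^{0-}$ margin, and the outer $\ell^2_N$-summation is captured by the square-function definition of $\widetilde L^4_tH^s_4$ applied to $P_{N_3}u_3$. I expect the main obstacle to be Case A: the modulation gain $(NN_5)^{-1/2+}$ only barely beats the derivative loss, so the Strichartz $L^4_{tx}$-norm must be distributed carefully between $u_3$ and $v_5$, and the $\widetilde L^4$-square-function on $u_3$ is used to secure the narrow $N^{0+}$ margin needed to close the outer dyadic sum in $N$ without logarithmic losses.
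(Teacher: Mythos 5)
Your skeleton matches the paper's: you identify the resonance $|H|\sim NN_5$ correctly (the low-frequency constraint in $\Lambda_N$ guarantees the $O(N_{low}^2)$ corrections are negligible), you dispose of the large-output-modulation case by the $(NN_5)^{-1/2+}$ gain from $X^{s,-1/2+}$, and you propose to push the remaining large modulation onto one of the five input factors, absorbing it via the $X^{s-1,1}$ bound on $u_3$ and the $X^{-1/2-,1}$ bounds on $w_1,u_2,w_4,$ together with $X^{1/2,1/2}$ on $v_5$. This is exactly the paper's plan.

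However, there is a genuine gap in your Cases B and C. To locate the dominant modulation you must split each factor as $z=\tilde z+\tilde{\tilde z}$ at the threshold $2^{-4}NN_5$; the term where the large modulation sits on one factor then contains the \emph{small-modulation truncations} $\tilde z$ of the remaining factors, and your claim that ``the other four factors are then estimated exactly as in Case A'' fails at this point: the sharp modulation cutoff $\mathcal F^{-1}\eta_L(\tau\mp\xi^2)\mathcal F$ is not bounded on $L^p_{tx}$ for $p\neq 2$ in general, so $\|P_{N_i}\tilde z\|_{L^p}$ cannot be replaced by $\|P_{N_i}z\|_{L^p}$ for free. The paper closes this with its Marcinkiewicz multiplier lemma (estimate \eqref{mikhlin}), which gives $\|P_{N_i}\tilde z\|_{L^p}\lesssim\|P_{N_i}z\|_{L^p}$ precisely because on $\Lambda_N$ the truncated factors are the low-frequency ones, $N_i^2\le 2^{-2}NN_5$ for $i\in\{1,2,4\}$ — this is the ``main new ingredient'' announced in the introduction, and it is the reason the argument only works for the low-frequency factors (for the high-frequency ones, $v_5$ and $u_3$, the paper instead orders the decomposition so that they are either left untruncated or carry the large modulation themselves, the last case $\tilde{\tilde u}_3$ being forced once all others have small modulation). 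Relatedly, when the large modulation falls on $w_1$, $u_2$ or $w_4$, the available norm $X^{-1/2-,1}$ yields a gain $L_{\max}^{-1}N_i^{1/2+}$ rather than $L_{\max}^{-1}$; this is harmless only because $N_i\lesssim(NN_5)^{1/2}$, a point your sketch should make explicit since it is what ties the modulation analysis to the frequency constraints defining $\Lambda_N$.
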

    \begin{proof}
   It is worth noticing that, thanks to  the frequency projections, $ N_3 \sim N_{max} $ and
      the resonance relation yields
      \begin{equation}
       |\sigma_{max}|\gtrsim |\xi \xi_5 | \ge 2^{-2} N N_5 \gtrsim (N\wedge N_5) N_3
       \end{equation}
        for all the contributions in $ J $.  First we can easily treat the contribution of the region $ \{(\tau,\xi),\, \langle \tau- \xi |\xi| \rangle \ge 2^{-2} N N_5 \} $.  Indeed, we then get
      \begin{eqnarray*}
   J   & \lesssim  &\sum_{N\ge 2^4 } \sum_{(N_1,N_2,N_3,N_4,N_5)\in \Lambda_N} ( N  N_5)^{-{1\over 2}+}  N_5^{1/2}  N^{s+\nu} N_3^{-s-\nu}  \\
    & &
   \|P_{N_3} D_x^{s} u_3 \|_{L^{4}_{tx}}  \| D_x^{1/2}P_{N_5} v_5 \|_{L^{4}_{tx}}
   \|P_{N_2}u_2 \|_{L^{\infty}_{tx}}    \prod_{i=1,4}  \|P_{N_i} w_i \|_{L^{\infty}_{tx}}
   \\
      & \lesssim & \sum_{N\ge 2^4}   N^{-\frac{1}{2}+}    \| u_3 \|_{L^4_{t}H^s_4}  \| D_x^{1/2} v_5 \|_{L^4_{tx}}
      \|u_2 \|_{L^\infty_t H^{1/2}_x}    \|w_1 \|_{L^\infty_t H^{1/2}_x}\|w_4 \|_{L^\infty_t H^{1/2}_x}
                \end{eqnarray*}
which is acceptable. Therefore in the sequel we can assume that $  \langle \tau- \xi |\xi| \rangle<
 2^{-2} N N_5 \} $.  Now, for any fixed couple $ (N,N_5) \in  (2^{\N})^2 $, we split any function $ z\in L^2_{tx}$ into two parts related to the value of $ \sigma $ by setting
   $$
  z= {\mathcal F}^{-1} \Bigl( \eta_{ 2^{-4}N N_5}(\tau-\xi |\xi|)  \widehat{z}\Bigr)
   +{\mathcal F}^{-1} \Bigl((1-\eta_{2^{-4}N N_5}(\tau-\xi|\xi|) ) \widehat{z}\Bigr) :=
   \tilde{z}+\tilde{\tilde{z}}\; .
   $$
  { \bf 1.} Contribution of  $ \tilde{\tilde{v}}_5 $.  We now control  the contribution of $ \tilde{\tilde{v}}_5 $ to $ J $  in the following way : either $ N\sim N_3\sim N_{max} $
  and we write
\begin{align*}
   J \lesssim  &\sum_{N\ge 2^4}  \sum_{N_3\sim N} \sum_{N_i \le 2^{5} N_3 \atop  i=1,3,4,5}
   N^{s}   ( N  N_5)^{-1/2}  N_5^{1/2}  N_3^{-s} \\
    & \| P_{N_5}  \tilde{\tilde{v}}_5  \|_{X^{1/2,1/2}}
   \|P_{N_3} D_x^{s} u_3 \|_{L^{4}_{tx}} \|P_{N_2} u_2 \|_{L^{\infty}_{tx}}
      \prod_{i=1,4} \|P_{N_i} w_i \|_{L^{\infty}_{tx}}
   \\
      \lesssim & (\sum_{N} N^{-\frac{1}{2}+} ) \| \tilde{\tilde{v}}_5 \|_{X^{1/2,1/2}}  \|   u_3 \|_{L^4_{t}H^s_4}
      \|u_2 \|_{L^{\infty}_{t}H^{1/2}_x }    \prod_{i=1,4} \|w_i \|_{L^{\infty}_{t}H^{1/2}_x }
\end{align*}
       or $ N_5\sim N_3\sim N_{max} $ and we write
        \begin{align*}
   J \lesssim  &\sum_{N\ge 2^4} \sum_{N_3} \sum_{N_5\sim N_3}  \sum_{N_i \le 2^{-5} N \atop  i=1,2,4}
   N^{s}   ( N  N_5)^{-1/2}  N_5^{1/2}  N_3^{-s} \\
    & \| P_{N_5}  \tilde{\tilde{v}}_5  \|_{X^{1/2,1/2}}
   \|P_{N_3} D_x^{s} u_3 \|_{L^{4}_{tx}}
     \|P_{N_2} u_2 \|_{L^{\infty}_{tx}} \prod_{i=1,4} \|P_{N_i} w_i \|_{L^{\infty}_{tx}}
   \\
      \lesssim & (\sum_{N} N^{-\frac{1}{2}+} ) \| v_5 \|_{X^{1/2,1/2}}  \|  u_3 \|_{\widetilde{L^4_{t}H^s_4}}
      \| u_2 \|_{L^{\infty}_{t}H^{1/2}_x}
        \prod_{i=1,4} \|w_i \|_{L^{\infty}_{t}H^{1/2}_x }
          \end{align*}
where we apply Cauchy-Schwarz in $ N_3\sim N_5 $ in the last step.\\
  { \bf 2.} Contribution of  $ \tilde{v}_5 $. \\
   { \bf 2.1} Contribution of $ \tilde{\tilde{w}}_1 $.
    We easily get
 \begin{align*}
   J  \lesssim  &
   \sum_{N}  \sum_{(N_1,N_2,N_3,N_4,N_5)\in \Lambda_N} ( (N \wedge N_5)N_3)^{-1} N_1^{\frac{1}{2}+} N_5^{1-s}  N^{s} \\
    & \| P_{N_1} \tilde{\tilde{w}}_1 \|_{X^{-1/2-,1}}    \|P_{N_3} u_3 \|_{L^{\infty}_{tx}}
   \|P_{N_5} D_x^{s} \tilde{v}_5 \|_{L^{4}_{tx}}
  \|P_{N_2} u_2 \|_{L^{\infty}_{tx}}  \|P_{N_4} w_4 \|_{L^{\infty}_{tx}}
   \\
      \lesssim &    \sum_{N} N^{-\frac{1}{2}+} \|w_1\|_{X^{-1/2-,1}}
        \|   u_3\|_{L^\infty_{t}H^{1\over 2}_x}\|   v_5 \|_{X^{s,1/2}}
        \|u_2 \|_{L^{\infty}_{t}H^{1/2}_x }    \|w_4 \|_{L^{\infty}_{t}H^{1/2}_x }
          \end{align*}
which is acceptable.\\
{ \bf 2.2} Contribution of $\tilde{w}_1 $.
   To treat  this contribution  we will extensively use the following lemma which is a direct application of the Marcinkiewicz multiplier theorem.
      \begin{lemma} For any $ p\in ]1,+\infty[ $
   there exists $ C_p> 0$ such that for all  $ N \ge 1 $ and  all $ L\ge N^2  $,
   \begin{equation}\label{mikhlin}
  \Bigl\| {\mathcal F}^{-1}_{tx}\Bigl( \phi_{N} (\xi)  \eta_{ L}(\tau\mp\xi^2) f(\tau,\xi)
  \Bigr) \Bigr\|_{L^p_{tx}} \le C_p \, \| f\|_{L^p_{tx}} \, , \quad \forall f\in L^p(\R^2) \; .
  \end{equation}
   \end{lemma}
   \begin{proof} By  Marcinkiewicz  multiplier theorem (see for instance (\cite{G},Corollary 5.2.5 page 361)), it suffices to check that
   $$
 \Bigl|  \partial^{(\alpha_1,\alpha_2)}_{\tau,\xi} \Bigl(  \phi_{N} (\xi)  \eta_{ L}(\tau\mp \xi^2) \Bigr)\Bigr| \lesssim |\xi|^{\alpha_1}  |\tau|^{\alpha_2}
   \mbox{ for } |\alpha|\le  2.
    $$
    But this follows directly from the fact that for $ N^2\le L $ ,
    $$
    \frac{d}{d\xi} \Bigl(  \phi_{N} (\xi)  \eta_{ L}(\tau\mp \xi^2)\Bigr)=O(N^{-1}) \mbox{ and }\frac{d}{d\tau}
     \Bigl(  \phi_{N} (\xi)  \eta_{ L}(\tau\mp \xi^2)\Bigr) =O(L^{-1})\; .
    $$
   \end{proof}
It is worth noticing that on $ \Lambda_N $, with $ N\ge 2^4 $,  it holds $ N_i^2 \le 2^{-2} N N_5 $ for $i\in\{1,2,4\}$.
 Hence, in view of  \eqref{mikhlin}, for any $ 1<p<\infty $, setting   $(z_1,z_2,z_4)= (w_1,u_2,w_4) $ it holds
 $$
  \| P_{N_i} P_{\mp} \tilde{z}  \|_{L^p_{tx}} \le\, C_p \,  \| P_{N_i}  P_{\mp} z\|_{L^p_{tx}}\; .
 $$
 and thus by the continuity of the Hilbert transform in $ L^p $, $1<p<\infty $,
 \begin{equation} \label{tata}
  \| P_{N_i}  \tilde{z}  \|_{L^p_{tx}} \le\, \tilde{C}_p \,  \|    P_{N_i} z\|_{L^p_{tx}}\; .
  \end{equation}
   We separate the contribution of
 $ \tilde{w}_1 $ in different sub-contributions. \\
   {\bf 2.2.1}  Contribution of $ \tilde{\tilde{u}}_2 $. Then we write
   \begin{align*}
   J  \lesssim  &
   \sum_{N}  \sum_{(N_1,N_2,N_3,N_4,N_5)\in \Lambda_N} ( (N\wedge   N_5)N_3)^{-1} N_2^{\frac{1}{2}+} N_5  N^{s} N_3^{-s+1/6}\\
    & \| P_{N_2} \tilde{\tilde{u}}_2 \|_{X^{-1/2-,1}}    \|P_{N_3} D_x^{s-1/6}u_3 \|_{L^{6}_{tx}}
   \|P_{N_5} \tilde{v}_5 \|_{L^{24}_{tx}}   \|P_{N_1} \tilde{w_1} \|_{L^{24}_{tx}}
  \|P_{N_2} u_2 \|_{L^{\infty}_{tx}}  \|P_{N_4} w_4 \|_{L^{\infty}_{tx}}
   \\
      \lesssim &    \sum_{N} N^{-\frac{1}{6}} \|w_1\|_{L^{24}_{tx}}
        \|  D_x^{s} u_3\|_{L^6_t L^3_x}\|   v_5 \|_{X^{1/2,1/2}}
        \|u_2 \|_{X^{-1/2-,1}}    \|w_4 \|_{L^{\infty}_{t}H^{1/2}_x } \\
           \lesssim &    \|w_1\|_{L^{\infty}_t H^{1/2}_x}
        \|   u_3\|_{L^\infty_t H^{s}\cap L^4_t H^{s}_4}\|   v_5 \|_{X^{1/2,1/2}}
        \|u_2 \|_{X^{-1/2-,1}}     \|w_4 \|_{L^{\infty}_{t}H^{1/2}_x }
          \end{align*}
where we used Sobolev inequalities and \eqref{tata} in the last to the last step.\\
{\bf 2.2.2} Contribution of $ \tilde{u}_2 $.\\
{\bf 2.2.2.1} Contribution of $ \tilde{\tilde{w}}_4 $. This subcontribution can be estimated in the same way by
 \begin{align*}
   J  \lesssim  &
   \sum_{N}  \sum_{(N_1,N_2,N_3,N_4,N_5)\in \Lambda_N} (  (N\wedge   N_5)  N_3)^{-1} N_4^{\frac{1}{2}+} N_5   N^{s} N_3^{-s+1/6}\\
    & \| P_{N_2} \tilde{\tilde{w}}_4 \|_{X^{-1/2-,1}}    \|P_{N_3} D_x^{s-1/6}u_3 \|_{L^{6}_{tx}}
   \|P_{N_5}  \tilde{v}_5 \|_{L^{36}_{tx}}   \|P_{N_1} \tilde{w_1} \|_{L^{36}_{tx}}
   \|P_{N_2} \tilde{u}_2 \|_{L^{36}_{tx}} \\
             \lesssim &    \|w_1\|_{L^{\infty}_t H^{1/2}_x}
         \|   u_3\|_{L^\infty_t H^{s}\cap L^4_t H^{s}_4}\|   v_5 \|_{X^{1/2,1/2}}
        \|u_2 \|_{L^{\infty}_{t}H^{1/2}_x }    \|w_4 \|_{X^{-1/2-,1}}
          \end{align*}
{\bf 2.2.2.2}  Contribution of $ \tilde{w}_4 $.
 Since $ \max(|\sigma_i|) < 2^{-2} N N_5 $, we only have to consider
  $ \tilde{\tilde{u}}_3 $ in this contribution. Either $ N \sim N_3 $ and
 then
  \begin{align*}
   J^2 \lesssim  &\sum_{N_3} \Bigl( \sum_{(N_1,N_2,N_3,N_4,N_5)\in \Lambda_{N_3}}
   N_3^{s}   ( N_3 N_5)^{-1}  N_5^{2/3}  N_3^{1-s} \\
    & \| P_{N_3}  \tilde{\tilde{u}}_3  \|_{X^{s-1,1}}
   \|P_{N_5} D_x^{1/3}  \tilde{v}_5 \|_{L^{6}_{tx}}
      \|P_{N_2} \tilde{ u}_2 \|_{L^{36}_{tx}} \prod_{i=1,4} \|P_{N_i} \tilde{ w}_i \|_{L^{36}_{tx}}\Bigr)^2
   \\
      \lesssim &  \Bigl( \sum_{N_3} \|P_{N_3}  u_3 \|_{X^{s-1,1}} \Bigr)^2
     \|   v_5 \|_{X^{1/2,1/2}}^2   \|u_2 \|_{L^{\infty}_{t}H^{1/2}_x }^2  \prod_{i=1,4} \|w_i \|_{L^{\infty}_{t}H^{1/2}_x }^2
                   \end{align*}
or $ N_5\sim N_3 $.  In this last case we first notice that  $X^{0,3/8}  \hookrightarrow L^4_{tx} $ and
 that for any fixed $ 2<p<\infty $,   $  X^{0,1/2-}_T \hookrightarrow L^p_T L^2_{x} $. Therefore by interpolation, Sobolev inequalities and
  duality we infer that $ L^{6\over 5}_{Tx}  \hookrightarrow X^{-\frac{1}{6}-,-1/2+}_T  $. We thus get
  \begin{align*}
   J^2 \lesssim  &\sum_{N} \Bigl(  \sum_{N_3}  \sum_{(N\vee N_1\vee N_2\vee N_4)\lesssim N_3}
   N^{s} N^{\frac{1}{6}+} ( N N_3)^{-1}  N_3^{1/2}
  N_3^{1-s} \\
    & \| P_{N_3}  \tilde{\tilde{u}}_3  \|_{X^{s-1,1}}
   \|P_{N_3} D_x^{1/2} \tilde{v}_5 \|_{L^{4}_{tx}}
        \|P_{N_2}   \tilde{ u}_2 \|_{L^{36}_{tx}} \prod_{i=1,4}  \|P_{N_i}\tilde{w}_i \|_{L^{36}_{tx}}\Bigr)^2
   \\
      \lesssim & \sum_{N} N^{-\frac{1}{6}+}\Bigl(  \sum_{N_3} \|P_{N_3}  u_3 \|_{X^{s-1,1}}
       \|  P_{N_3}D_x^{1/2}\tilde{v}_5 \|_{L^4_{tx}} \Bigr)^2
       \|u_2 \|_{L^{\infty}_{t}H^{1/2}_x }^2  \prod_{i=1,4} \|w_i \|_{L^{\infty}_{t}H^{1/2}_x }^2\\
                 \lesssim &\|  u_3 \|_{X^{s-1,1}}^2
           \|   v_5 \|_{X^{1/2,1/2}}^2   \|u_2 \|_{L^{\infty}_{t}H^{1/2}_x }^2
        \prod_{i=1,4} \|w_i \|_{L^{\infty}_{t}H^{1/2}_x }^2
          \end{align*}
          where we apply Cauchy-Scwharz in $ N_3 $ in the last step.
                  \end{proof}
 Finally, this last proposition together with Lemmas \ref{lemma43} enables to treat   the term containing $B(u,u) $ in \eqref{est}.
     \begin{proposition} Let $ 1/2\le s\le 1 $,  $ w_1 \in X^{-1/2-,1}\cap L^\infty_t H^{s+1-}_x  $,  and $
u_i\in X^{s-1,1}\cap  \widetilde{L^4_t H^{s}_4} \cap L^\infty_t H^{s}$, $ i=2,3,4$.
 with compact support in time such that $ u_2 $ and $ u_3 $ are real-valued. Then it holds
\begin{align}
\Bigl\| w_1 u_2   B(u_3,u_4) \Bigr\|_{X^{s,-{1\over 2}+}} \lesssim&
\| w_1 \|_{X^{-1/2-,1}}  \|u_2\|_{L^\infty_t H^{1\over 2}} \|  u_3 \|_{ {L}^\infty_{t}H^{1\over 2}}  \|  u_4\|_{L^4_{t}H^s_4}  \nonumber  \\
& + \| w_1 \|_{L^\infty_t H^{{3\over 2} -}_x}  \sum_{(2\le i\neq
j\neq q\le 4)}
 \| u_i \|_{X^{-{1\over 2}-,1} \cap\widetilde{L^4_{t}H^{1\over 2}_4}\cap L^\infty_t H^{1\over 2}}
\nonumber\\
&\quad\cdot\|u_j\|_{X^{-{1\over 2},1}
\cap\widetilde{L^4_{t}H^{1\over 2}_4}\cap L^\infty_t H^{1\over 2}}
  \|u_q\|_{X^{s-1,1} \cap\widetilde{L^4_{t}H^{s}_4}\cap L^\infty_t H^{s}} \; .
\end{align}
\end{proposition}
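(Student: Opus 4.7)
The plan is to mirror very closely the dyadic decomposition and case analysis used in the proof of the preceding proposition for $N^\nu(u,v)$, exploiting the fundamental observation that $B(u_3,u_4)$ is effectively one full derivative smoother than the $\partial_x P_-(w_4 v_5)$ factor that appeared there: by Lemma \ref{lemma5} each of its two inputs contributes only $D_x^{1/2}$ in $L^4_{tx}$, and moreover $B(P_{N_3}u_3,P_{N_4}u_4)$ is supported on same-sign frequencies of size $\sim N_4$ when $N_3\le N_4$, with a symbol of size $\sim \min(N_3,N_4)$. Decomposing all four factors dyadically, and the output at the dyadic frequency $N$, we obtain
\[
 \| P_N(w_1u_2 B(u_3,u_4))\|_{X^{s,-\frac12+}}\lesssim \sum_{N_i}\|P_N\bigl(P_{N_1}w_1\cdot P_{N_2}u_2\cdot B(P_{N_3}u_3,P_{N_4}u_4)\bigr)\|_{X^{s,-\frac12+}},
\]
and by the symmetry of $B$ we may restrict to $N_3\le N_4$. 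The frequency projections then force $N\lesssim \max(N_1,N_2,N_4)$.

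\textbf{Case analysis.} I would distinguish three regimes according to which of $N_1$, $N_2$, $N_4$ is dominant (hence $\sim N$). When $N_1$ is maximal, the high regularity $w_1\in L^\infty_t H^{s+1-}_x$ permits Bernstein bounds like $\|P_{N_1}w_1\|_{L^\infty_t L^\infty_{x-}}\lesssim N_1^{-\frac12+}\|w_1\|_{L^\infty_t H^{s+1-}}$, and combined with the dual Strichartz embedding $L^{4/3}_{tx}\hookrightarrow X^{0,-1/2+}$, Hölder, and Lemma \ref{lemma5}, we put the other factors in $L^4_{tx}$ or $L^8_{tx}$, absorbed by the $L^\infty_t H^{1/2}$ or $\widetilde L^4_t H^{1/2}_4$ norms. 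When $N_2$ is maximal, an analogous argument applies, with $u_2$ playing the role of the high-frequency factor and its $X^{-1/2-,1}$ norm or $L^\infty_t H^{1/2}$ norm being paid at the appropriate moment. The delicate regime is when $N_4$ is maximal, because one must bridge the derivative gap between the $L^2$-type estimate of $B$ and the $H^s$ output norm while summing the dyadic series.

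\textbf{Main obstacle.} The hard case is $N_4\sim N\gg \max(N_1,N_2,N_3)$, in which the crude Hölder bound loses a power $N_4^{s-1/2}$ that is not directly summable. This is the analog of subcase 2.2.2 of the preceding lemma, and I would treat it by the same machinery: split each of $w_1,u_2,u_3,u_4$ into its low- and high-modulation parts $\tilde z,\tilde{\tilde z}$ at the threshold $\sim N N_4$ (or $\sim N_3N_4$, coming from the internal resonance of $B$). Since $N_i^2\ll N N_4$ for the three low-frequency factors, the Marcinkiewicz multiplier lemma \eqref{mikhlin} established in the proof of the previous proposition preserves the Lebesgue norms of their low-modulation parts, which are then controlled via Bourgain's $L^4_{tx}$ Strichartz estimate \eqref{str} paired with $L^{4/3}_{tx}\hookrightarrow X^{0,-1/2+}$. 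The high-modulation parts pay directly against the $X^{-1/2-,1}$ or $X^{s-1,1}$ norms, producing a summable gain $N^{-1/2+}$ plus a factor $N_4^{s-1}$ that compensates the original loss. A final Cauchy--Schwarz in $N_4$ (exactly as at the end of the preceding proof) closes the summation and yields the stated bound.
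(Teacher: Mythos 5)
Your proposal follows essentially the same route as the paper: dyadic decomposition with $N_3\le N_4$ (so $N_{34}\sim N_4$), a case analysis on which frequency dominates, the duality $L^{4/3}_{tx}\hookrightarrow X^{0,-1/2+}$ combined with H\"older/Strichartz and Lemma \ref{lemma5} in the non-resonant cases, and in the hard case $N\sim N_4\gg N_1\vee N_2\vee N_3$ a low/high modulation splitting plus the Marcinkiewicz multiplier lemma for the low-modulation pieces, the high-modulation piece paying against $X^{-1/2-,1}$ or $X^{s-1,1}$, closed by Cauchy--Schwarz in $N_4$. The one point to pin down is the splitting threshold: the resonance relation (forced by the two $P_+$ projections inside $B$ together with $N_1\vee N_2\ll N_3$) gives $|\sigma_{\max}|\gtrsim N_3N_4$, not $NN_4\sim N_4^2$, so the modulation cut must be taken at $\sim N_3N_4$ --- precisely your parenthetical alternative.
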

\begin{proof}
Recall that $ B(u,v) = -i\partial_x^{-1} ( P_+ u_x P_+ v_x) + i\partial_x^{-1} ( P_- u_x P_- v_x) $. By symmetry it thus suffices to estimate
\begin{align*}
I:=&\Bigl\|  w_1 u_2 \partial_x^{-1} (P_+ \partial_x u_3 P_+ \partial_x u_4)\Bigr\|_{X^{s,-{1\over 2}+}}\\
  =& \Bigl\| \sum_{N\ge 1 ,N_1,N_2,N_3,N_4, N_{34}\ge (N_3\vee
N_4)/2}  P_N \Bigl( P_{N_1}w_1 P_{N_2} u_2
\\
&\qquad \cdot P_{N_{34}}\partial_x^{-1} (P_+ \partial_x P_{N_3} u_3
P_+
    \partial_x P_{N_4}u_4) \Bigr)   \Bigr\|_{X^{s,-{1\over 2}+}} \; .
\end{align*}
     By symmetry we can assume that $ N_3\le N_4 $ and thus we must have $ N_{34} \sim N_4 $. We can thus drop the summation over $ N_{34}$ and replace $ P_{N_{34}}$ by $ \tilde{P}_{N_4} $.

     By the triangle inequality we can separate this sum in different sums on disjoint subset of $ (2^{\N})^5 $. \hspace*{2mm} \\
        {\bf 1.}    $N_1 \ge 2^{-8} N $. Then we  have
\begin{align*}
   I \lesssim  &  \sum_{N} \sum_{N_1\ge 2^{-8}N}   \sum_{N_2,N_3,N_4} N^{s}\Bigl\|  P_N \Bigl( P_{N_1}w_1 P_{N_2} u_2\tilde{P}_{N_{4}}\partial_x^{-1} (P_+ \partial_x  P_{N_3} u_3 P_+
    \partial_x P_{N_4}u_4)  \Bigr)
     \Bigr\|_{L^{4/3}_{tx}}  \\
     \lesssim &  \sum_{N \le 2^{8}N_1}   \sum_{N_2,N_3,N_4}  \|P_{N_1} w_1 \|_{L^\infty_t H^{s+}_8}Ê\|P_{N_2} u_2 \|_{L^\infty_t L^8_x}?   \prod_{i=3}^4   \|P_{N_i} u_i \|_{L^4_t H^{{1\over 2 }-}_4}?    \\
           \lesssim & \| w_1 \|_{L^\infty_t H^{{3\over 2}-}_x}   \|  u_2 \|_{L^\infty_t H^{1\over 2}_x}  \prod_{i=3}^4   \|P_{N_i} u_i \|_{L^4_t H^{{1\over 2 }}_4}? \; .
\end{align*}
     {\bf 2.}    $N_1< 2^{-8} N $ and $ N_1 \ge 2^{-5} N_3 $. Then either $  N_4\gtrsim N\vee N_2 $ and it holds
       \begin{align*}
   I  \lesssim  &  \Bigl[\sum_{N} \Bigl(\sum_{N_4\gtrsim N} \sum_{N_1< 2^{-8}N} \sum_{N_3\le 2^5 N_1}   \sum_{N_2} N^{s}\Bigl\|  P_N \Bigl( P_{N_1}w_1 P_{N_2} u_2\\
   &\qquad\cdot\tilde{P}_{N_{4}}\partial_x^{-1} (P_+ \partial_x  P_{N_3} u_3 P_+
    \partial_x P_{N_4}u_4)  \Bigr)
     \Bigr\|_{L^{4/3}_{tx}} \Bigr)^2  \Bigr]^{1\over 2}\\
     \lesssim &  \Bigl(\sum_{N} \Bigl(  \sum_{N_4\gtrsim N }  ({N\over N_4})^{2s} \|J_x^s u_4\|_{L^4_{tx}}^2 \Bigr)^{1\over 2}  \sum_{N_1,N_2,N_3}     \|P_{N_1} w_1 \|_{L^\infty_t H^{1}_4}Ê\|P_{N_2} u_2 \|_{L^\infty_t L^8_x}ÊÊ\|P_{N_2} u_3  \|_{L^\infty_t L^8_x}?  \\
              \lesssim & \| w_1 \|_{L^\infty_t H^{{3\over 2} -}_x}   \|  u_2 \|_{L^\infty_t H^{1\over 2}_x}
               \|  u_3 \|_{L^\infty_t H^{1\over 2}_x}  \| u_4 \|_{\widetilde{L^4_{t}H^s_4}}\; .
    \end{align*}
     or $ N_2 \gtrsim N\vee N_4 $ and it holds
      \begin{align*}
   I  \lesssim  &  \Bigl[\sum_{N} \Bigl(\sum_{N_2 \gtrsim N} \sum_{N_1< 2^{-8}N} \sum_{N_3\le 2^5 N_1}   \sum_{N_4} N^{s}\Bigl\|  P_N \Bigl( P_{N_1}w_1 P_{N_2} u_2\\
   &\quad \cdot\tilde{P}_{N_{4}}\partial_x^{-1} (P_+ \partial_x  P_{N_3} u_3 P_+
    \partial_x P_{N_4}u_4)  \Bigr)
     \Bigr\|_{L^{4/3}_{tx}} \Bigr)^2  \Bigr]^{1\over 2}\\
     \lesssim &\Bigl(\sum_{N} \Bigl(  \sum_{N_2\gtrsim N }  ({N\over N_2})^{2s} \|J_x^s u_2\|_{L^4_{tx}}^2 \Bigr)^{1\over 2}
      \sum_{N,N_3,N_4}     \|P_{N_1} w_1 \|_{L^\infty_t H^{1}_4}?     \prod_{i=3}^4 \|P_{N_i} v_i \|_{L^\infty_t L^8_x}? \\
              \lesssim & \| w_1 \|_{L^\infty_t H^{{3\over 2} -}_x}   \|  u_2 \|_{\widetilde{L^4_{t}H^s_4}}
              \prod_{i=3}^4    \|  u_i \|_{L^\infty_t H^{1\over 2}_x}  \; .
    \end{align*}
    {\bf 3.} $N_1< (2^{-8} N \wedge  2^{-5} N_3) $. \\
       {\bf 3.1}    $N_2 \ge 2^{-8} N $. Then we  have
          \begin{align*}
   I  \lesssim  &\Bigl( \sum_{N} \Bigl[  \sum_{N_2\ge 2^{-8}N}   \sum_{N_3,N_4} \sum_{N_1} N^{s}\Bigl\|  P_N \Bigl( P_{N_1}w_1
    P_{N_2}u_2 \\
    &\quad\cdot\tilde{P}_{N_{3}}\partial_x^{-1} (P_+ \partial_x  P_{N_3} u_3 P_+
    \partial_x P_{N_4}u_4)  \Bigr)
     \Bigr\|_{L^{4/3}_{tx}}  \Bigr]^2 \Bigl)^{1/2} \\
     \lesssim &\|w_1\|_{L^\infty_t H^1_x}  \Bigl( \sum_{N} \Bigl[  \sum_{N_2\ge 2^{-8}N} (\frac{N}{N_2})^{s} N_2^s \|P_{N_2}u_2\|_{L^4_{tx}} \Bigr]^2 \Bigr)^{1/2}
     \\
     &\cdot\sum_{N_4} N_4^{-1}\|\tilde{P}_{N_{4}}(P_+ \partial_x  P_{N_3} u_3 P_+
    \partial_x P_{N_4}u_4)\|_{L^2_{tx}} \\
    \lesssim & \|w_1\|_{L^\infty_t H^1_x}   \|  u_2 \|_{\widetilde{L^4_{t}H^s_4}}   \sum_{N_4}  \|P_{N_4} D_x^{1/2} u_4 \|_{L^4_{tx}} \sum_{N_3\le N_4} (\frac{N_3}{N_4})^{1/2}
     \|P_{N_3} D_x^{1/2} u_3 \|_{L^4_{tx}}  \\
      \lesssim & \|w_1\|_{L^\infty_t H^1_x} \| u_2 \|_{\widetilde{L^4_{t}H^s_4}}   \| D_x^{1/2} u_3 \|_{ \tilde{L}^4_{tx}}  \| D_x^{1/2} u_4\|_{ \tilde{L}^4_{tx}} \; .
    \end{align*}
    where we use two times the discret Young inequality. \\
     {\bf 3.2}    $N_2< 2^{-8} N $ and $ N_2 \ge 2^{-5} N_3 $.  Then we must have $  N\sim N_4$ and thus
        \begin{align*}
   I  \lesssim  &\Bigl( \sum_{N_4} \Bigl[  \sum_{N_1,N_3}\sum_{N_2\ge 2^{-5}N_3} N_2^{-1/2}N_3^{1/2}
   \|P_{N_1} w_1 \|_{L^\infty_{tx}} \|P_{N_2}J_x^{1/2}u_2 \|_{L^4_{tx}} \\
   &\quad\cdot\|P_{N_3}D_x^{1/2} u_3\|_{L^4_{tx}} \|P_{N_4} D_x^{s}u_4 \|_{L^4_{tx}}\Bigr]^2 \Bigr)^{1/2} \\
    \lesssim  &  \|w_1\|_{L^\infty_t H^1_x}    \Bigl( \sum_{N_4} \|D^s_x P_{N_4} u_4 \|_{L^4_{tx}}^2 \Bigr)^{1/2}
     \\
     &\cdot\sum_{N_2}  \|P_{N_2} J_x^{1/2} u_2 \|_{L^4_{tx}} \sum_{N_3\le 2^{5}N_2} (\frac{N_3}{N_2})^{1/2}
     \|P_{N_3} D_x^{1/2} u_3 \|_{L^4_{tx}}  \\
      \lesssim & \|w_1\|_{L^\infty_t H^1_x}   \|  u_2 \|_{\widetilde{L^4_{t}H^{1/2}_4}}   \|u_3 \|_{\widetilde{L^4_{t}H^{1/2}_4}}  \| u_4\|_{\widetilde{L^4_{t}H^s_4}} \; .
    \end{align*}
     {\bf 3.3} $N_2< (2^{-8} N \wedge  2^{-5} N_3) $. Then $ N\sim N_4 $ and
      the   resonance relation yields
      \begin{equation}
       |\sigma_{max}|\gtrsim |\xi_3\xi_4 | \ge 2^{-2} N_3 N_4 \; .
       \end{equation}
First we can easily treat the contribution of the region $ \{(\tau,\xi),\, \langle \tau- \xi |\xi| \rangle \ge 2^{-2} N_3 N_4 \} $.  Indeed, we then get
     \begin{align*}
   I  \lesssim  & \sum_{N_4}  \sum_{N_3\lesssim N_4} \sum_{N_1\vee N_2 \lesssim N_3}N_3^{1/2}(N_3 N_4)^{-1/2+}
    \|w_1\|_{L^\infty_t H^1_x}  \|P_{N_2}  u_2 \|_{L^\infty_{tx}} \\
    &\quad \cdot\|P_{N_3}D_x^{1/2} u_3\|_{L^4_{tx}}  \|P_{N_4} J_x^{s}u_4 \|_{L^4_{tx}} \\
   \lesssim &  \|w_1\|_{L^\infty_t H^1_x} \| D_x^{1/2} u_3 \|_{ {L}^4_{tx}}  \|  u_4\|_{{L^4_{t}H^s_4}}
   \sum_{N_2} N_2^{-1/2+}\|P_{N_2}  u_2 \|_{L^\infty_{tx}} \\
    \lesssim  &   \|w_1\|_{L^\infty_t H^1_x} \| u_2\|_{ {L}^\infty_{t}H^{1/2}_x}   \|  u_3 \|_{ L^4_t H^{1\over 2}_4}  \| u_4\|_{{L^4_{t}H^s_4}}  \; .
    \end{align*}
which is acceptable. Therefore in the sequel we can assume that $  \langle \tau- \xi |\xi| \rangle<
 2^{-2} N_3 N_4 \} $.  We now split $ v_1 $, $u_2 $  and $ u_3 $   into two parts related to the value of $ \sigma_i $ by setting
   $$
  z= {\mathcal F}^{-1} \Bigl( \eta_{ 2^{-4}N_3 N_4}(\tau-\xi |\xi|)  \widehat{z}\Bigr)
   +{\mathcal F}^{-1} \Bigl((1-\eta_{2^{-4}N_3 N_4}(\tau-\xi|\xi|) ) \widehat{z}\Bigr) :=
   \tilde{z} +\tilde{\tilde{z}} \; .
   $$
   It is worth noticing that in this region $ N_i^2<< N_3 N_4 $ for $ i=1,2,3$. Therefore Lemma \ref{mikhlin} holds for   $\tilde{w}_1,\, \tilde{u}_2 $
    and    $  \tilde{u}_3 $.\\
     { \bf 3.3.1.} Contribution of  $ \tilde{\tilde{w}}_1 $.  We first control  the contribution of $ \tilde{\tilde{w}}_1 $ to $ I $  in the following way :
  \begin{align*}
   I \lesssim  &   \sum_{N_4} \sum_{N_3\lesssim N_4} \sum_{N_1\vee N_2 \lesssim N_3} N_1^{1/2+}N_3 (N_3 N_4)^{-1} \|P_{N_1}  \tilde{\tilde{w}}_1 \|_{X^{-1/2-,1}}  \\
   &\quad\cdot\|P_{N_2}  u_2 \|_{L^\infty_{tx}}  \|P_{N_3} u_3\|_{L^\infty_{tx}} \|P_{N_4} J_x^{s}u_4 \|_{L^4_{tx}} \\
    \lesssim  &  \| w_1 \|_{X^{-1/2-,1}}  \|u_2\|_{L^\infty_t H^{1\over 2}} \|  u_3 \|_{ {L}^\infty_{t}H^{1\over 2}}  \|  u_4\|_{L^4_{t}H^s_4}
 \; .
    \end{align*}
    \noindent
   {\bf 3.3.2.}  Contribution of  $ \tilde{w}_1 $.\\
  { \bf 3.3.2.1} Contribution of  $ \tilde{\tilde{u}}_2 $. In the same way, using Sobolev inequality, we have
  \begin{align*}
   I \lesssim  &   \sum_{N_4} \sum_{N_3\lesssim N_4} \sum_{N_1\vee N_2 \lesssim N_3} N_1^{1/2+}N_3 (N_3 N_4)^{-1}N_4^{1/6}
  \|P_{N_1}  \tilde{w}_1 \|_{L^{12}_{tx}}   \|P_{N_2}  \tilde{\tilde{u}}_2 \|_{X^{-1/2-,1}} \\
  & \quad \quad   \|P_{N_3} u_3\|_{L^{\infty}_{tx}} \|P_{N_4} J_x^{s-1/6}u_4 \|_{L^6_{tx}} \\
    \lesssim  &  \|w_1\|_{L^\infty_t H^{1\over 2}} \|u_2\|_{X^{-1/2-,1}}  \|  u_3 \|_{L^\infty_t H^{1\over 2}}  \|  u_4\|_{L^\infty_t H^s \cap L^4_{t}H^s_4}
 \; .
    \end{align*}
    {\bf 3.3.2.2} Contribution of $\tilde{u}_2 $.  \\
          {\bf 3.2.2.2.1} Contribution of $\tilde{\tilde{u}}_3 $.
 \begin{align*}
   I \lesssim  &   \sum_{N_4} \sum_{N_3\lesssim N_4} \sum_{N_1\vee N_2 \lesssim N_3}  N_3^{3/2}(N_3 N_4)^{-1}N_4^{1/6}
    \|P_{N_1}  \tilde{w}_1 \|_{L^{24}_{tx}}   \\
    &\quad\cdot\|P_{N_1} \tilde{u}_2 \|_{L^{24}_{tx}}
    \|P_{N_3} \tilde{u}_3\|_{X^{-1/2,1}} \|P_{N_4} J_x^{s-1/6}u_4 \|_{L^6_{tx}} \\
    \lesssim  &  \|w_1\|_{L^\infty_t H^{1\over 2}}  \|u_2\|_{L^\infty_t H^{1\over 2}}    \|  u_3 \|_{X^{-1/2,1}}
  \|  u_4\|_{L^\infty_t H^s \cap L^4_{t}H^s_4}   \; .
    \end{align*}
      {\bf 3.2.2.2.2} Contribution of $\tilde{u}_3 $. Since $ \max(|\sigma_i|) \ge 2^{-2} N_3 N_4 $, it remains to
 treat the subcontribution of  $ \tilde{\tilde{u}}_4 $. We  easily obtain
 \begin{align*}
   I^2 \lesssim  & \sum_{N_4} \Bigl(  \sum_{N_3\lesssim N_4} \sum_{N_1\vee N_2\lesssim N_3}N_3^{2/3}N_4 (N_3 N_4)^{-1}
 \|P_{N_1}  \tilde{w}_1 \|_{L^{24}_{tx}}   \\
 &\quad\cdot\|P_{N_1} \tilde{u}_2 \|_{L^{24}_{tx}} \|P_{N_3} D_x^{1/3} P_+ \tilde{u}_3\|_{L^6_{tx}}  \|P_{N_4}  \tilde{\tilde{u}}_4\|_{X^{s-1,1}}\Bigr)^2  \\
       \lesssim  & \Bigl(   \|w_1\|_{L^\infty_t H^{1\over 2}}  \|u_2\|_{L^\infty_t H^{1\over 2}}
  \|  u_3\|_{L^\infty_t H^{1\over 2} \cap L^4_{t}H^{1\over 2}_4}   \|  u_4 \|_{X^{s-1,1}} \Bigr)^2 \; .
    \end{align*}
Therefore, we complete the proof.
\end{proof}

\subsection*{Acknowledgment}
Z. Guo is supported in part by NNSF of China (No. 11001003, No.
11271023).

\end{document}